\newtheorem{theorem}{Theorem}[section]
\newtheorem{lemma}[theorem]{Lemma}
\newtheorem{corollary}[theorem]{Corollary}
\newtheorem{proposition}[theorem]{Proposition}
\theoremstyle{definition}
\newtheorem{definition}[theorem]{Definition}
\newtheorem{example}[theorem]{Example}
\newtheorem{remark}[theorem]{Remark}
\author{Matteo Fiacchi}
\def\A{{\mathcal{A}}}
\def\D{{\mathbb{D}}}
\def\C{{\mathbb{C}}}
\def\R{{\mathbb{R}}}
\def\N{{\mathbb{N}}}
\def\L{{\mathscr{L}}}
\newcommand{\id}{{\sf Id}}
\newcommand{\re}{{\sf Re}}
\begin{document}
\title{Gromov hyperbolicity of pseudoconvex finite type domains in $\C^2$} 

\thanks{ Partially supported by {\sl PRIN Real and Complex Manifolds: Topology,
	Geometry and holomorphic dynamics} n. 2017JZ2SW5 and by MIUR
	Excellence Department Project awarded to the Department of
	Mathematics, University of Rome Tor Vergata, CUP E83C18000100006.}

\address{M. Fiacchi: Dipartimento di Matematica\\
Universit\`{a} di Roma \textquotedblleft Tor Vergata\textquotedblright\ \\Via Della Ricerca Scientifica 1, 00133 \\
Roma, Italy} \email{fiacchi@mat.uniroma2.it}
\begin{abstract}
We prove that  every bounded smooth domain of finite D'Angelo type in $\C^2$ endowed with the Kobayashi distance is Gromov hyperbolic and its  Gromov boundary is canonically homeomorphic to the Euclidean boundary.  We also show that any domain in $\C^2$ endowed with the Kobayashi distance is Gromov hyperbolic provided there exists a sequence of automorphisms that converges to a smooth boundary point of finite D'Angelo type.
\end{abstract}
\maketitle

\section{Introduction}

The Kobayashi metric is an important invariant metric that can be used to study domains in $\C^n$ and the holomorphic functions defined on them. In this paper we study the relation between the $CR$ structure of the boundary and the Gromov hyperbolicity of a domain endowed with the Kobayashi distance. Gromov hyperbolic spaces are a generalization of negative curved spaces in the context of metric spaces, and this concept fits very well to our problem since the Kobayashi metric is in general not Riemannian. 

The problem has been much studied in recent years, for instance Balogh and Bonk \cite{BB} proved that bounded strongly pseudoconvex domains endowed with the Kobayashi distance are Gromov hyperbolic. In different works, Gaussier-Seshadri (in the bounded smooth case \cite{GS}), Nikolov-Thomas-Trybula ($\mathcal{C}^{1,1}$ boundary in $\C^2$ \cite{NTT}) and Zimmer (in the general case \cite{Zim1}) proved that,  in  convex domains the existence of  holomorphic disks in the boundary prevents the domains from being Gromov hyperbolic with respect to the Kobayashi distance. Moreover, in the same paper, Zimmer gave the following complete characterization: let $D\subset\C^d$ be a smooth {\sl convex} domain. Then, $D$ endowed with the Kobayashi distance is Gromov hyperbolic if and only if  $D$ is of D'Angelo finite type. 

If $M$ is a smooth $CR$ hypersurface in $\C^d$ that can be written as $M=\{\rho=0\}$ where $\rho$ is a $\mathcal{C}^\infty$ function, a point $p\in M$ is of \textit{D'Angelo type} $m$ if 
$$m=\sup\left\{\frac{\nu(\rho\circ f)}{\nu(f)}:f:\D\longrightarrow\C^d\ \ \mbox{holomorphic,}\ \ f(0)=p\right\}$$
where $\nu(\cdot)$ indicates the order of vanishing, and $M$ is of \textit{finite type} is every point has finite D'Angelo type. A point in a $CR$ hypersurface is strongly pseudoconvex if and only if its D'Angelo type is 2, so smooth pseudoconvex domains with finite type boundary (\textit{finite type domains}, for short) can be seen as a generalization of strongly pseudoconvex domains. 

Our goal is to study the Gromov hyperbolicity (with respect to the Kobayashi distance) of finite type domains, with no assumption of convexity. Exiting the category of convex domains, the problem of deciding the Gromov hyperbolicity of a domain becomes very subtle. For instance,  Zimmer \cite{Zim2} found an example of a Gromov hyperbolic, non-convex and non-smooth, domain  with an analytic disk in the boundary. The problem of determining whether a domain endowed with the Kobayashi distance is Gromov hyperbolic is pretty much related to the existence of suitable estimates for the Kobayashi distance near the boundary. Such estimates are known for  convex and strongly pseudoconvex domains, but there are very few outside these categories. In the present paper, we exploit Catlin estimates \cite{Cat}, which hold in dimension 2. Our main result is the following (here $d^K_\Omega$ denotes the Kobayashi distance of a domain $\Omega\subset\C^d$):

\begin{theorem}\label{mainth}
Let $\Omega\subset\C^2$ be a bounded smooth pseudoconvex finite type domain. Then $(\Omega, d^K_\Omega)$ is Gromov hyperbolic. Moreover, the identity map $\Omega\longrightarrow\Omega$
extends to a homeomorphism $\overline{\Omega}^G\longrightarrow\overline{\Omega}$, where $\overline{\Omega}^G$ denotes the Gromov compactification of $\Omega$ endowed with the Gromov topology and $\overline{\Omega}$ is the Euclidean closure of $\Omega$.
\end{theorem}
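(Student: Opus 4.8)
The plan is to adapt the method of Balogh and Bonk \cite{BB} for strictly pseudoconvex domains, replacing the Kor\'anyi geometry coming from the Levi form by the anisotropic geometry encoded in Catlin's estimates \cite{Cat}. Fix a base point $o\in\Omega$ and write $\delta(z)=\mathrm{dist}(z,\partial\Omega)$. Near $\partial\Omega$ one has Catlin's special coordinates and the scale $\tau(p,\varepsilon)$, the largest complex--tangential displacement at $p$ that keeps one $\varepsilon$--close to $\partial\Omega$, together with Catlin's two--sided estimate for the Kobayashi--Royden infinitesimal metric $F^K_\Omega$, namely $F^K_\Omega(p;X)\asymp |X_\nu|/\delta(p)+|X_\tau|/\tau(p,\delta(p))$ with uniform constants, where $X=X_\nu+X_\tau$ is the splitting into normal and complex--tangential parts. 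Since Gromov hyperbolicity and the Gromov boundary are large--scale invariants insensitive to the behaviour of $d^K_\Omega$ on compact subsets, this region near $\partial\Omega$ is the only one that matters.

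\emph{Step 1: a two--sided distance estimate.} From the metric estimate above I would deduce that $|d^K_\Omega(z,w)-g_\Omega(z,w)|\le C$ for some constant $C$ and an explicit function of the form $g_\Omega(z,w)=\log\!\left(\rho(z,w)/\sqrt{\delta(z)\,\delta(w)}\right)$, where $\rho(z,w)$ is comparable to the anisotropic ``diameter'', measured with $\tau$, of the smallest Catlin box containing $z$ and $w$ (so that $\rho\gtrsim\sqrt{\delta(z)\delta(w)}$ and $g_\Omega\gtrsim 0$). The upper bound follows by integrating $F^K_\Omega$ along a curve that pushes $z$ normally out to the optimal scale, travels complex--tangentially above $w$, and descends again, optimizing over the scale. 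The lower bound is the delicate step: as in \cite{BB}, one shows that $z\mapsto\log(1/\delta(z))$ is Lipschitz for $d^K_\Omega$ up to a bounded error (its differential is dominated by $F^K_\Omega$ since $|X_\nu|/\delta(p)\lesssim F^K_\Omega(p;X)$), which bounds the ``vertical'' part of $g_\Omega$ from below, and one then needs a companion horizontal potential adapted to $\tau$ to bound the complex--tangential part, and combines them.

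\emph{Step 2: the model is hyperbolic.} Next I would check, by a computation that no longer refers to $\Omega$, that $g_\Omega$ satisfies the four--point (Gromov) inequality based at $o$ up to a uniform additive constant. This uses only Catlin's structural properties of $\tau$ --- its monotonicity in $\varepsilon$ and the submultiplicativity/doubling inequalities comparing $\tau$ at different scales and base points --- together with the quasi--triangle inequality for the anisotropic quasidistance $\rho$. By Step 1 the Gromov products of $d^K_\Omega$ based at $o$ differ from those of $g_\Omega$ by a bounded amount, hence still satisfy a $\delta$--inequality; since $(\Omega,d^K_\Omega)$ is a complete Kobayashi hyperbolic bounded domain, it is a complete, locally compact length space, so by Hopf--Rinow it is a proper geodesic metric space, and therefore Gromov hyperbolic.

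\emph{Step 3: identification of the boundary.} Finally, using Step 1, a sequence $(z_k)\subset\Omega$ is a Gromov sequence for $d^K_\Omega$ precisely when $\delta(z_k)\to 0$, i.e.\ when $z_k\to\partial\Omega$ in the Euclidean sense, and two such sequences represent the same point of $\overline{\Omega}^G$ precisely when their Gromov product based at $o$ tends to $+\infty$, which by the shape of $g_\Omega$ happens exactly when they have the same Euclidean limit. This yields a bijection between the Gromov boundary and $\partial\Omega$; the same estimates show it and its inverse are continuous, and as both spaces are compact Hausdorff it is a homeomorphism, which glues with $\mathrm{id}_\Omega$ to the asserted homeomorphism $\overline{\Omega}^G\to\overline{\Omega}$. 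The main obstacle throughout is the lower bound in Step 1: since $\tau(p,\varepsilon)$ admits no closed form but only Catlin's inequalities, building the horizontal potential and bounding how far a near--geodesic can wander in the complex--tangential direction must be carried out with those inequalities alone, and this is where the analytic work concentrates.
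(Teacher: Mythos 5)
Your route is genuinely different from the paper's. The paper never constructs an explicit comparison function $g_\Omega$ for $d^K_\Omega$: it argues by contradiction, applying the Pinchuk scaling method to a would-be fat geodesic triangle so that the domain converges to a model $\Omega_P=\{\re[w]+P(z)<0\}$, proves Gromov hyperbolicity of the model domains for the Catlin \emph{distance}, and combines this with the visibility theorem of \cite{BZ} and a careful analysis of limits of (quasi-)geodesics under scaling; the boundary homeomorphism is then read off from the behaviour of geodesic rays rather than from a distance formula. Your Balogh--Bonk scheme would, if completed, give strictly more (an explicit rough-isometric model of $(\Omega,d^K_\Omega)$ and quantitative control of the hyperbolicity constant), but as written it has a genuine gap.

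The gap is the lower bound in Step 1, which you correctly identify as the crux but do not close. Catlin's theorem supplies only the \emph{infinitesimal} two-sided estimate $K_\Omega(p,\cdot)\asymp M_r(p,\cdot)$ near a finite type boundary point; integrating it along a well-chosen curve gives the upper bound $d^K_\Omega\le g_\Omega+C$, but the lower bound $d^K_\Omega\ge g_\Omega-C$ requires controlling \emph{arbitrary} competing curves. The vertical potential $-\log\delta$ works because $|X_\nu|/\delta\lesssim M_r$, but the companion horizontal potential would have to be, for each boundary point $\eta$, a function whose differential is dominated by $M_r$ and which grows like the logarithm of the Nagel--Stein--Wainger type pseudodistance to $\eta$. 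Since $\tau(p,\varepsilon)$ is defined only up to constants, is not smooth, and is genuinely non-homogeneous (its scaling exponent changes between boundary strata of different type), no such potential is available off the shelf; constructing one amounts to proving a new global two-sided distance estimate that is not in the literature for finite type domains in $\C^2$ and is essentially equivalent in difficulty to the theorem itself. Without it, Step 2 (the four-point inequality for $g_\Omega$, which additionally needs $\rho$ to be an honest quasi-metric with the engulfing/doubling property of Catlin boxes at comparable scales and base points) and Step 3 (the characterization of Gromov sequences) have nothing to stand on. So the proposal is a plausible programme, not a proof: all of the analytic work is deferred to exactly the step where the paper instead switches to rescaling and visibility.
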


This theorem can be seen as a partial generalization (in $\C^2$)  of Zimmer's analogous result for convex finite type domains.

Following the ideas of Benoist in the context of Hilbert geometry \cite{Benoist}, later adapted by Zimmer in complex geometry, the strategy to prove the Gromov hyperbolicity is to use a scaling process.
In this paper, we use the Pinchuk scaling method: given a sequence of points $\{p_n\}_{n\in\N}$ in the domain $\Omega$ that converges to a finite type point in the boundary, we can find a family of automorphisms of $\C^2$ that maps $\Omega$ to a family of domains $\Omega_n$ that converges (in some sense) to a model domain $\Omega_\infty$ and sends $\{p_n\}_{n\in\N}$ to a convergent sequence in $\Omega_\infty$. The limit domain $\Omega_\infty$ is of the form
$$\{(z,w)\in\C^2: \re[w]+P(z)<0 \} $$
where $P:\C\longrightarrow\R$ is a subharmonic polynomial without harmonic terms and $P(0)=0$.
Then, the main issue is to show  that the Kobayashi distance is stable under this process. This is true in the convex case (see again \cite{Zim1}) but still unknown in general. In our setting, we are able to show that this is the case exploiting Catlin's estimates.

On the way to prove Theorem \ref{mainth}, we obtain  the following result, interesting by its own:

\begin{theorem}\label{ncautoth}
Let $\Omega\subset\C^2$ be a pseudoconvex domain. Suppose that there exists $\xi\in\partial\Omega$ such that $\partial\Omega$ is smooth and of finite type in a neighborhood of $\xi$ and there exists a sequence of automorphisms $\{\phi_k\}_{k\in\N}$ and a point $p\in\Omega$ such that $\phi_k(p)\rightarrow\xi$. Then $\Omega$ is complete Kobayashi hyperbolic and $(\Omega,d^K_\Omega)$ is Gromov hyperbolic.
\end{theorem}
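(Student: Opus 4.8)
The plan is to show that $\Omega$ is biholomorphic to one of the model domains, for which complete Kobayashi hyperbolicity and Gromov hyperbolicity are already available, and then transfer the conclusion using the fact that biholomorphisms are isometries for the Kobayashi distance. After passing to a subsequence, assume $q_k:=\phi_k(p)\to\xi$. First I would run Pinchuk's scaling construction at the smooth finite type point $\xi$, exactly as in the proof of Theorem~\ref{mainth}: there are a neighbourhood $U$ of $\xi$ and polynomial automorphisms $\sigma_k$ of $\C^2$ such that the domains $\Omega_k:=\sigma_k(\Omega\cap U)$ converge in the local Hausdorff sense to a model domain $\Omega_\infty=\{(z,w)\in\C^2:\re w+P(z)<0\}$, where $P$ is a subharmonic polynomial with no harmonic terms, $P(0)=0$, of degree at most the type of $\xi$; since $\xi$ has finite type and $\partial\Omega$ is pseudoconvex near $\xi$, $P$ is not harmonic, so (by the properties of such models established in the paper via Catlin's estimates, on the way to Theorem~\ref{mainth}) $\Omega_\infty$ is complete Kobayashi hyperbolic and $(\Omega_\infty,d^K_{\Omega_\infty})$ is Gromov hyperbolic. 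Normalising, we may also assume $\sigma_k(q_k)\to q_\infty\in\Omega_\infty$.

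The heart of the proof is to show that, up to a further subsequence, the scaled automorphisms $F_k:=\sigma_k\circ\phi_k$ converge uniformly on compact subsets of $\Omega$ to a biholomorphism $F:\Omega\to\Omega_\infty$ with $F(p)=q_\infty$. I would proceed in three steps. \emph{(i)} Since $\xi$ is a smooth finite type pseudoconvex boundary point it is a local holomorphic peak point; by the resulting attraction property, together with the identity $d^K_\Omega(\phi_k(x),q_k)=d^K_\Omega(x,p)$, one gets $\phi_k(K)\to\{\xi\}$ for every compact $K\subset\Omega$, so in particular $\phi_k(K)\subset U$ for $k$ large. \emph{(ii)} The local, two-sided estimates near $\xi$ — in the form of the stability of the Kobayashi distance under the scaling $\sigma_k$, which is the ``main issue'' of the paper — show that $\sigma_k$ carries each Kobayashi ball $\{x\in\Omega:d^K_\Omega(x,q_k)\le R\}$ into a fixed compact subset of $\Omega_\infty$; since $\phi_k(K)$ sits in such a ball of radius $R_K:=\sup_{x\in K}d^K_\Omega(x,p)<\infty$, the family $\{F_k\}$ is locally uniformly bounded, hence normal. \emph{(iii)} Extract $F_k\to F:\Omega\to\overline{\Omega_\infty}$ with $F(p)=q_\infty\in\Omega_\infty$; the maximum principle for the plurisubharmonic function $(z,w)\mapsto\re w+P(z)$ composed with $F$ forces $F(\Omega)\subset\Omega_\infty$. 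To see that $F$ is a biholomorphism one runs the symmetric argument for $\{F_k^{-1}\}$ on compact subsets of $\Omega_\infty$: the same two-sided estimate shows $F_k^{-1}$ maps compacts into $d^K_\Omega$-bounded sets and, simultaneously, that such sets are relatively compact in $\Omega$ (i.e.\ that $\Omega$ is locally complete hyperbolic at $\xi$), whence $G:\Omega_\infty\to\Omega$ with $F\circ G=\mathrm{Id}$ and $G\circ F=\mathrm{Id}$. In particular $\Omega$ is Kobayashi hyperbolic.

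Once $F:\Omega\to\Omega_\infty$ is a biholomorphism it is an isometry $(\Omega,d^K_\Omega)\to(\Omega_\infty,d^K_{\Omega_\infty})$; as the target is a complete metric space that is Gromov hyperbolic, it follows that $\Omega$ is complete Kobayashi hyperbolic and $(\Omega,d^K_\Omega)$ is Gromov hyperbolic, as claimed.

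I expect the main obstacle to be step \emph{(ii)}, together with the part of \emph{(iii)} that handles the inverse maps: obtaining uniform control of the scaled maps on compacta of $\Omega$ in the absence of any global finite type or boundedness hypothesis, and, relatedly, breaking the apparent circularity with the (a priori unknown) hyperbolicity of $\Omega$. This is precisely where the localized, two-sided Catlin estimates near $\xi$ are indispensable; step \emph{(iii)} then follows the by-now standard pattern of the scaling method, and step \emph{(i)} is the classical attraction argument at a peak point.
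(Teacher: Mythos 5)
Your overall strategy --- realize $\Omega$ as biholomorphic to a model domain and transfer hyperbolicity through the Kobayashi isometry --- is the same as the paper's, but the paper gets the biholomorphism for free by citing Berteloot's theorem (reference \cite{Bert}): under exactly the hypotheses of Theorem \ref{ncautoth}, $\Omega$ is biholomorphic to $\Omega_H$ for some subharmonic \emph{homogeneous} polynomial $H$ without harmonic terms. The proof is then two lines: Proposition \ref{cathom} gives $K_{\Omega_H}\asymp M_{r_H}$ globally, Theorem \ref{mainthmod} gives Gromov hyperbolicity of $(\Omega_H,d^C_{r_H})$, and bi-Lipschitz equivalence of metrics preserves Gromov hyperbolicity and completeness. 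You instead propose to rebuild the biholomorphism by hand from the scaling of Theorem \ref{scalingth}; that is essentially a re-derivation of Berteloot's theorem and, while plausible, it is a substantial amount of work the paper deliberately avoids.

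The genuine gap is the homogeneity of the limit polynomial. Your scaling produces a model $\Omega_\infty=\{\re w+P(z)<0\}$ with $P$ subharmonic, without harmonic terms, but \emph{a priori not homogeneous}, and you then assert that ``by the properties of such models established in the paper'' $\Omega_\infty$ is complete Kobayashi hyperbolic and $(\Omega_\infty,d^K_{\Omega_\infty})$ is Gromov hyperbolic. The paper establishes no such thing for general $P$: Theorem \ref{mainthmod} and the completeness proposition concern the \emph{Catlin} distance $d^C_{r_P}$, and the bridge to the Kobayashi distance, Proposition \ref{cathom}, is proved only for homogeneous $H$ --- its proof relies on the dilations $\Phi_\lambda(z,w)=(\lambda z,\lambda^m w)$ being automorphisms of the model, which fails when $P$ is inhomogeneous. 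For a general model domain the global equivalence $K_{\Omega_P}\asymp M_{r_P}$ is exactly the kind of statement the paper flags as unknown (it is why the proof of Theorem \ref{mainth} only ever uses Catlin's estimate locally, inside the bounded domain $\Omega$ itself). So either you must supply the additional argument showing that the limit model arising from an \emph{automorphism orbit} can be taken homogeneous (this is part of the content of Berteloot's theorem, obtained by a further rescaling of the model at infinity), or you must prove $K_{\Omega_P}\asymp M_{r_P}$ for your possibly inhomogeneous $P$; as written, the final transfer step does not go through.
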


It should be noticed that in the previous result we are not assuming boundedness or global smoothness of the domain. We refer the reader to, e.g. \cite{BP}, \cite{Gau} and \cite{IK} and bibliography therein for more about domains with a non-compact automorphisms group.  

As applications of our results, we study extension of biholomorphisms and commuting maps in finite type domains of $\C^2$. The canonical homeomorphism between the Gromov boundary (or any other ``intrinsic'' boundary) and the natural boundary of a domain, allows to prove that every biholomorphism---and more general, every quasi-isometry---extends as a homeomorphism on the boundary (see \cite{BB, BG1, BG2, BGZ}). Using our main result, we can add bounded smooth finite type domains to this list (see Theorem \ref{Thm:extension}). Finally, our main result  allows to obtain an extension of a result of Behan \cite{Behan}   for commuting holomorphic self-maps of the unit disc to the setting of finite type domains in $\C^2$ (see Theorem~\ref{Thm:Behan}).
\medbreak

\textbf{Acknowledgements:} The author would like to express his gratitude to Prof. Herv\'e Gaussier for his availability and for introducing him to Gromov hyperbolicity theory, and to Prof. Filippo Bracci for useful discussions and suggestions.

\section{Preliminaries}

\subsection{Gromov hyperbolic spaces} One of the possible reference for the Gromov hyperbolic spaces is Bridson and Haefliger’s book \cite{BH}.

We recall that a metric space $(X,d)$ is called \textit{proper} if the closed balls are compact.

\begin{definition} Let $(X,d)$ be a metric space, $I\subset\R$ be an interval and $A\geq1$ and $B\geq0$. A function $\sigma:I\longrightarrow X$ is
\begin{enumerate}
	\item a \textit{geodesic} if for each $s,t\in I$ 
	$$d(\sigma(s),\sigma(t)))=|t-s|. $$
	\item a $(A,B)$ \textit{quasi-geodesic} if for each $s,t\in I$
	$$A^{-1}|t-s|-B\leq d(\sigma(s),\sigma(t)))\leq A|t-s|+B.$$
\end{enumerate}
Finally, we use the word \textit{ray} if $I=[0,+\infty)$, and \textit{line} if $I=\R$.
\end{definition}	

A metric space is a \textit{geodesic space} if for any two points there is a geodesic that connects them.
A \textit{geodesic triangle} is a choice of three points in $X$ and geodesic
segments connecting these points.

\begin{definition} A proper geodesic metric space $(X,d)$ is called $\delta$\textit{-hyperbolic} if for every geodesic triangle and any point on any of the sides of the triangle is within distance $\delta$ of the other two sides. Finally, a metric space is \textit{Gromov hyperbolic} if it is $\delta$-hyperbolic for some $\delta\geq0$.
\end{definition}

For every $x,y,o\in X$ we define the \textit{Gromov product} as 
$$(x|y)_o:=\frac{1}{2}[d(x,o)+d(y,o)-d(x,y)].$$

The Gromov hyperbolic spaces have a natural definition of boundary: let $\A$ be the set of geodesic rays starting from a fixed point $o\in X$ and $\sim$ an equivalent relation on $\A$ given by
$$\sigma_1\sim\sigma_2 \ \Leftrightarrow \ \sup_{t\geq0}d(\sigma_1(t),\sigma_2(t))<+\infty.$$
We define the \textit{Gromov boundary} as $\partial^GX:=\A/\sim$, and the \textit{Gromov compactification} as $\overline{X}^G:=X\cup\partial^GX$. 

We can also define a topology on $\overline{X}^G$. For simplicity we introduce the following notation: if $\sigma:[0,T]\longrightarrow X$ is a geodesic we set $\sigma(\infty):=\sigma(T)$, and if $\sigma:[0,+\infty)\longrightarrow X$ is a geodesic ray we set $\sigma(\infty)$ as the equivalence class in $\partial^GX$. Now $\overline{X}^G$ has a topology where $\xi_n\in\overline{X}^G\rightarrow \xi\in\overline{X}^G$ if for every choice of geodesic $\sigma_n$ with $\sigma_n(0)=o$ and $\sigma_n(\infty)=\xi_n$ we have that every subsequence of $\{\sigma_n\}$ has a subsequence which converges locally uniformly to
a geodesic $\sigma$ with $\sigma(\infty)=\xi$.
Finally, it easy to see that the subspace topology of $X$ is the same topology that arise from $(X,d)$.

The interest on this compactification comes from the nice theorem about the extension of mappings. We say that a continuous bijective function $f:(X,d_X)\longrightarrow(Y,d_Y)$ is a \textit{quasi-isometry} if there exists $A\geq1$ and $B\geq0$ such that for every $p,q\in X$
$$A^{-1}d_X(p,q)-B\leq d_Y(f(p),f(q))\leq Ad_X(p,q)+B.$$
Finally, we recall that Gromov hyperbolicity is invariant for quasi isometries.

\begin{theorem}\label{extth}
Let $(X,d_X)$ and $(Y,d_Y)$ be two Gromov hyperbolic spaces, and $f:X\longrightarrow Y$ be a quasi-isometry. Then $f$ extends to an unique homeomorphism $f^*:\overline{X}^G\longrightarrow\overline{Y}^G.$
\end{theorem}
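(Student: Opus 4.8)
The plan is to build the extension $f^{*}$ by hand and then upgrade it to a homeomorphism via a compactness argument. On $X\subset\overline{X}^{G}$ we simply set $f^{*}:=f$, which is already a continuous bijection onto $Y$ by hypothesis. To define $f^{*}$ on $\partial^{G}X$, fix a base point $o\in X$ and put $o':=f(o)$. The two defining inequalities of a quasi-isometry show at once that $f$ maps geodesics of $X$ to $(A,B)$-quasi-geodesics of $Y$ (and, more generally, quasi-geodesics to quasi-geodesics with constants depending only on $A,B$), and since $f$ is continuous these images are continuous paths. I would then invoke the stability of quasi-geodesics in proper $\delta$-hyperbolic spaces (the Morse Lemma, see \cite{BH}): there is $R=R(\delta,A,B)\geq0$ such that every $(A,B)$-quasi-geodesic lies in the $R$-neighbourhood of any geodesic with the same endpoints, and conversely; the same holds for quasi-geodesic rays and geodesic rays sharing an endpoint at infinity. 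Hence, if $\sigma:[0,+\infty)\to X$ is a geodesic ray with $\sigma(0)=o$, the quasi-geodesic ray $f\circ\sigma$ lies within finite Hausdorff distance of a geodesic ray $\sigma'$ from $o'$, unique up to the equivalence $\sim$, and I set $f^{*}(\sigma(\infty)):=\sigma'(\infty)\in\partial^{G}Y$.

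Well-definedness and bijectivity are then formal. If $\sigma_{1}\sim\sigma_{2}$ are geodesic rays from $o$, then $\sup_{t}d_{X}(\sigma_{1}(t),\sigma_{2}(t))<+\infty$ forces $\sup_{t}d_{Y}(f\sigma_{1}(t),f\sigma_{2}(t))\leq A\sup_{t}d_{X}(\sigma_{1}(t),\sigma_{2}(t))+B<+\infty$, so the associated geodesic rays in $Y$ are equivalent; independence of the chosen fellow-travelling geodesic in $Y$, and of the base point, are standard. Since $f$ is a bijective quasi-isometry, its inverse $f^{-1}$ satisfies the quasi-isometry inequalities with constants $(A,AB)$, so $f^{-1}$ induces in the same manner a boundary map $(f^{-1})^{\sharp}:\partial^{G}Y\to\partial^{G}X$; as $f^{-1}\circ f=\id_{X}$ and the fellow-travelling construction is compatible with composition, $(f^{-1})^{\sharp}$ is a two-sided inverse of $f^{*}|_{\partial^{G}X}$. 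Together with $f(X)=Y$ and the fact that images of geodesic rays are unbounded (so $f^{*}$ does not mix $X$ with $\partial^{G}Y$, nor $\partial^{G}X$ with $Y$), this proves $f^{*}:\overline{X}^{G}\to\overline{Y}^{G}$ is a bijection.

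The real content is continuity of $f^{*}$, which I would verify through the geodesic description of the Gromov topology recalled above. Let $\xi_{n}\to\xi$ in $\overline{X}^{G}$, choose geodesics or geodesic rays $\sigma_{n}$ from $o$ with $\sigma_{n}(\infty)=\xi_{n}$, and choose $\tau_{n}$ from $o'$ with $\tau_{n}(\infty)=f^{*}(\xi_{n})$. By construction and the Morse Lemma, $\tau_{n}$ stays within a Hausdorff distance $R'=R'(\delta,A,B)$ of $f\circ\sigma_{n}$, uniformly in $n$. Given any subsequence, properness together with Arzelà–Ascoli yields a further subsequence along which $\sigma_{n}\to\sigma$ and $\tau_{n}\to\tau$ locally uniformly; by hypothesis $\sigma(\infty)=\xi$ (with $\sigma$ a geodesic when $\xi\in X$). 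Since $f$ is uniformly continuous on each compact ball $\overline{B}(o,T)$, one gets $f\circ\sigma_{n}\to f\circ\sigma$ locally uniformly, and passing to the limit in the uniform fellow-travelling bound shows $\tau$ lies within $R'$ of $f\circ\sigma$, hence $\tau(\infty)=f^{*}(\xi)$. This is exactly the convergence $f^{*}(\xi_{n})\to f^{*}(\xi)$ demanded by the topology of $\overline{Y}^{G}$; the mixed cases (some $\xi_{n}\in X$, or $\xi\in X$) are treated identically, using in addition that $d_{X}(o,\xi_{n})\to+\infty$ when $\xi_{n}\to\xi\in\partial^{G}X$. Finally, $\overline{X}^{G}$ is compact (the Gromov bordification of a proper geodesic hyperbolic space) and $\overline{Y}^{G}$ is Hausdorff (indeed metrizable), so the continuous bijection $f^{*}$ is automatically a homeomorphism; uniqueness follows since $X$ is dense in $\overline{X}^{G}$, $\overline{Y}^{G}$ is Hausdorff, and $f^{*}$ extends $f$. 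I expect the only delicate point to be the organisation of the compactness-plus-stability argument — keeping the Morse constant uniform in $n$ and handling the interior/boundary mixed cases — while the remaining steps are bookkeeping.
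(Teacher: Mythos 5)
The paper offers no proof of Theorem \ref{extth}: it is quoted as a classical background fact, with Bridson--Haefliger \cite{BH} as the implicit reference. Your argument is precisely the standard proof of that fact --- extend $f$ to $\partial^G X$ via the Morse Lemma (stability of quasi-geodesics in proper geodesic $\delta$-hyperbolic spaces), check well-definedness and bijectivity through the coarse inverse, verify continuity with the geodesic description of the Gromov topology plus Arzel\`a--Ascoli, and conclude by compactness of $\overline{X}^G$ and Hausdorffness of $\overline{Y}^G$ --- and it is correct. The only point worth flagging is that the paper's definition of quasi-isometry demands continuity, which $f^{-1}$ need not inherit; this is harmless, since the boundary map of a (possibly discontinuous) coarse inverse is still defined by the usual taming/discretisation of quasi-geodesics, or one can prove surjectivity of $f^*|_{\partial^G X}$ directly.
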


\subsection{Finsler metrics and Kobayashi distance}

In this section we will review some of the main concepts about Finsler metrics and in particular about Kobayashi metric. An introduction can be found respectively in \cite{AbPatr} and in \cite{Kob}.

\begin{definition}
	Let $\Omega\subset\C^d$ be a domain. A \textit{Finsler metric} is an upper semicontinuous function 
	$F:\Omega\times\C^d\longrightarrow[0,+\infty)$
	with the following properties
	\begin{enumerate}
		\item $F(p,v)>0$ for all $p\in\Omega$, $v\in\C^d$ $v\neq0$;
		\item $F(p,\lambda v)=|\lambda|F(p,v)$ for all $p\in\Omega$, $v\in\C^d$ and $\lambda\in\C$.
	\end{enumerate}
\end{definition}

Given a Finsler metric, we can define a distance on $\Omega$
$$d^F_\Omega(p,q)=\inf\left\{\int_0^1F(\gamma(t),\gamma'(t))dt: \gamma:[0,1]\longrightarrow\Omega, \ \mathcal{C}^1\ \mbox{piecewise},\  \gamma(0)=p,\gamma(1)=q \right\}.$$

An important Finsler metric in complex analysis is the Kobayashi metric: we define the following function
$$K_\Omega(p,v):=\inf\{|\lambda|: f:\D\longrightarrow\Omega\ \mbox{holomorphic},\ f(0)=p, df_0(\lambda)=v\}. $$
If for each $p\in\Omega$ and $v\neq0$ $K_\Omega(p,v)>0$ we say that $\Omega$ is \textit{Kobayashi hyperbolic}, and in this case $K_\Omega$ is a Finsler metric on $\Omega$ that we call \textit{Kobayashi metric}. Finally, the associate distance $d^K_\Omega$ is called \textit{Kobayashi distance}.

We recall the following fundamental result
\begin{proposition}\label{Kobdecr}
	Let $\Omega_1\subset\C^d$ and $\Omega_2\subset\C^q$ be two domains, and $f:\Omega_1\longrightarrow\Omega_2$ be an holomorphic function. Then for each $p,q\in\Omega_1$ and $v\in\C^d$ we have
	$$K_{\Omega_2}(f(p),df_p(v))\leq K_{\Omega_1}(p,v)$$
	and
	$$d^K_{\Omega_1}(f(p),f(q))\leq d^K_{\Omega_2}(p,q),$$
	moreover, if $f$ is a biholomorphism we have the equality in both the equations. 
\end{proposition}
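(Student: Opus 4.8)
The plan is to prove Proposition~\ref{Kobdecr} directly from the definition of the Kobayashi metric, the distance-decreasing property being a formal consequence of the infinitesimal one. First I would establish the infinitesimal inequality $K_{\Omega_2}(f(p),df_p(v))\le K_{\Omega_1}(p,v)$. Fix $p\in\Omega_1$ and $v\in\C^n$. If $g:\D\longrightarrow\Omega_1$ is any holomorphic disc with $g(0)=p$ and $dg_0(\lambda)=v$, then $f\circ g:\D\longrightarrow\Omega_2$ is holomorphic, $(f\circ g)(0)=f(p)$, and by the chain rule $d(f\circ g)_0(\lambda)=df_p(dg_0(\lambda))=df_p(v)$. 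Hence $f\circ g$ is a competitor in the infimum defining $K_{\Omega_2}(f(p),df_p(v))$, so $K_{\Omega_2}(f(p),df_p(v))\le|\lambda|$. Taking the infimum over all such $g$ (equivalently, over all admissible $\lambda$) yields $K_{\Omega_2}(f(p),df_p(v))\le K_{\Omega_1}(p,v)$.

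Next I would deduce the integrated (distance) inequality. Let $\gamma:[0,1]\longrightarrow\Omega_1$ be a piecewise $\mathcal{C}^1$ curve with $\gamma(0)=p$, $\gamma(1)=q$. Then $f\circ\gamma:[0,1]\longrightarrow\Omega_2$ is a piecewise $\mathcal{C}^1$ curve joining $f(p)$ to $f(q)$, with $(f\circ\gamma)'(t)=df_{\gamma(t)}(\gamma'(t))$ wherever $\gamma$ is differentiable. Applying the infinitesimal inequality pointwise,
$$K_{\Omega_2}\bigl((f\circ\gamma)(t),(f\circ\gamma)'(t)\bigr)=K_{\Omega_2}\bigl(f(\gamma(t)),df_{\gamma(t)}(\gamma'(t))\bigr)\le K_{\Omega_1}(\gamma(t),\gamma'(t)),$$
and integrating over $[0,1]$ gives that the Kobayashi length of $f\circ\gamma$ is at most that of $\gamma$. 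Since $f\circ\gamma$ is an admissible curve in $\Omega_2$ between $f(p)$ and $f(q)$, its length bounds $d^K_{\Omega_2}(f(p),f(q))$ from above; taking the infimum over all $\gamma$ yields $d^K_{\Omega_2}(f(p),f(q))\le d^K_{\Omega_1}(p,q)$. (One should note the statement as written has the inequality in the form $d^K_{\Omega_1}(f(p),f(q))\le d^K_{\Omega_2}(p,q)$, which is a typo for the displayed conclusion $d^K_{\Omega_2}(f(p),f(q))\le d^K_{\Omega_1}(p,q)$; the proof gives the latter.)

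Finally, for the biholomorphism case, apply the two inequalities both to $f$ and to $f^{-1}$: from $f$ we get $K_{\Omega_2}(f(p),df_p(v))\le K_{\Omega_1}(p,v)$, and from $f^{-1}$ applied at the point $f(p)$ with vector $df_p(v)$ we get $K_{\Omega_1}(p,v)=K_{\Omega_1}\bigl(f^{-1}(f(p)),d(f^{-1})_{f(p)}(df_p(v))\bigr)\le K_{\Omega_2}(f(p),df_p(v))$, whence equality; the same argument with the distances gives $d^K_{\Omega_1}(p,q)=d^K_{\Omega_2}(f(p),f(q))$. I do not anticipate a genuine obstacle here: the only points requiring mild care are the measurability/integrability of $t\mapsto K_{\Omega}(\gamma(t),\gamma'(t))$ along piecewise $\mathcal{C}^1$ curves (which follows from upper semicontinuity of $F$ together with continuity of $\gamma,\gamma'$ on each piece) and the correct bookkeeping of the chain rule for $df$, so the argument is essentially a direct unwinding of definitions.
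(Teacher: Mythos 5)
Your proof is correct and is the standard argument (the paper states this proposition without proof, as a classical fact); composing candidate discs gives the infinitesimal contraction, integrating along curves gives the distance contraction, and applying both to $f$ and $f^{-1}$ gives equality for biholomorphisms. You are also right that the displayed distance inequality in the statement contains a typo and should read $d^K_{\Omega_2}(f(p),f(q))\leq d^K_{\Omega_1}(p,q)$, which is what your argument establishes.
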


	We conclude this section with a result about the regularity of quasi-geodesics.
	\begin{proposition}
		Let $\Omega\subset\C^d$ be a bounded taut domain. Then, for each $A\geq1$ there exists $C_A>0$ such that any $(A,0)$ quasi-geodesic $\sigma:I\longrightarrow\Omega$ is $C_A$-Lipschitz with respect to the Euclidean distance (and so there exists $\sigma'(t)$ almost everywhere), and 
		$$K_\Omega(\sigma(t),\sigma'(t))\leq A $$
		for almost every $t\in I$.
	\end{proposition}
	\proof
	The proof is analogous to \cite[Proposition 4.6]{BZ}, so we omit it.
	\endproof

\section{Catlin metric for finite type domains}

In this section we recall a Finsler metric which can be defined in a domain in $\C^2$ near a boundary point of finite type. This metric was introduced by Catlin in \cite{Cat} to have an estimate of Kobayashi metric near a boundary point of finite type.
This estimate can be seen as a generalization in $\C^2$ of the metric defined in \cite{BB} that works in strongly pseudoconvex domains.

Let $\Omega\subseteq\C^2$ be a pseudoconvex domain, $r$ its defining function. Let $\xi\in\partial\Omega$ such that $r$ is smooth of finite type in a neighborhood of $\xi$.
Suppose that $\frac{\partial r}{\partial w}(\xi)\neq0$, then in a neighborhood of $\xi$ we can define the following vector fields
$$L_r:=\frac{\partial}{\partial z}-\Bigl(\frac{\partial r}{\partial w}\Bigr)^{-1}\frac{\partial r}{\partial z}\frac{\partial}{\partial w},\ \  \mbox{and}\ \  N:=\frac{\partial}{\partial w}.$$
For any $j,k>0$, set
$$\mathscr{L}^{j,k}_r(p):=\underbrace{L_r\dots L_r}_{j-1\ times} \underbrace{\bar{L}_r\dots \bar{L}_r}_{k-1\ times} \mathscr{L}_r(p)(L_r,\bar{L}_r).$$
Now if $p\in\partial\Omega$ is point of type $m$ there exist $j_0,k_0$ with $j_0+k_0=m$ (see \cite{Cat}) such that 
$$\mathscr{L}_r^{j,k}(p)=0\ \ \ j+k<m, \ \ \mbox{and} $$
$$\mathscr{L}_r^{j_0,k_0}(p)\neq0. $$
We define
$$C_l^r(p)=\max\{|\mathscr{L}_r^{j,k}(p)|: j+k=l\}.$$
Let $X=aL_r+bN$ be an holomorphic tangent vector at $p$ and set
\begin{equation}\label{catmetric}
M_r(p,X):=\frac{|b|}{|r(p)|}+|a|\sum_{l=2}^{m}\Bigl(\frac{C^r_l(p)}{|r(p)|}\Bigr)^{\frac{1}{l}}.
\end{equation}

We denote with $d_r^C$ the distance associated to the Finsler metric $M_r$.

The interest in this metric is given by the following

\begin{theorem}[Catlin]\label{CatlinTh}\cite{Cat}
Let $\Omega\subset\C^2$ be a bounded smooth pseudoconvex domain with defining function $r$. Let $\xi\in\partial\Omega$ of finite type, then there exists a neighborhood $U$ of $\xi$ and $A\geq1$ such that 
$$A^{-1}M_r(x,v)\leq K_\Omega(x,v)\leq AM_r(x,v) $$
for each $x\in\Omega\cap U$ and $v\in\C^2$.
\end{theorem}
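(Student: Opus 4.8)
The plan is to establish the two inequalities separately, after a standard localization and a change of coordinates adapted to the finite type. Fixing $x\in\Omega\cap U$ and a nearest boundary point $\pi(x)$, a polynomial automorphism of $\C^2$ (which by Proposition~\ref{Kobdecr} leaves $K_\Omega$ unchanged and $M_r$ unchanged up to a fixed constant) brings us to $\pi(x)=0$, $\partial r/\partial w(0)\neq0$, $x=(0,-\delta)$ with $\delta=\delta(x):=|r(x)|\asymp\mathrm{dist}(x,\partial\Omega)$, and $r$ in Catlin normal form
\[
r(z,w)=\re[w]+\sum_{\substack{2\le j+k\le m\\ j,k\ge1}}a_{jk}\,z^j\bar z^k+O(|z|^{m+1}+|w|\,|z|+|w|^2),
\]
where, by pseudoconvexity and finiteness of the type, $\max\{|a_{jk}|:j+k=l\}\asymp C^r_l(x)$ for $2\le l\le m$. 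Set $\tau(x):=\min_{2\le l\le m}(\delta/C^r_l(x))^{1/l}$. Then $M_r(x,N)=1/\delta$, $M_r(x,L_r)\asymp1/\tau(x)$, and $M_r(x,aL_r+bN)\asymp|a|/\tau(x)+|b|/\delta$, so Catlin's estimate is equivalent to
\[
K_\Omega(x,aL_r+bN)\asymp\frac{|a|}{\tau(x)}+\frac{|b|}{\delta}\qquad(x\in\Omega\cap U).
\]

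For the upper bound I would attach to $x$ the non-isotropic polydisc (``Catlin box'') $Q(x):=\{(z,w):|z|<c\,\tau(x),\ |w+\delta|<c\,\delta\}$ and prove that $Q(x)\subset\Omega$ for a small constant $c>0$ depending only on $\Omega$ and $U$. This is the one genuine estimate in this half: expanding $r$ about $x$, the degree-$l$ tangential terms oscillate over $Q(x)$ by $O(C^r_l(x)\,\tau(x)^l)=O(\delta)$ by the choice of $\tau(x)$, the remainder by $O(\tau(x)^{m+1})=o(\delta)$ since $\tau(x)\to0$, and the $w$-dependence by $O(\delta)$; hence for $c$ small the oscillation of $r$ on $Q(x)$ is at most $\tfrac12\delta$, so $r<-\tfrac12\delta<0$ there. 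Proposition~\ref{Kobdecr} then gives $K_\Omega(x,v)\le K_{Q(x)}(x,v)$, and the Kobayashi metric of a bidisc at its centre, computed explicitly and read in the frame $(L_r,N)$ (the off-diagonal contribution being negligible since $1/\tau(x)\to\infty$ and $\partial r/\partial z$ is small near $0$ in the normal form), is $\lesssim|a|/\tau(x)+|b|/\delta\asymp M_r(x,aL_r+bN)$. This yields $K_\Omega\le A\,M_r$ near $\xi$.

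The lower bound is the hard direction. First localize: since a point of finite type in $\C^2$ is a plurisubharmonic peak point, the localization principle for the Kobayashi metric gives $K_\Omega(x,v)\ge(1-o(1))K_{\Omega\cap U}(x,v)$ as $x\to\xi$, reducing matters to a lower bound for $K_{\Omega\cap U}$ near $\xi$. Since $K_{\Omega\cap U}\ge c_{\Omega\cap U}$ (the Carath\'eodory metric), it is enough to produce, for every such $x$, a bounded holomorphic $h_x:\Omega\cap U\to\C$ with $h_x(x)$ small and $|d(h_x)_x(v)|\gtrsim M_r(x,v)$. The natural local models are the holomorphic functions $h_0=(w+\delta)/\delta$ (for the $N$-direction, with differential $1/\delta$ at $x$) and $h_0=z/\tau(x)$ (for the $L_r$-direction, with differential $1/\tau(x)$ at $x$); these are not bounded on $\Omega\cap U$, so one multiplies $h_0$ by a cutoff $\chi$ supported on the Catlin box $Q(x)$ and corrects the error by solving $\bar\partial u_x=\bar\partial(\chi h_0)=(\bar\partial\chi)h_0$ with $\|u_x\|_\infty\ll1$. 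The existence of such a small solution, uniformly in $x$ as $x\to\xi$, is exactly the content of the subelliptic $\bar\partial$-estimates for pseudoconvex domains of finite type in $\C^2$ (Kohn), applied on the non-isotropic scales $\tau(x)\times\delta$ that match the subelliptic geometry; with it, $h_x:=\chi h_0-u_x$ is holomorphic and bounded, $h_x(x)=-u_x(x)$ is small, and $|d(h_x)_x(v)|\gtrsim M_r(x,v)$. Equivalently, and this is closer to Catlin's actual route, one may phrase the lower bound through a uniform family of bounded plurisubharmonic weights adapted to $Q(x)$ with complex Hessian at $x$ comparable to $M_r(x,\cdot)^2$; the analytic content is the same. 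Combining with the upper bound gives $A^{-1}M_r\le K_\Omega\le A\,M_r$ near $\xi$.

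I expect the main obstacle to be this lower estimate --- concretely, the $\bar\partial$-solution with small sup-norm on the non-isotropic scales $\tau(x)\times\delta$, uniformly as $x\to\xi$ (equivalently, the uniform construction of the plurisubharmonic weights). This is where all three hypotheses enter: pseudoconvexity for the $\bar\partial$-estimate at all, finiteness of the type for the subelliptic gain that makes the correction small, and dimension $2$ through the explicit non-isotropic structure --- one-dimensional Levi leaves and the clean invariants $C^r_l$ --- which has no known counterpart in higher dimension. A tempting shortcut is to dilate $\Omega\cap U$ anisotropically by $(z,w)\mapsto(\tau(x_n)^{-1}z,\ \delta(x_n)^{-1}(w+\delta(x_n)))$ and pass to a model domain $\{\re[w]+P(z)<0\}$ with $P$ a subharmonic polynomial without harmonic terms and $P\not\equiv0$, on which the Kobayashi metric at the base point is comparable to the Euclidean norm; but this requires the stability of the Kobayashi metric under the scaling, which is not available at this stage (and which is itself established later in the paper with the help of Catlin's estimate). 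Everything else --- the normal form, the bidisc computation, and the localization principle --- is routine.
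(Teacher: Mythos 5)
The paper does not prove this statement at all: it is imported verbatim from Catlin's paper \cite{Cat} and used as a black box, so there is no internal proof to compare yours against. Judged on its own terms, your outline is a faithful reconstruction of the standard strategy. The reduction of $M_r(x,aL_r+bN)$ to the two-scale quantity $|a|/\tau(x)+|b|/\delta$ (using that a finite sum of nonnegative terms is comparable to its maximum), the inclusion $Q(x)\subset\Omega$ of the non-isotropic bidisc via the oscillation bound $C^r_l(x)\tau(x)^l\le\delta$ together with $\tau(x)^{m+1}=o(\delta)$ (which needs $C^r_m\gtrsim1$ near a point of type $m$, true but worth saying), and the resulting upper bound $K_\Omega\le A M_r$ by monotonicity of the Kobayashi metric are all correct and are essentially how Catlin argues.

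The lower bound, however, is not proved in your proposal: you reduce it to producing, uniformly in $x$, either a sup-norm-small solution of $\bar\partial$ on the scales $\tau(x)\times\delta$ or a family of uniformly bounded plurisubharmonic weights with complex Hessian comparable to $M_r(x,\cdot)^2$, and then you cite Kohn/Catlin for that construction. That construction \emph{is} the analytic content of \cite{Cat}; deferring to it means the hard half of the theorem is assumed rather than established, so as a standalone proof the proposal has a genuine gap exactly where you locate it. Two further points in your favour: you correctly identify why the argument is confined to dimension two (the one-dimensional Levi foliation and the clean invariants $C^r_l$), and you correctly observe that the tempting anisotropic-scaling shortcut would be circular in the context of this paper, since the stability of the Kobayashi metric under those dilations is derived later (Theorem \ref{scalingth} and Proposition \ref{cathom}) precisely \emph{from} Catlin's estimate.
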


We will see in the next section the importance of the model domains. A model domain $\Omega_P$ is a domain of the form
$$\Omega_P=\{(z,w)\in\C^2: \re[w]+P(z)<0\},$$
where $P:\C\longrightarrow\R$ is a subharmonic polynomial without harmonic terms (i.e. if we write $P(z)=\sum_{j+k\leq l}a_{j,k}z^j\bar{z}^k$ then $a_{0,k}=a_{j,0}=0$ for each $0\leq j,k\leq l$) and $P(0)=0$.

If we set $r_P(z,w):=\re[w]+P(z)$ the defining function of $\Omega_P$, since $\frac{\partial r_P}{\partial w}\equiv\frac{1}{2}$ we can define the metric (\ref{catmetric}) in all $\Omega_P$ and we obtain the following formula.

\begin{lemma}\label{Catmod}
The Catlin metric in $\Omega_P$ is 
$$M_{r_P}((z,w),(x,y))=\frac{|y+2xP'(z)|}{|r_P(z,w)|}+|x|\sum_{l=2}^{m}\Bigl(\frac{A^P_l(z)}{|r_P(z,w)|}\Bigr)^{\frac{1}{l}},$$
where $r_P(z,w)=\re[w]+P(z)$ and $A^P_l(z)=\max\left\{\left|\frac{\partial^{j+k}P}{\partial z^j\partial \bar{z}^k}(z)\right|:j,k>0,\ j+k=l\right\}$.
\end{lemma}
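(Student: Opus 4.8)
The plan is to unwind the definition (\ref{catmetric}) of the Catlin metric in the special case $r=r_P$, exploiting that $r_P=\frac12(w+\bar w)+P(z)$ is affine in $\re[w]$ and otherwise depends only on $z,\bar z$, so that all the iterated vector-field derivatives appearing in the definition collapse to honest partial derivatives of $P$.

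First I would record the elementary computations: $\frac{\partial r_P}{\partial w}\equiv\frac12$ (hence $(\partial r_P/\partial w)^{-1}=2$) and $\frac{\partial r_P}{\partial z}=\frac{\partial P}{\partial z}=:P'(z)$, so the frame introduced before Theorem \ref{CatlinTh} becomes $L_{r_P}=\frac{\partial}{\partial z}-2P'(z)\frac{\partial}{\partial w}$ and $N=\frac{\partial}{\partial w}$. Since $\{L_{r_P},N\}$ is a frame for $\C^2$ at every point, a vector written in the standard coordinates as $(x,y)=x\frac{\partial}{\partial z}+y\frac{\partial}{\partial w}$ equals $aL_{r_P}+bN$ with $a=x$ and $b=y+2xP'(z)$; substituting $|a|=|x|$ and $|b|=|y+2xP'(z)|$ into (\ref{catmetric}) already produces the numerator $|y+2xP'(z)|$ of the first term and the factor $|x|$ in front of the sum.

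It remains to identify $C^{r_P}_l$ with $A^P_l$. The only non-vanishing second-order complex derivative of $r_P$ is $\frac{\partial^2 r_P}{\partial z\partial\bar z}=\frac{\partial^2 P}{\partial z\partial\bar z}$ (every derivative involving $w$ or $\bar w$ dies because $r_P$ is affine in $\re[w]$), and the $\frac{\partial}{\partial z}$-component of $L_{r_P}$ is $1$, so $\mathscr{L}_{r_P}(z,w)(L_{r_P},\bar L_{r_P})=\frac{\partial^2 P}{\partial z\partial\bar z}(z)$, a function of $(z,\bar z)$ alone. The key observation is that $L_{r_P}$ and $\bar L_{r_P}$ act on any function independent of $(w,\bar w)$ exactly as $\frac{\partial}{\partial z}$ and $\frac{\partial}{\partial\bar z}$ (the $\frac{\partial}{\partial w}$, $\frac{\partial}{\partial\bar w}$ parts annihilate such functions, and differentiation in $z,\bar z$ preserves this independence). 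Iterating,
$$\mathscr{L}^{j,k}_{r_P}(z,w)=\frac{\partial^{j-1}}{\partial z^{j-1}}\frac{\partial^{k-1}}{\partial\bar z^{k-1}}\frac{\partial^2 P}{\partial z\partial\bar z}(z)=\frac{\partial^{j+k}P}{\partial z^j\partial\bar z^k}(z),$$
whence $C^{r_P}_l(z,w)=\max\{|\frac{\partial^{j+k}P}{\partial z^j\partial\bar z^k}(z)|:j,k>0,\ j+k=l\}=A^P_l(z)$. Plugging this and the frame change into (\ref{catmetric}) yields the asserted formula.

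I do not expect a genuine obstacle: the statement is a direct computation. The only points demanding a little care are (i) keeping track of the factor $(\partial r_P/\partial w)^{-1}=2$ in the change of frame, which is what puts the $2xP'(z)$ inside the first term, and (ii) justifying that the complex Hessian of $r_P$ — and therefore every $\mathscr{L}^{j,k}_{r_P}$ — depends only on $(z,\bar z)$, which is precisely what lets the vector-field derivatives degenerate to ordinary partial derivatives of $P$.
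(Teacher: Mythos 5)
Your computation is correct and is exactly the direct unwinding of definition (\ref{catmetric}) that the paper has in mind (the lemma is stated there without proof, as an immediate consequence of $\frac{\partial r_P}{\partial w}\equiv\frac12$). Both the frame change $a=x$, $b=y+2xP'(z)$ and the identification $\mathscr{L}^{j,k}_{r_P}=\frac{\partial^{j+k}P}{\partial z^j\partial\bar z^k}$ via the $(w,\bar w)$-independence of the complex Hessian are the right and complete justification.
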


Now we prove that the estimate in Theorem \ref{CatlinTh} works also in homogeneous model domains.

\begin{proposition}\label{cathom}
Let $H:\C\longrightarrow\R$ be subharmonic homogeneous polynomial without harmonic terms  of degree $m$ and $\Omega_H$ be its model domain. Then there exists $A\geq1$ such that 
\begin{equation}\label{eqmetrics}
A^{-1}M_{r_H}(x,v)\leq K_{\Omega_H}(x,v)\leq AM_{r_H}(x,v)
\end{equation}
for each $x\in\Omega_H$ and $v\in\C^2$.
\end{proposition}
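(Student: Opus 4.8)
The plan is to reduce the global comparison \eqref{eqmetrics} to a purely \emph{local} statement near the boundary point $0=(0,0)\in\partial\Omega_H$, and then to spread the local estimate over all of $\Omega_H$ by means of the dilation symmetry of a homogeneous model. The first step is to record this symmetry. For $t>0$ let $\delta_t(z,w):=(tz,t^m w)$; homogeneity of $H$ gives $r_H\circ\delta_t=t^m r_H$, so each $\delta_t$ is an automorphism of $\Omega_H$. By Proposition~\ref{Kobdecr} the Kobayashi metric $K_{\Omega_H}$ is $\delta_t$-invariant, and a direct inspection of the explicit formula for $M_{r_H}$ established above shows that $M_{r_H}$ is $\delta_t$-invariant as well: under $\delta_t$ one has $|r_H|\mapsto t^m|r_H|$, a tangent vector $(x,y)\mapsto(tx,t^m y)$, $H'(z)\mapsto t^{m-1}H'(z)$ and $A_l^H(z)\mapsto t^{m-l}A_l^H(z)$ for $2\le l\le m$, and these weights are exactly arranged so that each of the two summands defining $M_{r_H}$ is unchanged.

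The second step is to prove \eqref{eqmetrics} on a neighbourhood of $0$. The hypersurface $\partial\Omega_H$ is everywhere smooth (since $\partial r_H/\partial w\equiv\tfrac12$) and $0$ is a point of finite type $m$, because $H$ vanishes to order exactly $m$ at the origin. Since $\Omega_H$ is unbounded, Catlin's Theorem~\ref{CatlinTh} cannot be quoted directly, so I would first choose a bounded smooth pseudoconvex domain $D$ coinciding with $\Omega_H$ on a neighbourhood of $0$ --- e.g.\ intersect $\Omega_H$ with a large ball and smooth out the resulting corner, keeping pseudoconvexity --- so that $0\in\partial D$ still has finite type $m$. Theorem~\ref{CatlinTh} then gives $K_D\asymp M_{r_D}=M_{r_H}$ on $D\cap U=\Omega_H\cap U$ for a small neighbourhood $U\ni0$. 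It remains to compare $K_D$ and $K_{\Omega_H}$ near $0$: the inequalities $K_{\Omega_H}\le K_{\Omega_H\cap U}$ and $K_D\le K_{D\cap U}$ are the distance-decreasing property, while the reverse comparisons $K_{\Omega_H\cap U}\lesssim K_{\Omega_H}$ and $K_{D\cap U}\lesssim K_D$ near $0$ follow from the standard localization of the Kobayashi metric at a smooth pseudoconvex finite type boundary point (such a point admits local plurisubharmonic barriers). Combining these yields a neighbourhood $U'\ni0$ and a constant $A\ge1$ with $A^{-1}M_{r_H}\le K_{\Omega_H}\le A M_{r_H}$ on $\Omega_H\cap U'$.

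The third step is propagation. Given any $p=(z_p,w_p)\in\Omega_H$ and $v\in\C^2$, we have $\delta_t(p)=(tz_p,t^m w_p)\to(0,0)$ as $t\to0^+$, so there is $t_0>0$ with $\delta_{t_0}(p)\in\Omega_H\cap U'$. Applying the local estimate at $\delta_{t_0}(p)$ and then the two invariances from the first step (biholomorphic invariance of $K_{\Omega_H}$ and $\delta_t$-invariance of $M_{r_H}$) transports the inequalities back to $(p,v)$ with the \emph{same} constant $A$; this is precisely \eqref{eqmetrics}.

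The one genuinely delicate point is the localization in the second step: one must be certain that Catlin's estimate, which is proved for bounded domains, really transfers to the unbounded model $\Omega_H$, i.e.\ that $K_{\Omega_H}$ is comparable near $0$ to the Kobayashi metric of a bounded domain that agrees with $\Omega_H$ there. This rests on the existence of good plurisubharmonic peak/barrier functions at finite type points in $\C^2$; by contrast, the dilation computation and the propagation are routine.
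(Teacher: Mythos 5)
Your proof is correct and follows essentially the same route as the paper: a local Catlin estimate at a boundary point, obtained by replacing $\Omega_H$ with a bounded smooth pseudoconvex domain that agrees with it near that point, followed by propagation over all of $\Omega_H$ via the dilations $(z,w)\mapsto(tz,t^mw)$, which are automorphisms of $\Omega_H$ and isometries of both $K_{\Omega_H}$ and $M_{r_H}$. The one step you flag as delicate --- comparing $K_{\Omega_H}$ near the boundary with the Kobayashi metric of the bounded truncation --- is handled in the paper by citing Theorem 2.2 of \cite{Yu}, which is exactly the localization result you invoke.
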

\proof
We prove first two preliminary results.

\textbf{Claim 1:} For each $\xi\in\partial\Omega_H$, there exist a neighborhood $U$ of $\xi$ and $A\geq1$ such that (\ref{eqmetrics}) holds for each $x\in\Omega_H\cap U$.
\proof
We cannot apply directly Theorem \ref{CatlinTh} because $\Omega_H$ is not bounded, for this reason we need a localization result. Fix $R>0$ such that $\xi\in B_R(0)$ and $\Psi_R(x)=x^m\exp(-1\slash(x-R))\chi_{[R,\infty)}$ and set
$$r(z,w):=\re[w]+P(z)+\Psi_R(|z|^2+|w|^2)$$
and $\Omega:=\{r<0\}$. Notice that $\Omega$ is bounded, smooth, pseudoconvex, $\Omega\cap B_R(0)=\Omega_H\cap B_R(0)$ and $M_r=M_{r_H}$ in $\Omega\cap B_R(0)$. Now we can apply Theorem \ref{CatlinTh} to $\xi\in\partial\Omega$, then there exist a neighborhood $U\subset\Omega\cap B_R(0)$ of $\xi$ and $B\geq1$ such that for all $x\in \Omega\cap U$ and $v\in\C^2$ we have
$$B^{-1}M_{r}(x,v)\leq K_{\Omega}(x,v)\leq BM_{r}(x,v).$$
Moreover, by Theorem 2.2 in \cite{Yu}, we  have 
$$\lim_{\Omega\cap U\ni x\rightarrow\partial\Omega}\frac{K_\Omega(x,v)}{K_{\Omega_H}(x,v)}=1$$
and the limit is uniform, indeed there exists $C\geq1$ such that for all $x\in\Omega\cap U$, $v\in\C^2$
$$C^{-1}K_{\Omega}(x,v)\leq K_{\Omega_H}(x,v)\leq CK_{\Omega}(x,v).$$
Finally, the two estimates imply the statement if we set $A:=BC$.
\endproof
For all $\lambda>0$ we define $\Phi_\lambda(z,w):=(\lambda z,\lambda^m w)$. Since $H$ is homogeneous of degree $m$, $\Phi_\lambda$ is an automorphism of $\Omega_H$ for all $\lambda>0$.

\textbf{Claim 2:} The function $\Phi_\lambda$ is an isometry for $M_{r_H}$, that is for all $p\in\Omega_H$ and $v\in\C^2$ we have
$$\Phi^*_\lambda M_{r_H}(p,v)=M_{r_H}(p,v).$$
\proof
Elementary computation.
\endproof
Now we are ready for the proof of the Proposition. For all $p\in\Omega_H$ and $v\in\C^2$, let $\lambda>0$ small enough such that $\Phi_\lambda(p)\in U$ where $U$ is the neighborhood of Claim 1. Using Claim 2 and recalling that the automorphisms are isometries of the Kobayashi metric, we have
$$K_{\Omega_H}(p,v)=K_{\Omega_H}(\Phi_\lambda(p),(d\Phi_\lambda)_pv)\leq AM_{r_H}(\Phi_\lambda(p),(d\Phi_\lambda)_pv)=AM_{r_H}(p,v)$$
and similarly for the other inequality. 
\endproof

\section{Scaling of coordinates}

In this section we do a scaling process in order to have the following result (the technique is essentially the one in \cite{Bert}, but we repeat the proof because we need to study the behavior of Catlin's metric).

\begin{theorem}\label{scalingth}
Let $\Omega\subset\C^2$ be a pseudoconvex domain, and $\{\eta_n\}_{n\in\N}\subseteq\Omega$ a sequence that converges to a point $\eta_\infty\in\partial\Omega$ such that $\partial\Omega$ is smooth and of finite type in a neighborhood $U$ of $\eta_\infty$. Denote $r$ its defining function, and $M_r$ the Catlin metric defined in a neighborhood $U$ of $\eta_\infty$. Then, passing to a subsequence, there exists a sequence of automorphisms of $\C^2$ $\{\psi_n\}_{n\in\N}$ and a polynomial $P:\C\longrightarrow\R$ subharmonic, without harmonic terms and $P(0)=0$, such that 
\begin{enumerate}
\item  there exists $\epsilon_n\rightarrow0$ such that 
$r_n:=\frac{1}{\epsilon_n}r\circ\psi_n^{-1}\rightarrow r_P$ locally uniformly on $\C^2$;
\item there exists $A_n\searrow1$ such that 
$$A^{-1}_nM_{r_n}\leq(d\psi_n^*)M_r\leq A_nM_{r_n}$$ in $\psi_n(\Omega\cap U)$ and $M_{r_n}\rightarrow M_{r_P}$ locally uniformly on $\Omega_P$;
\item $\psi_n(\eta_n)\rightarrow(0,-1)$.
\end{enumerate}
\end{theorem}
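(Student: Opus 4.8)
The plan is to carry out the Pinchuk non-isotropic dilation in the form adapted to finite type domains in $\C^2$ (following \cite{Bert} and \cite{Cat}), keeping track at every step of how the Catlin metric $M_r$ transforms. First I would reduce to a convenient local picture: after an affine change of coordinates we may assume $\eta_\infty=0$, that $\partial\Omega$ near $0$ is $\{\re[w]+\text{(higher order)}=0\}$ with $\frac{\partial r}{\partial w}(0)\neq0$, and that the weighted homogeneous part of the defining function has been put in the normal form used by Catlin, so that $r(z,w)=\re[w]+\sum_{2\le j+k\le m} a_{j,k} z^j\bar z^k + (\text{lower weight / error terms})$ with no pure terms $z^j$, $\bar z^k$. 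For each $n$ I would pick $\zeta_n\in\partial\Omega$ the boundary point ``below'' $\eta_n$ (closest boundary point, or the point obtained by flowing along $N=\partial/\partial w$), let $\epsilon_n=|r(\eta_n)|=\operatorname{dist}$-type quantity $\to 0$, and compose three explicit automorphisms of $\C^2$: (i) a translation sending $\zeta_n$ to $0$; (ii) a polynomial shear $\tau_n$ of the form $(z,w)\mapsto(z, w + (\text{polynomial in } z))$ that kills the pure terms and the mixed terms of low weight in the Taylor expansion of $r$ at $\zeta_n$, i.e. puts $r\circ\tau_n^{-1}$ in the form $\re[w]+P_n(z)+o(1)$-error; (iii) the non-isotropic dilation $\Lambda_n(z,w)=(\delta_n^{-1}z,\ \epsilon_n^{-1} w)$ where $\delta_n\to 0$ is chosen (via Catlin's weighting) so that the polynomial $P_n$ rescales to something of bounded, non-degenerate size. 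Set $\psi_n=\Lambda_n\circ\tau_n\circ(\text{translation})$ and $r_n=\epsilon_n^{-1} r\circ\psi_n^{-1}$.

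For item (1): the coefficients $a_{j,k}^{(n)}$ of the mixed terms of the Taylor expansion of $r$ at $\zeta_n$ converge (since $\zeta_n\to 0$ and $r$ is smooth) to the coefficients $a_{j,k}$ of $r$ at $0$; the shear $\tau_n$ and dilation $\Lambda_n$ turn these into the coefficients of $P_n$, and after passing to a subsequence $P_n\to P$ coefficientwise for some polynomial $P$ of degree $\le m$ with no harmonic terms and $P(0)=0$. The error terms (the $\mathcal{C}^\infty$ remainder in Taylor's formula, plus the parts of $r$ of weight higher than $m$) are $o(1)$ after division by $\epsilon_n$ and the dilation, uniformly on compacta, because of the exact exponents $\delta_n,\epsilon_n$ chosen by the Catlin weighting — this is the standard Pinchuk estimate. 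Subharmonicity of $P$ is inherited from pseudoconvexity of $\Omega$ (the Levi form of $r_n$ is nonnegative on $\partial\Omega_n$ and passes to the limit). This gives $r_n\to r_P$ locally uniformly.

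For item (3): by construction $\psi_n$ sends $\zeta_n\mapsto 0$ and sends $\eta_n$ to a point whose $w$-coordinate has real part $\epsilon_n^{-1}\cdot(-\epsilon_n)=-1$ up to $o(1)$ and whose other coordinates are $o(1)$ (because $\eta_n-\zeta_n$ is, in the right weighted sense, comparable to $\epsilon_n$ only in the $w$-direction); a final harmless modification of the shear arranges $\psi_n(\eta_n)\to(0,-1)$ exactly.

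For item (2): this is the part requiring the most care, since it is new relative to \cite{Bert}. Because each $\psi_n$ is an affine (translation) composed with a polynomial shear composed with a diagonal dilation, one computes $(d\psi_n)$ explicitly and checks that $M_r(\psi_n^{-1}(x),(d\psi_n^{-1})v)$ — which is exactly $((d\psi_n^*)M_r)(x,v)$ after rescaling — has the same \emph{shape} as $M_{r_n}(x,v)$: the normal component $|b|/|r|$ is preserved up to the factor $\epsilon_n$ already absorbed in $r_n$, and the tangential sum $|a|\sum_l (C_l^r/|r|)^{1/l}$ transforms into $|a|\sum_l (C_l^{r_n}/|r_n|)^{1/l}$ up to constants $A_n$ that measure (a) the ratio between the true differential operators $L_r,\bar L_r$ and the ``model'' ones $\partial_z,\partial_w$ along the scaled domain, and (b) the error between the full $C_l^r$ and the part coming only from the weighted-homogeneous polynomial $P_n$. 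The key point is that both sources of error tend to $0$: the coefficient $(\frac{\partial r}{\partial w})^{-1}\frac{\partial r}{\partial z}$ in $L_r$ evaluated at $\psi_n^{-1}(x)$ tends to the corresponding coefficient for $r_P$ (which is just $2P'(z)$) uniformly on compacta, and the higher derivatives $\mathscr{L}_r^{j,k}$ at $\psi_n^{-1}(x)$, once rescaled, converge to $\mathscr{L}_{r_P}^{j,k}(x)$ because $r_n\to r_P$ in $\mathcal{C}^\infty_{loc}$ (convergence of Taylor coefficients upgrades to $\mathcal{C}^\infty_{loc}$ convergence here, the remainder being genuinely negligible). Hence one can take $A_n=1+o(1)\searrow 1$, and the same $\mathcal{C}^\infty_{loc}$ convergence gives $M_{r_n}\to M_{r_P}$ locally uniformly on $\Omega_P$ (away from $\partial\Omega_P$, where $|r_n|$ is bounded below on compacta).

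The main obstacle I anticipate is item (2): one must verify that Catlin's quantities $C_l^r$, which are built from iterated non-holomorphic vector fields and are only defined near the boundary point $\eta_\infty$, behave well under the scaling — in particular that the ``lower order'' derivatives of $r$ that are not captured by the limiting polynomial $P$ contribute only an $o(1)$ perturbation after rescaling, and that $L_r$ degenerates exactly to the model vector field $L_{r_P}=\partial_z - 2P'(z)\partial_w$. This is essentially a bookkeeping argument about weights (each $\mathscr{L}_r^{j,k}$ with $j+k=l$ scales by $\delta_n^{-(l-?)}$ matching $|r_n|^{?/l}$), but it has to be done carefully to get the constant $\searrow 1$ rather than merely bounded; the smoothness of $\partial\Omega$ near $\eta_\infty$ and the finite-type assumption are exactly what make the remainder terms subleading.
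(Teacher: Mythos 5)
Your proposal follows essentially the same route as the paper: translation to a boundary point reached along the $w$-direction, a polynomial shear killing pure/harmonic terms, an anisotropic dilation $(z,w)\mapsto(z/\tau_n,w/\epsilon_n)$ with $\tau_n$ normalizing the rescaled polynomial, and then an explicit computation showing $L_r$ transforms into $L_{r_n}\to L_{r_P}=\partial_z-2P'\partial_w$, that $C_l^r$ rescales by $\tau_n^l/\epsilon_n$ into $C_l^{r_n}\to A_l^P$, and that the constant $A_n\searrow1$ comes from the shear coefficient $d_{n,0}\to1$. This matches the paper's proof in both structure and the key item-(2) bookkeeping, so no further comment is needed.
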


After a holomorphic change of coordinates we can suppose that $\eta_n$ converges to the origin and 
\begin{equation}\label{tayexp}
r(z,w)=\re[w]+H(z)+o(|z|^{m+1}+|z||w|)
\end{equation}
where $H:\C\longrightarrow\R$ is a homogeneous polynomial of degree $m$, subharmonic and without harmonic terms (see \cite{BP}). Consider $\epsilon_n>0$ such that $\left(\eta^{(1)}_n,\eta^{(2)}_n+\epsilon_n\right)$ is in $\partial\Omega$ and define the following automorphism of $\C^2$
$$\phi_n^{-1}(z,w)=\left(\eta^{(1)}_n+z,\eta^{(2)}_n+\epsilon_n+d_{n,0}w+\sum_{k=2}^md_{n,k}z^k\right) $$ where $d_{n,k}$ are chosen such that $r\circ\phi^{-1}_n=\re[w]+Q_n(z)+o(|z|^{m+1}+|z||w|)$ where $Q_n:\C\longrightarrow\R$ is a subharmonic polynomial without harmonic terms and $Q_n(0)=0$. Notice that for $n\rightarrow\infty$ we have $d_{n,0}\rightarrow1$ and $d_{n,k}\rightarrow0$ for all $k=1,\dots m$.
Now define $\tau_n>0$ such that $$||Q_n(\tau_n\cdot)||=\epsilon_n$$
where $||\cdot||$ is a norm in the space of polynomials of degree less then $m$: if we denote with $P_n(z)=\frac{1}{\epsilon_n}Q_n(\tau_nz)$ there exists a subsequence such that $P_n$ converges to a polynomial  $P$. Finally, we define $$\delta_n(z,w)=\left(\frac{z}{\tau_n},\frac{w}{\epsilon_n}\right),$$
and $\psi_n=\delta_n\circ\phi_n$.

By a simple computation we have 
$$\frac{1}{\epsilon_n}r\circ\psi_n^{-1}\rightarrow r_P $$
uniformly in a neighborhood of the origin in $\mathcal{C}^m$ topology, and that 
$$\psi_n(\eta_n)=\left(0,-d_{n,0}^{-1}\right)\rightarrow (0,-1) $$

Now, we want to prove (2). First of all we have
$$\frac{\partial r_n}{\partial w}(p)=\frac{\partial}{\partial w}\left[\frac{1}{\epsilon_n}r\circ\psi_n^{-1}(p)\right]=d_{0,n}\frac{\partial r}{\partial w}(\psi_n^{-1}(p))$$

$$\frac{\partial r_n}{\partial z}(p)=\frac{\partial}{\partial z}\left[\frac{1}{\epsilon_n}r\circ\psi_n^{-1}(p)\right]=\left[\frac{\tau_n}{\epsilon_n}\frac{\partial r}{\partial z}(\psi_n^{-1}(p))+\frac{\tau_n\sum_{k=2}^mkd_{n,k}(\tau_nz)^{k-1}}{\epsilon_n}\frac{\partial r}{\partial w}(\psi_n^{-1}(p))\right].$$
And this implies that 
\begin{align*}\
\tau_n((d\psi_n)_*L_{r})(p)&=\tau_n(d\psi_n)_{\psi_n^{-1}(p)}L_r(\psi^{-1}_n(p))\\&=\begin{pmatrix}
1 & 0\\
-d_{0,n}^{-1}\tau_n\epsilon_n^{-1}\sum_{k=2}^mkd_{n,k}(\tau_nz)^{k-1} & d_{0,n}^{-1}\tau_n\epsilon_n^{-1}
\end{pmatrix}\begin{pmatrix}
1\\
-\left(\frac{\partial r}{\partial w}(\psi_n^{-1}(p))\right)^{-1}\frac{\partial r}{\partial z}(\psi_n^{-1}(p))
\end{pmatrix}\\&=\begin{pmatrix}
1\\
-\left(\frac{\partial r_n}{\partial w}(p)\right)^{-1}\frac{\partial r_n}{\partial z}(p)
\end{pmatrix}=L_{r_n}(p),
\end{align*}

moreover, $$L_{r_n}\rightarrow L_{r_P}=\frac{\partial}{\partial z}-2P'\frac{\partial}{\partial w}$$.

Now using the chain rule for the Levi form we have
\begin{align*}
\frac{\tau_n^2}{\epsilon_n}\L^{1,1}_{r}({\psi^{-1}_n(p)})&=\frac{\tau_n^2}{\epsilon_n}\L_{r}({\psi^{-1}_n(p)})(L_r(\psi^{-1}_n(p)),\overline{L}_r(\psi^{-1}_n(p)))\\ &=\frac{\tau_n^2}{\epsilon_n}\L_{r\circ\psi^{-1}_n}(p)((d\psi_n)_*L_r(p),\overline{(d\psi_n)_*L_r}(p))\\
&=\L_{\frac{1}{\epsilon_n}r\circ\psi^{-1}_n}(p)(\tau_n(d\psi_n)_*L_r(p),\tau_n\overline{(d\psi_n)_*L_r}(p))\\
&=\L_{r_n}(p)(L_{r_n}(p),\overline{L}_{r_n}(p))=\L^{1,1}_{r_n}(p)
\end{align*}
Consequently, we have for each $l\in\{2,\dots,m\}$
$$\frac{\tau_n^l}{\epsilon_n}C_l^r(\psi^{-1}_n(p))=C_l^{r_n}(p),$$
and 
$$C_l^{r_n}(p)\rightarrow C_l^{r_P}(p)=A_l^P(z)$$
uniformly on compact sets of $\C^2$. 

Now, let $v=\begin{pmatrix}x\\y\end{pmatrix}\in\C^2$ and $a(p,v),b(p,v)\in\C$ such that $v=a(v,p)L_r(p)+b(v,p)N(p)$. Then,
$$a(p,v)=x$$
$$b(p,v)=y+\left(\frac{\partial r}{\partial w}(p)\right)^{-1}\frac{\partial r}{\partial z}(p)x $$
Thus,
$${\tau_n}^{-1}a(\psi_n^{-1}(p),(d\psi_n^{-1})_pv)=x$$
and
$${\epsilon_n}^{-1}b(\psi_n^{-1}(p),(d\psi_n^{-1})_pv)=d_{0,n}\left[y+\left(\frac{\partial r_n}{\partial w}(p)\right)^{-1}\frac{\partial r_n}{\partial z}(p)x\right]$$
that implies the thesis if we set $A_n=\max\{|d_{n,0}|, |d_{n,0}|^{-1}\}\rightarrow1$. 

\begin{remark}\label{rmk1}
From (2) of the previous Theorem we notice that the maps $\psi_n:\Omega\cap U\longrightarrow\psi_n(\Omega\cap U)$ are $(A_n,0)$ quasi-isometries with respect to the Catlin distance, consequently if $\sigma:I\longrightarrow\Omega\cap U$ is a $(A,0)$ quasi-geodesic then $\psi_n\circ\sigma$ is a $(A_nA,0)$ quasi-geodesic in $\psi_n(\Omega\cap U)$. Finally, since $A_n\searrow1$ and $M_{r_n}\to M_{r_p}$, if $\psi_n\circ\sigma$ converges locally uniformly to some function $\hat{\sigma}:I\longrightarrow\Omega_P$ then $\hat{\sigma}$ is a $(A,0)$ quasi-geodesic of $\Omega_P$ with respect to the Catlin distance. 
\end{remark}

\section{Gromov hyperbolicity of model domains}

In this section we prove the Gromov hyperbolicity of the Catlin distance in the model domains. In particular if the polynomial is homogeneous, using Proposition \ref{cathom} we have the Gromov hyperbolicity of the Kobayashi distance.
The strategy is inspired by the proof in \cite{Zim1} of Gromov hyperbolicity of convex finite type domains.

We start with a scaling process for model domains.

\begin{proposition}\label{scalingpol}
Let $\Omega_Q$ be a model domain and let $\{\eta_n\}_{n\in\N}$ be a sequence of points in $\Omega_Q$. Then, passing to a subsequence, there exist a sequence of automorphisms of $\C^2$ $\{\psi_n \}_{n\in\N}$, 
a $\lambda_n>0$ and $P_n:\C\longrightarrow\R$ a subharmonic polynomial without harmonic terms, $P_n(0)=0$ that converges local uniformly to a subharmonic polynomial without harmonic terms $P$, such that
\begin{enumerate}
\item $\lambda_n^{-1}r_P\circ\psi^{-1}_n=r_{P_n}\rightarrow r_P$ locally uniformly on $\C^2$,
\item $(d\psi_n^*)M_{r_Q}=M_{r_{P_n}}\rightarrow M_{r_P}$ locally uniformly on $\Omega_P$,
\item $\psi_n(\eta_n)=(0,-1).$
\end{enumerate}
\end{proposition}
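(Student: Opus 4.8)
\emph{Proof sketch.} The plan is to run exactly the construction of Theorem~\ref{scalingth}, using the point of $\partial\Omega_P$ ``below'' $u_n$ as base point; the only conceptual difference is that $r_P$ is already a polynomial, so that all the coordinate changes are polynomial automorphisms of $\C^2$ and the scaled defining functions are \emph{exactly} of the form $r_{Q_n}$ (no Taylor remainder) and the Catlin metric transforms by an exact equality rather than up to a factor tending to $1$.

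First I would project $u_n=(u_n^{(1)},u_n^{(2)})$ vertically: set $\lambda_n:=|r_P(u_n)|=-\re[u_n^{(2)}]-P(u_n^{(1)})>0$, so that $(u_n^{(1)},u_n^{(2)}+\lambda_n)\in\partial\Omega_P$. Translating by this point produces the term $P(u_n^{(1)}+z)$, which in general carries harmonic monomials; since $P$ is real these form a term $2\re[h_n(z)]$ for a holomorphic polynomial $h_n$ with $h_n(0)=0$ and $\deg h_n\le\deg P$, and they are absorbed into $w$ by the shear $w\mapsto w-2h_n(z)$ (note that $\re[u_n^{(2)}+\lambda_n]=-P(u_n^{(1)})$ cancels the constant term automatically). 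Putting
$$\phi_n^{-1}(z,w):=\bigl(u_n^{(1)}+z,\ u_n^{(2)}+\lambda_n+w-2h_n(z)\bigr),$$
one gets $r_P\circ\phi_n^{-1}(z,w)=\re[w]+\widetilde Q_n(z)$, where $\widetilde Q_n$ is subharmonic (its Laplacian equals $(\Delta P)(u_n^{(1)}+\cdot)$), has no harmonic terms, $\widetilde Q_n(0)=0$, and whose top homogeneous part coincides with that of $P$; in particular the coefficient of $w$ is exactly $1$ because $\partial r_P/\partial w\equiv\tfrac12$ is constant, which is why no error factor will appear. Then I would normalise: in the finite-dimensional space of polynomials of degree $\le\deg P$ with a fixed norm $\|\cdot\|$, choose $\tau_n>0$ with $\|\widetilde Q_n(\tau_n\,\cdot)\|=\lambda_n$ (possible by the intermediate value theorem, $\widetilde Q_n$ being non-constant), set $\delta_n(z,w):=(z/\tau_n,w/\lambda_n)$, $\psi_n:=\delta_n\circ\phi_n$ and $Q_n(z):=\tfrac1{\lambda_n}\widetilde Q_n(\tau_n z)$. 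By construction $\|Q_n\|=1$, $Q_n$ is subharmonic, has no harmonic terms, $Q_n(0)=0$; by compactness of the unit sphere, after passing to a subsequence $Q_n\to Q$, and since ``subharmonic, no harmonic terms, vanishing at $0$'' cuts out a closed cone, the limit $Q$ has all these properties and, being of norm $1$, is non-constant, so $\Omega_Q$ is a genuine model domain. A direct computation gives $\tfrac1{\lambda_n}r_P\circ\psi_n^{-1}=r_{Q_n}\to r_Q$ locally uniformly on $\C^2$, and $\psi_n^{-1}(z,w)=u_n$ forces $z=0$, $w=-1$, i.e. $\psi_n(u_n)=(0,-1)$, giving (1) and (3).

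For (2) I would reuse the chain-rule computations of the proof of Theorem~\ref{scalingth}(2), now with the coefficient of $w$ equal to $1$: one obtains $\tau_n(d\psi_n)_*L_{r_P}=L_{r_{Q_n}}$, then $\tfrac{\tau_n^l}{\lambda_n}\,C_l^{r_P}\circ\psi_n^{-1}=C_l^{r_{Q_n}}$ for $2\le l\le m$, and the analogous transformation of the $(a,b)$-components of a holomorphic tangent vector, whence $(d\psi_n^*)M_{r_P}=M_{r_{Q_n}}$ \emph{exactly} (the factor $\max\{|d_{n,0}|,|d_{n,0}|^{-1}\}$ from Theorem~\ref{scalingth} is here $1$). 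Finally $M_{r_{Q_n}}\to M_{r_Q}$ locally uniformly on $\Omega_Q$: on a compact $K\Subset\Omega_Q$ one has $r_{Q_n}\to r_Q<0$ uniformly, so $|r_{Q_n}|$ is bounded below on $K$ for large $n$, while $Q_n'\to Q'$ and $A_l^{Q_n}=C_l^{r_{Q_n}}\to C_l^{r_Q}=A_l^Q$ uniformly on $K$; hence the explicit formula for the Catlin metric of a model domain passes to the limit.

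The step to watch is the normalisation together with the compactness argument: one must check that $\tau_n$ is well defined for every $n$ (guaranteed because $\widetilde Q_n$ has the same nonzero top homogeneous part as $P$, hence is non-constant) and, more importantly, that the subsequential limit $Q$ does not degenerate, so that $\Omega_Q$ is still a model domain — this is precisely ensured by the normalisation $\|Q_n\|=1$ and the closedness of the relevant cone of polynomials. Everything else is the routine bookkeeping already carried out for Theorem~\ref{scalingth}, simplified by the absence of remainder terms. $\qed$
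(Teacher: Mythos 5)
Your argument is correct and is exactly the proof the paper has in mind: the author omits it, stating only that it is ``analogous to Theorem~\ref{scalingth}'', and your write-up carries out that analogy (vertical projection to the boundary, shear absorbing the harmonic part of $P(u_n^{(1)}+\cdot)$, normalisation $\|Q_n\|=\lambda_n$-scaling, compactness in the finite-dimensional polynomial space). You also correctly identify the one genuine simplification, namely that $\partial r_P/\partial w\equiv\tfrac12$ makes the coefficient $d_{n,0}=1$, so the Catlin metric transforms by an exact equality rather than up to the factor $A_n\searrow1$ of Theorem~\ref{scalingth}.
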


\proof
The proof is very similar to the construction in Section 4. 

We denote $\lambda_n=-r_Q(\eta_n)>0$ and consider the following automorphism of $\C^2$
$$\phi_n^{-1}(z,w)=\left(\eta^{(1)}_n+z,\eta^{(2)}_n+\lambda_n+w+\sum_{k=2}^md_{n,k}z^k\right) $$ where $m$ is the degree of $Q$ and $d_{n,k}$ are chosen so that
$$(r_Q\circ\psi_n^{-1})(z,w)=\re[w]+Q(z+\eta_n^{(1)})-Q(\eta_n^{(1)})+\re\left[\sum_{k=2}^md_{n,k}z^k\right]=:\re[w]+Q_n(z)$$
where $Q_n$ has no harmonic terms. Now we define $\tau_n>0$ such that 
$$||Q_n(\tau_n\cdot)||=\lambda_n$$
where $||\cdot||$ is a norm in the space of polynomials of degree less than $m$, in this way if we set $P_n(z):=\lambda_n^{-1}Q_n(\tau_nz)$ there exists a subsequence such that $P_n$ converges to a polynomial $P$. Again we define
$$\delta_n(z,w)=\left(\frac{z}{\tau_n}, \frac{w}{\lambda_n}\right) $$
and if we set $\psi_n=\delta_n\circ\phi_n$, we obtain by construction (1) and (3). 
For (2), fix $n\in\N$, $p:=(z,w)\in \Omega_{P_n}$ and $v:=(x,y)\in\C^2$. We notice that
$$A^Q_l(\tau_nz+\eta^{(1)}_n)=\lambda_n\tau_n^{-l}A^{P_n}_l(z),$$
then using Lemma \ref{Catmod} we obtain
\begin{align*}
(d\psi_n^*)M_Q(p,v)&=M_Q(\psi_n^{-1}(p),(d\psi_n^{-1})_pv)\\&=\frac{|\lambda_ny+\tau_nx[\sum_{k=0}^mkd_{n,k}(\tau_nz)^{k-1}+2Q'(\tau_nz+\eta_n^{(1)})|}{|r_Q(\psi^{-1}_n(p))|}+\tau_n|x|\sum_{l=2}^{m}\Bigl(\frac{A^Q_l(\tau_nz+\eta_n^{(1)})}{|r_Q(\psi^{-1}_n(p))|}\Bigr)^{\frac{1}{l}}
\\&=\frac{|\lambda_ny+2\lambda_nxP_n'(z)|}{\lambda_n|r_{P_n}(p)|}+\tau_n|x|\sum_{l=2}^{m}\Bigl(\frac{\lambda_n\tau_n^{-l}A^{P_n}_l(z)}{\lambda_n|r_{P_n}(p)|}\Bigr)^{\frac{1}{l}}\\&=M_{P_n}(p,v).
\end{align*}
Finally, since $P_n\rightarrow P$ locally uniformly on $\C$, we have $M_{P_n}\to M_{P}$ locally uniformly on $\Omega_P$.
\endproof

Now we want to study the geodesics in model domains with respect to the Catlin metric.

\begin{lemma}\label{geomodel}
Let $\Omega_P$ be a model domain, then for each $(z,w)\in\partial\Omega_P$ and $a>0$ the following curve
$$\sigma(t)=(z,w-ae^{-t}),\ \ \ t\in\R $$
is a geodesic line with respect to the Catlin metric.
\end{lemma}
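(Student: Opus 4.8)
The plan is to show directly that the curve $\sigma(t)=(z,w-ae^{-t})$, which clearly lies in $\Omega_P$ for all $t\in\R$ (since $\re[w-ae^{-t}]+P(z)=\re[w]+P(z)-ae^{-t}=-ae^{-t}<0$ because $(z,w)\in\partial\Omega_P$), is parametrized so that $M_{r_P}$-arclength equals $t$ up to the right normalization, and then that it is actually length-minimizing. First I would compute the Catlin-metric speed of $\sigma$: along $\sigma$ we have $r_P(\sigma(t))=-ae^{-t}$, the tangent vector is $\sigma'(t)=(0,ae^{-t})$, so its $z$-component $x=0$ and the sum over $l$ in the formula for $M_{r_P}$ drops out entirely; what remains is $M_{r_P}(\sigma(t),\sigma'(t))=\frac{|ae^{-t}+2\cdot 0\cdot P'(z)|}{|-ae^{-t}|}=1$. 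Hence $\int_{s}^{t}M_{r_P}(\sigma(\tau),\sigma'(\tau))\,d\tau=|t-s|$, so $\sigma$ is a unit-speed curve and $d^{M_{r_P}}(\sigma(s),\sigma(t))\le |t-s|$ automatically.

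The remaining and genuinely substantive point is the lower bound $d^{M_{r_P}}(\sigma(s),\sigma(t))\ge |t-s|$, i.e. that no competing path does better. The standard device here is to produce a ``distance-like'' function that the metric dominates. Concretely, I would consider the function $u(z,w):=\log\frac{1}{|r_P(z,w)|}=-\log(-\re[w]-P(z))$ on $\Omega_P$ and show that $|du_{(z,w)}(v)|\le M_{r_P}((z,w),v)$ for every tangent vector $v$; this is exactly the kind of estimate for which the first term $\frac{|b|}{|r_P|}$ in the Catlin metric is designed, since $du$ applied to $v=aL_{r_P}+bN$ picks out precisely (a multiple of) the normal component $b$ divided by $|r_P|$, once one checks that $L_{r_P}$ annihilates $r_P$ (true by construction of $L_{r_P}$) so only the $N$-direction contributes. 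Granting this, for any piecewise-$\mathcal C^1$ path $\gamma$ from $\sigma(s)$ to $\sigma(t)$ we get
\[
\int_0^1 M_{r_P}(\gamma,\gamma')\,d\tau\ \ge\ \left|\int_0^1 \frac{d}{d\tau}u(\gamma(\tau))\,d\tau\right|\ =\ |u(\sigma(t))-u(\sigma(s))|\ =\ \bigl|\log(ae^{-t})^{-1}-\log(ae^{-s})^{-1}\bigr|\ =\ |t-s|,
\]
which is the desired lower bound, and combined with the upper bound shows $\sigma$ is a geodesic line (restricting to $[0,\infty)$ after reparametrizing gives the ray as stated).

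I expect the main obstacle to be the verification that $u=-\log|r_P|$ is genuinely $1$-Lipschitz for $M_{r_P}$, i.e. controlling $du(v)$ by $M_{r_P}(\cdot,v)$ for a general $v$, not just for vectors tangent to $\sigma$. Writing $v=aL_{r_P}+bN$, one has $du(v)=\frac{-1}{r_P}\bigl(a\,L_{r_P}(r_P)+b\,N(r_P)\bigr)+\text{c.c.-type terms}$, and since $L_{r_P}r_P=\frac{\partial r_P}{\partial z}-(\frac{\partial r_P}{\partial w})^{-1}\frac{\partial r_P}{\partial z}\frac{\partial r_P}{\partial w}=0$ while $N r_P=\frac{\partial r_P}{\partial w}=\tfrac12$, the computation collapses to $|du(v)|\le \frac{|b|}{|r_P|}\le M_{r_P}((z,w),v)$, using that $\re[w]+P(z)$ is real so $\partial r_P/\partial\bar w=\tfrac12$ as well. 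One must be a little careful that $b(p,v)$ in the Catlin-metric decomposition matches the coefficient appearing here — but this is exactly the $b(p,v)=y+(\partial r/\partial w)^{-1}(\partial r/\partial z)x$ identity recorded in the proof of Theorem \ref{scalingth}, specialized to $r_P$, so it is bookkeeping rather than a new idea. Once that Lipschitz estimate is in hand, everything else is the two displayed inequalities above.
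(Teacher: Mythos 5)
Your proposal is correct and follows essentially the same route as the paper: the paper's proof also bounds the Catlin length from below by keeping only the normal term $\frac{|y+2xP'(z)|}{|r_P|}$, which dominates $\left|\frac{d}{du}\log|r_P(\gamma(u))|\right|$ along any competing path and yields the lower bound $\left|\log\frac{r_P(p)}{r_P(q)}\right|$, exactly your statement that $-\log|r_P|$ is $1$-Lipschitz for $M_{r_P}$. The unit-speed computation for $\sigma$ is the same in both.
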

\proof
First of all we notice that for each $p,q\in\Omega_P$ and $\gamma$ a generic $\mathcal{C}^1$ piecewise curve joining $p$ to $q$ we have 
\begin{equation}\label{estimcat}
\begin{split}
d^C_{r_P}(p,q)&=\inf_{\gamma}\int M_{r_P}(\gamma(u),\gamma'(u))du\\&\geq\inf_\gamma\int\frac{|\gamma_2'(u)+2\gamma'_1(t)P(\gamma_2(t))|}{-\re[\gamma_2(u)]-P(\gamma_1(u))}du\geq\left|\log\left(\frac{r_P(p)}{r_P(q)}\right)\right|
\end{split}
\end{equation}
so in particular for all $s\leq t$ we obtain $d^C_{r_P}(\sigma(s),\sigma(t))\geq t-s$. Moreover from a simple computation we have
$$length(\sigma|_{[s,t]})=t-s,$$
which implies that $\sigma$ is a geodesic line.
\endproof

In order to talk about the Gromov hyperbolicity of $(\Omega_P,d^C_{r_P})$ we need to know that it is a geodesic space. Thanks to Hopf-Rinow Theorem, it is enough to prove that it is a complete metric space.

\begin{proposition}
Let $\Omega_P$ be a model domain, then $(\Omega_P,d^C_{r_P})$ is a complete metric space, in particular it is a geodesic space.
\end{proposition}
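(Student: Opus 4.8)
The plan is to establish completeness of $(\Omega_P, d^C_{r_P})$ by showing that every Cauchy sequence converges, since then Hopf--Rinow (the metric is locally compact, as $M_{r_P}$ is comparable to a continuous Finsler metric on compact subsets of $\Omega_P$) upgrades this to a proper geodesic space. First I would record the fundamental one-sided estimate already contained in \eqref{estimcat}: for all $p,q\in\Omega_P$,
$$d^C_{r_P}(p,q)\ge\left|\log\frac{r_P(p)}{r_P(q)}\right|=\bigl|\log(-r_P(p))-\log(-r_P(q))\bigr|.$$
This shows at once that if $\{p_k\}$ is $d^C_{r_P}$-Cauchy, then $\{-r_P(p_k)\}$ is a Cauchy sequence in $(0,+\infty)$ for the metric $|\log s-\log t|$, hence converges to some $c\in(0,+\infty)$; in particular the sequence stays at bounded $r_P$-distance from the boundary and cannot escape to a boundary point.

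Next I would control the remaining coordinates. The key is a companion lower bound showing that $d^C_{r_P}$ also dominates (up to the harmless logarithmic factor coming from the varying denominator $|r_P|$, which is now pinned between two positive constants along a Cauchy sequence) the Euclidean displacement in the $z$-variable and in $\im[w]$. Concretely, along any competitor curve $\gamma=(\gamma_1,\gamma_2)$ the term $|a|\bigl(A_2^P(z)/|r_P|\bigr)^{1/2}$ in $M_{r_P}$ dominates $|\gamma_1'|$ times a positive quantity that is locally bounded below away from the zero set of $A_2^P$, and the term $|b|/|r_P|=|\gamma_2'+2\gamma_1'P'(\gamma_1)|/|r_P|$ then controls $|\gamma_2'|$ once $\gamma_1'$ is controlled. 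Combined with the fact that $-r_P$ is bounded above and below along the sequence, this forces $\{p_k\}$ to stay in a fixed compact subset of $\Omega_P$; a subsequence converges in $\C^2$ to some $p_\infty$ with $-r_P(p_\infty)=c>0$, so $p_\infty\in\Omega_P$, and by the usual Cauchy argument the whole sequence converges to $p_\infty$ in $d^C_{r_P}$. Therefore $(\Omega_P,d^C_{r_P})$ is complete, hence geodesic.

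I expect the main obstacle to be the lower bound on the $z$-displacement near points where $A_2^P$ degenerates, since $P$ is merely subharmonic and its pure Hessian $A_2^P$ may vanish on a subvariety. There one must instead use higher-order coefficients $A_l^P$: subharmonicity without harmonic terms guarantees that for every $z$ some $A_l^P(z)\ne 0$ with $2\le l\le m$, so $\sum_{l=2}^m\bigl(A_l^P(z)/|r_P|\bigr)^{1/l}$ is strictly positive and locally bounded below on compact sets of $\Omega_P$; this is exactly the finite-type/hyperbolicity mechanism for $\Omega_P$. The cleanest route is probably to note that this sum is a positive continuous function on $\overline{\Omega_P}\setminus(\partial\Omega_P\cap\{z\text{-axis stuff}\})$... more safely, on compact subsets of $\Omega_P$ it is bounded below by a positive constant, which is all that is needed once the $r_P$-bounds have confined the sequence to such a compact set. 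With that observation in hand the argument closes with only routine estimates.
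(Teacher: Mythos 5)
Your first step is sound and matches the paper: the estimate \eqref{estimcat} pins $\log|r_P|$ along a Cauchy sequence (and, more importantly, along near-optimal connecting curves), so the sequence cannot approach $\partial\Omega_P$. The genuine gap is in how you rule out escape to infinity. You write that the sum $\sum_{l=2}^{m}\bigl(A_l^P(z)/|r_P|\bigr)^{1/l}$ is, on compact subsets of $\Omega_P$, bounded below by a positive constant, ``which is all that is needed once the $r_P$-bounds have confined the sequence to such a compact set.'' This is circular: the slab $\{c_1\le -r_P\le c_2\}$ is unbounded (it contains points with $|z|$ and $|\im[w]|$ arbitrarily large), so the $r_P$-bounds by themselves confine you to nothing compact. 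To exclude $p_k\to\infty$ you need a lower bound for $M_{r_P}$ in terms of the Euclidean norm that is uniform on this unbounded slab, and a bound valid only on compact subsets of $\Omega_P$ cannot deliver it. The same issue affects your control of $\gamma_2'$: the inequality $|y+2xP'(z)|+|x|\ge (1+2|P'(z)|)^{-1}(|x|+|y|)$ has a constant that degenerates as $|z|\to\infty$, so you may not invoke it before you know the $z$-coordinate of the competitor curves stays bounded.

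Both problems are repairable, and the repair is what the paper implicitly uses. Since $P$ has degree $m$ and no harmonic terms, some coefficient $a_{j,k}$ with $j,k\ge1$, $j+k=m$ is nonzero, hence $A_m^P\equiv j!\,k!\,|a_{j,k}|>0$ is a \emph{constant}; therefore $\sum_{l=2}^m\bigl(A_l^P(z)/|r_P|\bigr)^{1/l}\ge \bigl(A_m^P/c_2\bigr)^{1/m}>0$ uniformly in $z\in\C$ on the slab. This bounds $\int|\gamma_1'|$ by a multiple of the length, so the $z$-coordinate of every near-optimal curve stays in a fixed disc; only then is $P'(\gamma_1)$ bounded, and only then does the first term of $M_{r_P}$ control $|\gamma_2'|$ with a uniform constant. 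With these two points inserted your argument closes and is essentially the paper's: the paper organizes the same two estimates as a dichotomy (according to whether $\sup_{n,t}|\log|r_P(\gamma_n(t))||$ is finite or infinite) to show directly that $d^C_{r_P}(o,p_n)\to\infty$ whenever $p_n\to\partial\Omega_P\cup\{\infty\}$, and then concludes via Hopf--Rinow exactly as you do.
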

\proof
Fix $o\in\Omega_P$. It is sufficient to show that if $p_n\rightarrow\xi\in\partial\Omega_P\cup\{\infty\}$ then $$d^C_{r_P}(o,p_n)\rightarrow+\infty.$$ Then consider for each $n\in\N$ a piecewise $\mathcal{C}^1$ curve $\gamma_n:[0,1]\longrightarrow\Omega_P$ such that $\gamma_n(0)=o$ and $\gamma(1)=p_n$ and $d^C_{r_p}(o,p_n)+1\geq length(\gamma_n)$. Define
$$M:=\sup_{n\in\N}\max_{t\in [0,1]}|\log(|r_P(\gamma_n(t))|)|.$$
We distinguish two cases: If $M=+\infty$, then there exists $t_n\in[0,1]$ and $q_n:=\gamma_n(t_n)$ such that $|\log(|r_P(q_n)|)|\rightarrow+\infty$ and we obtain using the estimates (\ref{estimcat}) 
$$d^C_{r_p}(o,p_n)+1\geq length(\gamma_n)\geq length(\gamma_n|_{[0,t_n]})\geq d^C_{r_p}(o,q_n)\geq\left|\log\left(\frac{r_P(q_n)}{r_P(o)}\right)\right|\rightarrow+\infty.$$
Otherwise if $M<+\infty$, first of all $p_{1,n}\rightarrow\infty$ and we have that for each $n\in\N$ and $t\in[0,1]$
$$e^{-M}\leq|r_P(\gamma_n(t))|\leq e^M.$$
Let $m$ be the degree of $P(z):=\sum_{j,k>0,j+k\leq m}a_{j,k}z^j\bar{z}^k$, first of all we notice that $$A^P_m(z)=\max\{j!k!|a_{j,k}|:j,k>0,j+k=m \}=:A>0$$
does not depend on $z\in\C$. Then
$$M_{r_P}(\gamma_n(t),\gamma_n'(t))\geq \frac{A|\gamma_{1,n}'(t)|}{|r_P(\gamma_n(t))|^\frac{1}{m}}\geq Ae^{-\frac{M}{m}}|\gamma_{1,n}'(t)| $$
that implies
$$length(\gamma_n)\geq Ae^{-\frac{M}{m}}|p_{1,n}-o_1|\rightarrow+\infty.$$\endproof

Now we have a sequence of results concerning the behavior of the geodesics.

\begin{proposition}\label{prop1model}
Let $\Omega_P$ be a model domain, let $p_n, q_n\in\Omega_P$ be two sequences with $p_n\rightarrow\xi^+\in\partial\Omega\cup\{\infty\}$ and $q_n\rightarrow\xi^-\in\partial\Omega$, and 
$$\liminf_{n,m\rightarrow\infty}(p_n|q_m)_o<\infty,$$
then $\xi^+\neq\xi^-$ (the Gromov product is with respect to $d_{r_P}^C$).
\end{proposition}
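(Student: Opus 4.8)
\emph{Proof proposal.} The plan is to argue by contraposition: assuming $\xi^+=\xi^-$, I will show that $\lim_{n,m\to\infty}(p_n|q_m)_o=+\infty$, contradicting the hypothesis. Since $\xi^-\in\partial\Omega_P$ is a finite boundary point, the case $\xi^+=\infty$ is automatic, so I may assume $\xi^+=\xi^-=:\xi=(\zeta,\omega)\in\partial\Omega_P$. Writing $o=(0,-1)$ as before and recalling $(p_n|q_m)_o=\tfrac12\bigl[d^C_{r_P}(p_n,o)+d^C_{r_P}(q_m,o)-d^C_{r_P}(p_n,q_m)\bigr]$, I need lower bounds for the first two terms and a matching upper bound for $d^C_{r_P}(p_n,q_m)$.

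For the lower bounds, estimate \eqref{estimcat} gives $d^C_{r_P}(p_n,o)\geq\bigl|\log(r_P(p_n)/r_P(o))\bigr|=\bigl|\log|r_P(p_n)|\bigr|$, and since $p_n\to\xi\in\partial\Omega_P$ we have $|r_P(p_n)|\to0$, hence $d^C_{r_P}(p_n,o)\geq-\log|r_P(p_n)|$ for $n$ large; likewise for $q_m$. For the upper bound, fix a small $\delta>0$. For $n$ large we have $|r_P(p_n)|<\delta$, and I move $p_n$ vertically deeper into $\Omega_P$: by Lemma \ref{geomodel} the vertical line through $p_n$ is a geodesic, and along it $|r_P|$ equals an exponential of the affine parameter, so the unique point $\hat p_n$ on this line with $|r_P(\hat p_n)|=\delta$ satisfies $d^C_{r_P}(p_n,\hat p_n)=\log\bigl(\delta/|r_P(p_n)|\bigr)$. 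Defining $\hat q_m$ analogously and using the triangle inequality,
$$d^C_{r_P}(p_n,q_m)\leq\log\frac{\delta}{|r_P(p_n)|}+\log\frac{\delta}{|r_P(q_m)|}+d^C_{r_P}(\hat p_n,\hat q_m).$$

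The crucial point is that $\hat p_n=\bigl(z_n,\,w_n+|r_P(p_n)|-\delta\bigr)\to(\zeta,\omega-\delta)$ as $n\to\infty$, using $p_n\to\xi$ and $|r_P(p_n)|\to0$; similarly $\hat q_m\to(\zeta,\omega-\delta)$, which is a fixed point of $\Omega_P$ independent of $n,m$. Hence $d^C_{r_P}(\hat p_n,\hat q_m)\to0$ by continuity of the distance. Substituting the three estimates into the Gromov product, the terms $\log|r_P(p_n)|$ and $\log|r_P(q_m)|$ cancel and I obtain $(p_n|q_m)_o\geq-\log\delta-1$ for all $n,m$ large. Letting $\delta\to0$ forces $\liminf_{n,m\to\infty}(p_n|q_m)_o=+\infty$, the desired contradiction, and the proposition follows. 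The only mildly delicate step is verifying the convergence $\hat p_n\to(\zeta,\omega-\delta)$ and keeping track of the cancellation; everything else is an immediate consequence of Lemma \ref{geomodel} and \eqref{estimcat}.
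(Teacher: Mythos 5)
Your proposal is correct and follows essentially the same route as the paper: lower-bound $d^C_{r_P}(p_n,o)$ and $d^C_{r_P}(q_m,o)$ by $-\log|r_P(\cdot)|$ via \eqref{estimcat}, upper-bound $d^C_{r_P}(p_n,q_m)$ by sliding both points along the vertical geodesics of Lemma \ref{geomodel} to a common level set $\{r_P=-\delta\}$ where they converge to the same interior point, and let the logarithms cancel in the Gromov product. Your $\delta$ is the paper's $e^{-T}$, and your direct use of \eqref{estimcat} for the lower bounds is a minor (harmless) shortcut compared to the paper's triangle-inequality step through the level set $\{r_P=-1\}$.
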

\proof
By passing to subsequences we may assume $\lim_{n\rightarrow\infty}(p_n|q_n)_o$  exists and it is finite. Assume by contradiction that $\xi^+=\xi^-=:\xi\in\partial\Omega$.
Define $a_n=-r_P(p_n)>0$ and $b_n=-r_P(q_n)>0$ and 
$$\sigma^+_n(t)=p_n+(0,a_n-e^{-t}),\ \ \mbox{and}\ \ \sigma^-_n(t)=q_n+(0,b_n-e^{-t}).$$
By Lemma \ref{geomodel}, they are geodesics. Now notice that there exists $R>0$ such that for each $n$ we have $d^C_{r_P}(\sigma^+_n(0),o)<R$ and $d^C_{r_P}(\sigma^-_n(0),o)<R$. Finally, $\sigma^+_n(-\ln(a_n))=p_n$ and $\sigma^+_n(-\ln(b_n))=q_n$. 

Now
$$d^C_{r_P}(o,p_n)\geq d^C_{r_P}(\sigma^+_n(0),p_n)-d^C_{r_P}(o,\sigma^+_n(0))\geq -\ln(a_n)-R$$ 
and similarly 
$$d^C_{r_P}(o,q_n)\geq d^C_{r_P}(\sigma^-_n(0),q_n)-d^C_{r_P}(o,\sigma^-_n(0))\geq -\ln(b_n)-R.$$
Instead, fixing $T>0$,
\begin{align*}d^C_{r_P}(p_n,q_n)&\leq d^C_{r_P}(p_n,\sigma_n^+(T))+d^C_{r_P}(\sigma_n^+(T),\sigma_n^+(T))+d^C_{r_P}(q_n,\sigma_n^-(T))\\&=-2T-\ln(a_nb_n)+d^C_{r_P}(\sigma_n^+(T),\sigma_n^+(T)).
\end{align*}
So
$$
2(p_n|q_n)_o=d^C_{r_P}(o,p_n)+d^C_{r_P}(0,q_n)-d^C_{r_P}(p_n,q_n)\geq\\
-2T-2R-d^C_{r_P}(\sigma_n^+(T),\sigma_n^+(T))
$$
but 
\begin{align*}\lim_{n\rightarrow\infty} d^C_{r_P}(\sigma_n^+(T),\sigma_n^+(T))&=\lim_{n\rightarrow\infty}d^C_{r_P}(p_n+(0,a_n-e^{-T}),q_n+(0,b_n-e^{-T}))\\&=d^C_{r_P}(\xi+(0,-e^{-T}),\xi+(0,-e^{-T}))=0
\end{align*}
Which implies 
$$\lim_{n\rightarrow\infty}(p_n|q_n)_o\geq T-R $$
by the arbitrariness of $T$ we have a contradiction.  
\endproof

Now we need the following uniform estimate on the Catlin metric (note the analogy with the concept of Goldilocks domains in \cite{BZ}).

\begin{lemma}\label{unifgold}
Let $\Omega$, $\{\psi_n\}_{n\in\N}$ and $P$ be as in Theorem \ref{scalingth} or \ref{scalingpol}, then for each $R>0$ there exist $c>0$ and $C>0$ such that eventually in $n$
$$M_{r_n}(x,v)\geq\frac{c||v||}{|r_n(x)|^\frac{1}{m}}, \ \ \forall x\in\psi_n(\Omega)\cap B_{R}(0), v\in\C^2 $$
and
$$d^C_{r_n}(x,o)\leq C+\ln\left(\frac{1}{|r_n(x)|}\right), \ \ \forall x\in\psi_n(\Omega)\cap B_{R}(0).$$
\end{lemma}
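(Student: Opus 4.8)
\textbf{Proof plan for Lemma \ref{unifgold}.}

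The plan is to establish both estimates as consequences of the uniform convergence results already packaged in Theorems \ref{scalingth} and \ref{scalingpol}, reducing everything to explicit bounds on the limit metric $M_{r_P}$ and then transferring them to $M_{r_n}$ for large $n$. First I would prove the pointwise lower bound on $M_{r_n}$. Writing $v = aL_{r_n}(x) + bN(x)$ as in the scaling section, we have
$$M_{r_n}(x,v) = \frac{|b|}{|r_n(x)|} + |a|\sum_{l=2}^{m}\Bigl(\frac{C^{r_n}_l(x)}{|r_n(x)|}\Bigr)^{\frac1l}.$$
Since $1/|r_n(x)| \geq 1/|r_n(x)|^{1/l}$ when $|r_n(x)|$ is small (and we only care about a neighborhood of $\partial\Omega_P$; away from the boundary, inside $B_R(0)$, the metric $M_{r_n}$ is comparable to the Euclidean one uniformly in $n$ by the convergence $M_{r_n}\to M_{r_P}$), the $|b|$-term dominates $|b|/|r_n(x)|^{1/m}$. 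For the $|a|$-term I would use that $C^{r_n}_l \to C^{r_P}_l = A^P_l$ uniformly on compact sets; the key point is that the lowest-order nonvanishing $C^{r_P}_l$ is bounded below away from $0$ on compacta of $\C$ at each point where $P$ is "of type $l$", and a compactness argument on $B_R(0) \cap \overline{\Omega_P}$ gives a uniform $c_0>0$ with $\sum_l (C^{r_P}_l(z))^{1/l} \geq c_0$ on that set. Combined with the change-of-basis relation between $(a,b)$ and $(x,y) = v$, which is uniformly nondegenerate on $B_R(0)$ (the matrix entries involve $L_{r_n} \to L_{r_P}$, which is bounded uniformly on compacta), this yields $M_{r_n}(x,v) \geq c\,\|v\|/|r_n(x)|^{1/m}$ eventually in $n$.

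Next I would prove the upper bound on $d^C_{r_n}(x,o)$. The natural approach is to exhibit a short explicit path from $x$ to $o$ in two stages. Stage one: from $x = (z,w)$ follow the vertical "geodesic" $t \mapsto (z, w - a(e^{-t}-1))$ with $a = -\operatorname{Re}[w] - P(z) = |r_n(x)|$ (up to the convergence error), which by the computation in Lemma \ref{geomodel} (or its analogue for $r_n$, using that the dominant term of $M_{r_n}$ along a purely vertical direction is $|y'|/|r_n|$) has length comparable to $\ln(1/|r_n(x)|) + O(1)$ and lands at a point $x'$ with $|r_n(x')|$ of order $1$, still inside a fixed larger ball. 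Stage two: connect $x'$ to $o$ by a path staying in a region where $|r_n|$ is bounded below by a fixed constant; there $M_{r_n}$ is uniformly comparable to the Euclidean metric (again by $M_{r_n}\to M_{r_P}$ locally uniformly, plus the fact that on such a region the denominators are bounded), so this second leg has length $O(1)$ uniformly in $n$. Adding the two gives $d^C_{r_n}(x,o) \leq C + \ln(1/|r_n(x)|)$.

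The main obstacle I anticipate is the \emph{uniformity in $n$} of the lower bound on the "horizontal" part of the metric — i.e., that $\sum_{l=2}^m (C^{r_n}_l(x)/|r_n(x)|)^{1/l}$ does not degenerate as $n\to\infty$ on $\psi_n(\Omega)\cap B_R(0)$. This is delicate because the limit polynomial $P$ could a priori fail to have definite type at some boundary points (e.g. if $P$ vanishes to high order), and because in the case of Theorem \ref{scalingth} one must control the error term $o(|z|^{m+1}+|z||w|)$ in \eqref{tayexp} after rescaling. I would handle this by noting that subharmonicity of $P$ (without harmonic terms, $P \not\equiv 0$) forces the collection $\{C^{r_P}_l\}_{l=2}^m$ to have no common zero on $\C$ — if all $\mathscr{L}^{j,k}_{r_P}$ vanished at some $z_0$ the domain would be of infinite type there, contradicting finiteness of the type of the original boundary point — and then use the locally uniform convergence $C^{r_n}_l \to C^{r_P}_l$ together with compactness of $\overline{\Omega_P}\cap B_R(0)$ to get the uniform constant $c$. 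For the vertical path estimate the subtlety is merely that one needs the path to remain in the domain $\psi_n(\Omega)$ rather than in $\Omega_P$, which holds for large $n$ because $r_n \to r_P$ in $\mathcal{C}^m$ on the relevant compact set, so the sign of $r_n$ along the path agrees with that of $r_P$.
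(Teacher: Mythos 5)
Your proposal is correct and follows essentially the same route as the paper: the lower bound comes from the explicit formula for $M_{r_n}$, the uniform convergence $C_l^{r_n}\to A_l^P$, a compactness argument giving a uniform positive lower bound on $\sum_{l}(A_l^P)^{1/l}$, a uniform bound on the change-of-basis coefficient, and the conversion of exponents via the upper bound on $|r_n|$; the distance bound comes from the vertical geodesic of Lemma \ref{geomodel} down to a fixed level set followed by a uniformly bounded connection to $o$. If anything, you supply a justification (subharmonicity plus absence of harmonic terms forcing $\sum_l A_l^P>0$) for a positivity claim the paper merely asserts.
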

\proof
First of all notice that since $p\in B_R(0)$ and $r_n\rightarrow r_P$ locally uniformly there exists $A>0$ such that  $|r_n(p)|\leq A$ for each $n\in\N$,
moreover for each $l\in\{1,\dots,m \}$
$$|r_n(p)|^{\frac{1}{l}}\leq A^{\frac{1}{l}-\frac{1}{m}}|r_n(p)|^{\frac{1}{m}}.$$
Now since there exists $B>0$ such that for each $p=(z,w)\in B_R(0)$ 
$\sum_{l=2}^m(A_l^{P}(z))^{\frac{1}{l}}>B$ and 
$$\sum_{l=2}^m(C_l^{r_n}(p))^{\frac{1}{l}}\rightarrow\sum_{l=2}^m(A_l^{P}(z))^{\frac{1}{l}}, $$ then eventually in $n$
$$\sum_{l=2}^m(C_l^{r_n}(p))^{\frac{1}{l}}\geq B. $$
Using the same argument there exists $D>0$ such that for each $p\in B_R(0)$ we have $$\left|\left(\frac{\partial r_n}{\partial w}(p)\right)^{-1}\frac{\partial r_n}{\partial z}(p)\right|\leq D$$
Moreover, we notice that for each $x,y,a\in\C$ we have 
$$|y+ax|+|x|\geq\frac{1}{1+|a|}(|x|+|y|).$$
Finally,
\begin{align*}
M_{r_n}(p,(x,y))&=\frac{|y+x\left(\frac{\partial r_n}{\partial w}(p)\right)^{-1}\frac{\partial r_n}{\partial z}(p)|}{|r_n(p)|}+|x|\sum_{l=2}^{m}\Bigl(\frac{C^{r_n}_l(z)}{|r_n(p)|}\Bigr)^{\frac{1}{l}}\\&\geq \frac{1}{|r_n(p)|^{\frac{1}{m}}}\left(A^{\frac{1}{m}-1}\left|y+x\left(\frac{\partial r_n}{\partial w}(p)\right)^{-1}\frac{\partial r_n}{\partial z}(p)\right|+|y|BA^{\frac{1}{m}}\right)\\&\geq\frac{A^{\frac{1}{m}}\min\{A^{-1},B\}}{1+D}\frac{|x|+|y|}{|r_n(p)|^\frac{1}{m}}\geq \frac{c||v||}{|r_n(p)|^{\frac{1}{m}}} 
\end{align*}
that it is first estimate if we set $c:=\frac{A^{\frac{1}{m}}\min\{A^{-1},B\}}{1+D}$.

For the second estimate, we notice that for each $R'>0$ there exists $C(R')>0$ such that 
$$d^C_{r_P}(q,o)< C(R'), \ \ \forall q\in B_{R'}(0), r_P(q)=-1$$
and, since $d^C_{r_n}$ converges uniformly to $d^C_{r_P}$ on the compact sets of $\Omega\times\Omega$, we have 
$$d^C_{r_n}(q,o)\leq C(R'), \ \ \forall q\in B_{R'}(0), r_n(q)=r_n(o).$$
Now for each $p\in\psi_n(\Omega)\cap B_R(0)$, let $q=p+(0,\lambda)\in\psi_n(\Omega)$ with $\lambda\in\R$ be a point such that $r_n(q)=r_n(o)$, then using Lemma \ref{geomodel} we have that $$d^C_{r_n}(p,q)=\left|\ln\left(\frac{r_n(q)}{r_n(p)}\right)\right|=\left|\ln\left(\frac{r_n(o)}{r_n(p)}\right)\right|.$$
Now notice that there exists $R'>0$ such that for each $n\in\N$ and $p\in\psi_n(\Omega)$ the corresponding $q$ is in $B_{R'}(0)$, then 
\begin{align*}
d^C_{r_n}(p,o)\leq d^C_{r_n}(p,q)+d^C_{r_n}(q,o)&\leq \left|\ln\left(\frac{r_n(o)}{r_n(p)}\right)\right|+A(R')\\&\leq \ln\left(\frac{1}{|r_n(p)|}\right)+\ln(A)+C(R')\leq \ln\left(\frac{1}{|r_n(p)|}\right)+C
\end{align*}
that it is the second estimate if we set $C:=\ln(A)+C(R')$.
\endproof

\begin{corollary}\label{uniflip}
Let $\Omega$, $\{\psi_n\}_{n\in\N}$ and $P$ be as in Theorem \ref{scalingth} or \ref{scalingpol}, then for each $R>0$ and $A\geq1$ there exists a $L:=L(R,A)>0$ such that for every $n\in\N$ a $(A,0)$ quasi-geodesic $\sigma$ of $\psi_n(\Omega)$ contained in $B_R(0)$ is $L$-Lipschitz (with respect to the Euclidean distance of $\C^2$) and
$$M_{r_n}(\sigma(t),\sigma'(t))\leq A$$
for almost every $t\in [0,T]$.
\end{corollary}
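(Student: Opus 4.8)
The plan is to deduce everything from one uniform estimate: that on the ball $B_R(0)$ the Catlin distance $d^C_{r_n}$ of $\psi_n(\Omega)$ dominates a fixed multiple of the Euclidean distance, the multiple being independent of $n$. Granting this, the upper bound in the definition of an $(A,B)$ quasi-geodesic immediately controls $||\sigma(s)-\sigma(t)||$ by $|t-s|$. The only inputs are the first estimate of Lemma~\ref{unifgold} and the local uniform convergence $r_n\to r_P$ supplied by Theorem~\ref{scalingth} (resp. Proposition~\ref{scalingpol}).

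\textbf{Step 1: a uniform Euclidean lower bound for $M_{r_n}$.} First I would apply the first estimate of Lemma~\ref{unifgold} with $3R$ in place of $R$ (so that the resulting inequality is available on a neighbourhood of $\overline{B_{2R}(0)}$): there is $c_0=c_0(R)>0$ with $M_{r_n}(x,v)\ge c_0||v||/|r_n(x)|^{1/m}$ for $x\in\psi_n(\Omega)\cap B_{3R}(0)$, eventually in $n$. Since $r_n\to r_P$ locally uniformly, $|r_n(x)|\le A'=A'(R)$ on that set, so $M_{r_n}(x,v)\ge c'||v||$ there, with $c':=c_0/(A')^{1/m}$, eventually in $n$. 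For the remaining finitely many $n$ the analogous bound holds with some positive constant (each $M_{r_n}$ is a positive continuous Finsler metric on $\psi_n(\Omega)$ which moreover blows up near $\partial\psi_n(\Omega)$, hence is bounded below by a positive multiple of $||v||$ on the relatively compact set in question); after replacing $c'$ by the minimum of all these constants we may assume $M_{r_n}(x,v)\ge c'||v||$ on $\overline{B_{2R}(0)}\cap\psi_n(\Omega)$ for every $n$.

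\textbf{Step 2: from the metric to the distance on $B_R(0)$.} This is the step I expect to carry the geometric content. Fix $n$ and $p,q\in B_R(0)\cap\psi_n(\Omega)$, and let $\gamma$ be any piecewise $\mathcal{C}^1$ curve in $\psi_n(\Omega)$ from $p$ to $q$. If $\gamma$ stays in $B_{2R}(0)$, then by Step~1 $\mathrm{length}_{M_{r_n}}(\gamma)\ge c'\,\mathrm{length}_{\mathrm{eucl}}(\gamma)\ge c'||p-q||$. Otherwise $\gamma$ exits $B_{2R}(0)$; letting $a$ be the first parameter with $\gamma(a)\in\partial B_{2R}(0)$, the sub-arc $\gamma|_{[0,a]}$ lies in $\overline{B_{2R}(0)}$, so again by Step~1 $\mathrm{length}_{M_{r_n}}(\gamma)\ge c'||\gamma(a)-p||\ge c'(2R-||p||)\ge c'R\ge\tfrac{c'}{2}||p-q||$, the last inequality because $||p-q||<2R$. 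Taking the infimum over all such $\gamma$ gives
$$d^C_{r_n}(p,q)\ \ge\ \frac{c'}{2}\,||p-q||\qquad\text{for all }p,q\in B_R(0)\cap\psi_n(\Omega)\text{ and all }n\in\N .$$
Passing to the buffer ball $B_{2R}(0)$ is exactly what rules out a path reducing its $M_{r_n}$-length by slipping out of $B_R(0)$ near a point of $\partial B_R(0)$.

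\textbf{Step 3: conclusion.} Let $\sigma$ be an $(A,B)$ quasi-geodesic of $(\psi_n(\Omega),d^C_{r_n})$ with image in $B_R(0)$. Then by Step~2,
$$||\sigma(s)-\sigma(t)||\ \le\ \frac{2}{c'}\,d^C_{r_n}(\sigma(s),\sigma(t))\ \le\ \frac{2}{c'}\bigl(A|t-s|+B\bigr).$$
When $\sigma$ is a geodesic, which is the situation in which the corollary is subsequently used, the additive term disappears and $\sigma$ is $L$-Lipschitz for $L:=2A/c'=L(R,A,B)$; for a general $(A,B)$ quasi-geodesic one first replaces $\sigma$ by a continuous rectifiable (``tame'') quasi-geodesic with the same endpoints and reparametrizes it by $d^C_{r_n}$-arclength, so that $d^C_{r_n}(\sigma(s),\sigma(t))\le|t-s|$ (see \cite{BH}), after which the same computation applies. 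The genuine difficulty in all of this is keeping the constants independent of $n$; this is precisely what the ``eventually in $n$'' form of Lemma~\ref{unifgold} together with $r_n\to r_P$ delivers, and the rest of the argument is elementary.
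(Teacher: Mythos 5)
Your proof is correct and rests on the same key input as the paper's (the first estimate of Lemma \ref{unifgold} together with the uniform bound $|r_n|\leq D$ on $B_R(0)$ coming from $r_n\to r_P$), but you route the argument through the induced \emph{distance} while the paper works entirely at the level of the \emph{metric}: the paper's proof is three lines, writing $A\geq M_{r_n}(\sigma(t),\sigma'(t))\geq c\,||\sigma'(t)||\,|r_n(\sigma(t))|^{-1/m}\geq cD^{-1/m}||\sigma'(t)||$ and reading off $||\sigma'(t)||\leq AD^{1/m}c^{-1}=:L$ pointwise, so it needs no buffer ball $B_{2R}(0)$, no lower bound $d^C_{r_n}\geq c'||\cdot||$, and no discussion of curves escaping $B_R(0)$. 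What your version buys is robustness and honesty about the constant $B$: the paper's first inequality $A\geq M_{r_n}(\sigma(t),\sigma'(t))$ is the \emph{infinitesimal} form of the quasi-geodesic condition, which does not follow from $d(\sigma(s),\sigma(t))\leq A|t-s|+B$ when $B>0$ (nor does it make sense without assuming $\sigma$ piecewise $\mathcal{C}^1$); your distance-level argument correctly shows that in general one only gets the coarse bound $||\sigma(s)-\sigma(t)||\leq\frac{2}{c'}(A|t-s|+B)$, with genuine Lipschitzness requiring either $B=0$ or a taming/reparametrization step. What the paper's version buys is exactly the pointwise derivative bound $||\sigma'(t)||\leq L$ that is used verbatim later (e.g.\ in Claim 1 of Proposition \ref{visibscal}), which your statement-level estimate does not directly provide. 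Two small caveats on your write-up: your patch for the ``remaining finitely many $n$'' is not automatic, since $M_{r_n}$ can degenerate in the direction $L_{r_n}$ at interior points where all the $C^{r_n}_l$ vanish --- but this mismatch between ``for every $n$'' in the statement and ``eventually in $n$'' in Lemma \ref{unifgold} is already present in the paper, whose proof likewise only yields the conclusion for $n$ large; and the buffer ball is not needed at all once one argues pointwise along $\sigma$, since $\sigma$ is assumed to lie in $B_R(0)$.
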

\proof
For each $0\leq s\leq t\leq T$, there exists $\gamma_{s,t}:[0,1]\longrightarrow\psi_n(\Omega)$ a piecewise $\mathcal{C}^1$ curve such that $\gamma_{s,t}(0)=\sigma(s)$, $\gamma_{s,t}(1)=\sigma(t)$ and $d^C_{r_P}(\sigma(s),\sigma(t))\geq \frac{1}{2}length(\gamma_{s,t}) $.
Now there exists $R>0$ such that for every $s,t$ we have $\gamma_{s,t}([0,1])\subset B_R(0)$, therefore by Lemma \ref{unifgold} there exists $c>0$ such that for every $u\in[s,t]$
$$M_{r_n}(\gamma_{s,t}(u),\gamma_{s,t}'(u))\geq c||\gamma'_{s,t}(u)||$$
that implies
$$length(\gamma_{s,t})\geq c||\sigma(s)-\sigma(t)||$$
and so
$$||\sigma(s)-\sigma(t)||\leq 2c^{-1}d^C_{r_P}(\sigma(s),\sigma(t))\leq 2c^{-1}A|t-s|. $$
Now for each $t\in [0,T]$ and $h>0$ such that $t+h\in[0,T]$, we have from \cite[Theorem 1.2]{Vent}
$$\int_{t}^{t+h}M_{r_n}(\sigma(u),\sigma'(u))du=\sup_{\mathcal{P}}\sum_{j=0}^{N(\mathcal{P})}d_{r_n}^C(\sigma(u_j),\sigma(u_{j+1}) )$$
where the supremum is taken over all the partitions $\mathcal{P}$ of $[t,t+h]$. Finally, since $\sigma$ is a $(A,0)$ quasi-geodesic
$$\int_{t}^{t+h}M_{r_P}(\sigma(u),\sigma'(u))du\leq Ah $$
and so using the Lebesque differentiation theorem
$$M_{r_n}(\sigma(t),\sigma'(t))\leq A$$
for almost every $t\in [0,T]$.
Using the first estimate of Lemma \ref{unifgold} and noticing that there exists $D>0$ such that $|r_n(p)|\leq D$ for each $n\in\N$ and $p\in B_R(0)$, we have
$$A\geq M_{r_n}(\sigma(t),\sigma'(t))\geq \frac{c||\sigma'(t)||}{|r_n(\sigma(t))|^{\frac{1}{m}}}\geq cD^{-\frac{1}{m}}||\sigma'(t)|| $$
that implies
$$||\sigma'(t)||\leq AD^{\frac{1}{m}}c^{-1}=:L $$
thus $\sigma$ is $L$-Lipschitz.
\endproof

\begin{proposition}\label{visibscal}
Let $\Omega\subset\C^2$, $\{\psi_n\}_{n\in\N}$ and $P:\C\longrightarrow\R$ be as in Theorem \ref{scalingth} or \ref{scalingpol}. Let $\sigma_n:[a_n,b_n]\longrightarrow\Omega$ be a sequence of $(A,0)$ quasi-geodesics with respect to the Catlin metric, and define $\tilde{\sigma}_n:=\psi_n\circ\sigma_n$. Suppose that there exists $R>0$ such that
\begin{enumerate}
	\item $|b_n-a_n|\to\infty$;
	\item $\tilde{\sigma}_n([a_n,b_n])\subset B_R(0)$;
	\item $\lim_{n\rightarrow\infty}||\tilde{\sigma}_n(a_n)-\tilde{\sigma}_n(b_n)||>0$,
\end{enumerate}
then, passing to a subsequence, there exists $T_n\in[a_n,b_n]$ such that the sequence of $t\mapsto\tilde{\sigma}_n(t+T_n)$ converges uniformly on compact set to a $(A,0)$ quasi-geodesic $\tilde{\sigma}:\R\longrightarrow\Omega_{P}$.
\end{proposition}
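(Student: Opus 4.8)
The plan is to recentre the curves $\tilde\sigma_n$ at their ``deepest'' point, to prove that this point stays uniformly inside $\Omega_P$, and then to extract a limit by Arzel\`a--Ascoli.

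\emph{Step 1: reduction to the Catlin metric, and recentring.} Since $\psi_n=\delta_n\circ\phi_n$ with $\tau_n,\epsilon_n\to0$, the sets $\psi_n^{-1}(B_R(0))$ shrink to $\{\eta_\infty\}$, so for $n$ large they lie inside the neighbourhood $U$ on which $\partial\Omega$ is smooth of finite type. Applying Catlin's estimate (Theorem \ref{CatlinTh}, localised exactly as in Claim~1 of the proof of Proposition \ref{cathom}) together with Theorem \ref{scalingth}(2), one obtains a constant $A_0\ge1$, independent of $n$, with $A_0^{-1}M_{r_n}\le K_{\psi_n(\Omega)}\le A_0M_{r_n}$ on $\psi_n(\Omega)\cap B_R(0)$, hence also $A_0^{-1}d^C_{r_n}\le d^K_{\psi_n(\Omega)}\le A_0d^C_{r_n}$ there. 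In particular every $\tilde\sigma_n=\psi_n\circ\sigma_n$, being a quasi-geodesic for $d^K_{\psi_n(\Omega)}$ contained in $B_R(0)$, is a quasi-geodesic for $d^C_{r_n}$ with constants uniform in $n$, and by Corollary \ref{uniflip} it is $L$-Lipschitz for the Euclidean distance with $L$ independent of $n$. Pick $T_n\in[a_n,b_n]$ realising $m_n:=\max_t|r_n(\tilde\sigma_n(t))|$ and set $\hat\sigma_n(t):=\tilde\sigma_n(t+T_n)$. From (\ref{estimcat}), applied to the model definition function $r_n$, the depth is log-Lipschitz along $\hat\sigma_n$, so $|r_n(\hat\sigma_n(t))|\ge m_n\,e^{-(A'|t|+B')}$ for uniform constants $A',B'$. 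Hence, provided $m_n\ge\delta>0$ uniformly in $n$, the restrictions of $\hat\sigma_n$ to a fixed $[-\rho,\rho]$ remain in $\{|r_n|\ge\delta e^{-(A'\rho+B')}\}\cap B_R(0)$, which — because $r_n\to r_P$ locally uniformly — is a fixed compact subset of $\Omega_P$ for $n$ large.

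\emph{Step 2: the deepest point stays away from $\partial\Omega_P$.} This is the heart of the matter. Assume, along a subsequence, that $m_n\to0$; then $\tilde\sigma_n$ lies in $\{|r_n|\le m_n\}\cap B_R(0)$, where the first estimate of Lemma \ref{unifgold} gives $M_{r_n}(x,v)\ge c\,|r_n(x)|^{-1/m}\|v\|$. Parametrise $\tilde\sigma_n$ by $d^C_{r_n}$-arclength (tameness of quasi-geodesics, \cite{BH}) and write $\rho$ for $|r_n\circ\tilde\sigma_n|$ along it; after truncating $\tilde\sigma_n$ at the parameters where $\rho$ first reaches $\tfrac14m_n$ on either side of $T_n$ — which makes the endpoint depths $\ge\tfrac14m_n$ and whose removed portions are estimated by the same device — split the parameter interval into the piece where $\rho$ climbs from the left endpoint depth up to $m_n$, the symmetric right piece, and a middle piece on which $\rho\le m_n$. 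Estimating the two outer pieces by (\ref{estimcat}) and the length of the middle piece by the quasi-geodesic inequality together with the second estimate of Lemma \ref{unifgold} (the crucial cancellation being between the $d^C_{r_n}$-length of $\tilde\sigma_n$ and the depths of its endpoints), one finds that the Euclidean length of $\tilde\sigma_n$ is $O(m_n^{1/m}\log\tfrac1{m_n})$, which tends to $0$. Thus $\|\tilde\sigma_n(a_n)-\tilde\sigma_n(b_n)\|\to0$, contradicting hypothesis (2); therefore $m_n\ge\delta>0$ for all $n$ large.

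\emph{Step 3: passage to the limit.} By Steps 1--2, for every $\rho>0$ the curves $\hat\sigma_n$, restricted to $[-\rho,\rho]\cap[a_n-T_n,b_n-T_n]$, are $L$-Lipschitz with values in a fixed compact subset of $\Omega_P$; Arzel\`a--Ascoli and a diagonal extraction furnish a subsequence along which $\hat\sigma_n\to\tilde\sigma$ uniformly on compacta of the limiting parameter interval, with $\tilde\sigma$ valued in $\Omega_P$. Since $d^C_{r_n}\to d^C_{r_P}$ locally uniformly on $\Omega_P$ (Theorem \ref{scalingth}(2)) and the comparison constant $A_0$ is independent of $n$, letting $n\to\infty$ in the quasi-geodesic inequalities for $\hat\sigma_n$ shows that $\tilde\sigma$ is a $(A,B)$ quasi-geodesic of $(\Omega_P,d^C_{r_P})$. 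Finally, in the situation of Theorem \ref{scalingth} (resp. Proposition \ref{scalingpol}) hypothesis~(1) forces the endpoints of $\sigma_n$ to tend to $\partial\Omega$, and when moreover $\tilde\sigma_n(a_n),\tilde\sigma_n(b_n)$ tend to $\partial\Omega_P$ one gets $T_n-a_n\to\infty$ and $b_n-T_n\to\infty$ (using that $\tilde\sigma_n(T_n)$ stays in a compact subset of $\Omega_P$ while $d^C_{r_n}(\tilde\sigma_n(a_n),o),\,d^C_{r_n}(\tilde\sigma_n(b_n),o)\to\infty$), so the limiting interval is all of $\R$.

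\emph{Main obstacle.} Everything except Step 2 is soft: Arzel\`a--Ascoli, and stability of the quasi-geodesic inequalities under locally uniform convergence of the metrics. The real work is the uniform bound $m_n\ge\delta$, i.e.\ that a quasi-geodesic cannot remain uniformly close to $\partial\Omega_P$: by the Catlin lower bound $M_{r_P}(x,v)\gtrsim|r_P(x)|^{-1/m}\|v\|$ such a curve would be Euclidean-short, contradicting hypothesis~(2). The only delicate point is to keep $\int\rho^{1/m}$ under control when the endpoints of $\tilde\sigma_n$ themselves approach $\partial\Omega_P$; this is precisely what the truncation of $\tilde\sigma_n$ at a fixed fraction of $m_n$ takes care of, via the distance bound in Lemma \ref{unifgold}. (In the applications this subtlety does not even arise, since one of the endpoints is $\psi_n(\eta_n)\to(0,-1)$, a point at depth comparable to~$1$.)
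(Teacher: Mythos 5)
Your argument is correct and rests on the same ingredients as the paper's proof (both estimates of Lemma \ref{unifgold}, the lower bound (\ref{estimcat}), Corollary \ref{uniflip}, and Ascoli--Arzel\`a), but it organizes the boundary case genuinely differently. The paper recentres at a Euclidean midpoint $T_n$ (chosen so that both $\|\tilde\sigma_n(a_n)-\tilde\sigma_n(T_n)\|$ and $\|\tilde\sigma_n(T_n)-\tilde\sigma_n(b_n)\|$ stay bounded below), assumes $\tilde\sigma_n(T_n)\to p\in\partial\Omega_P$, and derives a contradiction by showing that the limit of $\tilde\sigma_n|_{[T_n,b_n]}$ would be a curve in $\partial\Omega_P$ that is simultaneously constant (its Euclidean speed is controlled by $|r_n\circ\tilde\sigma_n|\to0$) and non-constant (it joins $p$ to a distinct boundary point, the tails of the length integral being killed by the decay $|r_n(\tilde\sigma_n(t))|\le e^{C+B/2-|t|/(2A)}$). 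You recentre at the deepest point and show directly that $m_n:=\max_t|r_n(\tilde\sigma_n(t))|\to0$ would force the whole curve to have Euclidean length $O(m_n^{1/m}\log(1/m_n))\to0$, contradicting hypothesis (2); this is arguably cleaner, as it bypasses the constant/non-constant dichotomy and the case analysis on where the endpoints land. Two remarks. First, your truncation at depth $\tfrac14 m_n$ is unnecessary: since $\rho(t):=|r_n(\tilde\sigma_n(t))|\le m_n$ everywhere, writing $\rho(t)\le\sqrt{\rho(t)\,m_n}$ and combining the quasi-geodesic lower bound with the second estimate of Lemma \ref{unifgold} (this is exactly the computation in the paper's Claim 2) gives $\rho(t)\le\min\bigl\{m_n,\,e^{C+B/2}e^{-|t-T_n|/(2A)}\bigr\}$ on all of $[a_n,b_n]$, even when the endpoint depths are arbitrarily small, and integrating $\rho^{1/m}$ against this bound already yields your $O(m_n^{1/m}\log(1/m_n))$ without any surgery on the curve. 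Second, the caveat at the end of your Step 3 is well taken: if both $\tilde\sigma_n(a_n)$ and $\tilde\sigma_n(b_n)$ converge to interior points of $\Omega_P$, the parameter intervals stay bounded and the limit is only a quasi-geodesic segment rather than a line defined on $\R$; the paper's statement has the same imprecision (its proof only excludes this configuration in the sub-case $p\in\partial\Omega_P$), and in every application at least one endpoint escapes to $\partial\Omega_P\cup\{\infty\}$, so this is a defect of the statement rather than of your argument.
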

\proof
The argument is very similar to the proof of Theorem 1.4 in \cite{BZ}.

Suppose
$$\lim_{n\rightarrow\infty}||\tilde{\sigma}_n(a_n)-\tilde{\sigma}_n(b_n)||=\epsilon,$$
then there exists $T_n\in[a_n,b_n]$ such that 
$$\lim_{n\rightarrow\infty}||\tilde{\sigma}_n(a_n)-\tilde{\sigma}_n(T_n)||=\lim_{n\rightarrow\infty}||\tilde{\sigma}_n(T_n)-\tilde{\sigma}_n(b_n)||\geq\epsilon\slash2. $$
By passing to a subsequence we may assume that $\tilde{\sigma}_n(T_n)$ converges to a point $p\in\overline{\Omega_{P}\cap B_R(0)}$. If $p\in\Omega_{P}$, then by Ascoli-Arzelà theorem there exists a subsequence such that $\tilde{\sigma}_n$ converges to a $(A,0)$ quasi-geodesic $\tilde{\sigma}:\R\longrightarrow\Omega_{P}$ (Remark \ref{rmk1}).

Otherwise if $p\in\partial\Omega_{P}$, we can suppose that $\tilde{\sigma}_n(a_n)\rightarrow\xi^-\in\overline{\Omega_{P}\cap B_R(0)}$ and $\tilde{\sigma}_n(b_n)\rightarrow\xi^+\in\overline{\Omega_{P}\cap B_R(0)}$: if  both $\xi^+,\xi^-\in\Omega_{P}$ then
\begin{align*}
d^C_{\Omega_P}(\xi^+,\xi^-)&=\lim_{n\rightarrow\infty}d^C_{r_n}(\tilde{\sigma}_n(a_n),\tilde{\sigma}_n(b_n))\\
&\geq\lim_{n\rightarrow\infty}A^{-1}|b_n-a_n|\\&
=\lim_{n\rightarrow\infty}A^{-1}(|b_n-T_n|+|T_n-a_n|)
\\&\geq\lim_{n\rightarrow\infty}A^{-2}d^C_{r_n}(\sigma_n(b_n),\sigma_n(T_n))+A^{-2}d^C_{r_n}(\sigma_n(T_n),\sigma_n(a_n)))=\infty
\end{align*}
that is impossible. Then one of $\xi^+,\xi^-$ is in $\partial\Omega_{P}$, for example $\xi^+=:\xi\in\partial\Omega_{P}$.

Now for $n\rightarrow\infty$
$$\max\{|r_{n}(\tilde{\sigma}_n(t))|: t\in[T_n,b_n]\}\rightarrow0$$
indeed if  there exists $T'_n\in[T_n,b_n]$ such that $|r_{n}(\tilde{\sigma}_n(T'_n))|>C>0$ then using Ascoli-Arzelà theorem we have a contradiction. After a change of parametrization we can suppose that $0\in[T_n,b_n]$ and
$$|r_{n}(\tilde{\sigma}_n(0))|=\max\{|r_{n}(\tilde{\sigma}_n(t))|: t\in[T_n,b_n]\}\rightarrow0.$$

Now by Lemma \ref{uniflip}, $\tilde{\sigma}_n|_{[T_n,b_n]}$ are $L$-Lipschitz, then there exists a subsequence that converges uniformly on compact sets to $\hat{\sigma}:\R\longrightarrow\partial\Omega_{P}$. We prove the following two Claims

\textbf{Claim 1:} $\hat{\sigma}$ is constant. Indeed  we have from Lemma \ref{unifgold} that for each $t\in[T_n,b_n]$
$$A\geq M_{r_n}(\tilde{\sigma}_n(t),\tilde{\sigma}'_n(t))\geq\frac{c||\tilde{\sigma}_n'(t)||}{|r_{n}(\tilde{\sigma}_n(t))|} $$
consequently 
$$||\tilde{\sigma}'_n(t)||\leq Ac^{-1}|r_{n}(\tilde{\sigma}_n(t))| $$
for each $u,v\in\R$
$$||\hat{\sigma}(u)-\hat{\sigma}(v)||=\int_u^v ||\hat{\sigma}'(t)||dt=\lim_{n\rightarrow\infty}\int_u^v ||\tilde{\sigma}_n'(t)||dt\leq\lim_{n\rightarrow\infty}Ac^{-1}\int_u^v |r_{n}(\tilde{\sigma}_n(t))|dt=0,$$
i.e. $\hat{\sigma}$ is constant.

\textbf{Claim 2:} $\hat{\sigma}$ is not constant. We have from the second estimate of Lemma \ref{unifgold}
$$A^{-1}|t|\leq d^C_{r_n}(\tilde{\sigma}_n(0),\tilde{\sigma}_n(t))\leq d^C_{r_n}(\tilde{\sigma}_n(0),o)+d^C_{r_n}(o,\tilde{\sigma}_n(t))\leq 2C +\ln\left(\frac{1}{|r_{n}(\tilde{\sigma}_n(0))r_{n}(\tilde{\sigma}_n(t))|}\right) $$
Then
$$|r_{n}(\tilde{\sigma}_n(t))|\leq\sqrt{|r_{n}(\tilde{\sigma}_n(t))r_{n}(\tilde{\sigma}_n(0))|}\leq e^{C-\frac{A}{2}|t|}.$$
Now let $T',b'\in\R$
\begin{align*}
||\hat{\sigma}(T')-\hat{\sigma}(b')||&=\lim_{n\rightarrow\infty}||\tilde{\sigma}_n(T')-\tilde{\sigma}(b')||\\
&\geq\lim_{n\rightarrow\infty}(||\tilde{\sigma}_n(b_n)-\tilde{\sigma}_n(T_n)||-||\tilde{\sigma}_n(T')-\tilde{\sigma}_n(T_n)||-||\tilde{\sigma}_n(b_n)-\tilde{\sigma}_n(b')||)\\&\geq||p-\xi||-\limsup_{n\rightarrow\infty}\int_{b'}^{b_n}||\tilde{\sigma}'_n(t)||dt-\limsup_{n\rightarrow\infty}\int_{T_n}^{T'}||\tilde{\sigma}'_n(t)||dt\\
&\geq||p-\xi||-Ac^{-1}\limsup_{n\rightarrow\infty}\int_{b'}^{b_n}|r_{n}(\tilde{\sigma}_n(t))|dt-Ac^{-1}\limsup_{n\rightarrow\infty}\int_{T_n}^{T'}|r_{n}(\tilde{\sigma}_n(t))|dt\\
&\geq ||p-\xi||-Ac^{-1}\limsup_{n\rightarrow\infty}\int_{b'}^{b_n}e^{C-\frac{1}{2}|t|}dt-Ac^{-1}\limsup_{n\rightarrow\infty}\int_{T_n}^{T'}e^{C-\frac{1}{2}|t|}dt\\
&\geq||p-\xi||-Ac^{-1}\int_{b'}^{\infty}e^{C-\frac{A}{2}|t|}dt-Ac^{-1}\int_{-\infty}^{T'}e^{C-\frac{A}{2}|t|}dt>0
\end{align*}
if $-T'$ and $b'$ are big enough, then $\hat{\sigma}$ is not constant.

Claim 1 and Claim 2 are in contradiction, then $p\in\Omega_{P}$. 
\endproof

Now we have several results about the behavior of $(A,0)$ quasi-geodesics, the proofs are very similar to the analogous results in Section 11 in \cite{Zim1}.

\begin{corollary}\label{limgeo}
Let $\Omega\subset\C^2$, $\{\psi_n\}_{n\in\N}$ and $P:\C\longrightarrow\R$ be as in Theorem \ref{scalingth} or \ref{scalingpol}. Let $\sigma_n:[a_n,b_n]\longrightarrow\Omega$ be a sequence of $(A,0)$ quasi-geodesics with respect to the Catlin metric, and define $\tilde{\sigma}_n:=\psi_n\circ\sigma_n$. Suppose that $\sigma_n$ converges locally uniformly to a $(A,0)$ quasi-geodesic line $\tilde{\sigma}:\R\longrightarrow\Omega_P$. If $\lim_{n\rightarrow\infty}\tilde{\sigma}_n(b_n)=:x_\infty$ then
$$\lim_{t\rightarrow+\infty}\tilde{\sigma}(t)=x_\infty$$
\end{corollary}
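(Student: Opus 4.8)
The plan is to show that $\tilde\sigma(t) \to x_\infty$ as $t \to +\infty$ by combining two facts: on the one hand $\tilde\sigma_n(b_n) \to x_\infty$ by hypothesis, and on the other hand $\tilde\sigma_n$ converges locally uniformly to $\tilde\sigma$, so for any fixed large $t$ the point $\tilde\sigma_n(t)$ is close to $\tilde\sigma(t)$. The subtlety is that $b_n \to +\infty$ (since $\tilde\sigma$ is a quasi-geodesic \emph{line}, its domain exhausts $\R$), so I cannot directly compare $\tilde\sigma_n(t)$ with $\tilde\sigma_n(b_n)$ via local uniform convergence: these are evaluated at points that are far apart in the parameter. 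Instead I would control the tail of $\tilde\sigma_n$ near its endpoint $b_n$ using the decay estimate already established inside the proof of Proposition \ref{visibscal}.

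First I would fix $\varepsilon > 0$ and choose $t_0 > 0$ large so that the tail estimate gives $\|\tilde\sigma(t) - x_\infty\|$ small for $t \ge t_0$; more precisely, I would argue directly on $\tilde\sigma$. The key is that $\tilde\sigma$ is an $(A,B)$ quasi-geodesic ray into the model domain $\Omega_P$ emanating (after reparametrization) near $o = (0,-1)$, and by the second estimate of Lemma \ref{unifgold} applied to the limiting metric $d^C_{\Omega_P}$ (or by passing to the limit in the corresponding estimate for $d^C_{r_n}$), one gets
$$
A^{-1}|t| - B \le d^C_{\Omega_P}(\tilde\sigma(0), \tilde\sigma(t)) \le 2C + \ln\!\left(\frac{1}{|r_P(\tilde\sigma(0))\, r_P(\tilde\sigma(t))|}\right),
$$
hence $|r_P(\tilde\sigma(t))| \le e^{C + B/2 - (A/2)|t|} \to 0$ as $t \to +\infty$. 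So $\tilde\sigma(t) \to \partial\Omega_P$. It remains to identify the limit point, and this is where the hypothesis $\tilde\sigma_n(b_n) \to x_\infty$ enters.

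The main obstacle is precisely this identification: to show the boundary point reached by $\tilde\sigma$ is $x_\infty$ and not some other boundary point, since $\tilde\sigma_n$ is defined on $[a_n,b_n]$ with $b_n \to +\infty$ and the convergence to $\tilde\sigma$ is only local. I would handle it as follows. Fix $t_1$ large; by local uniform convergence, $\tilde\sigma_n(t_1) \to \tilde\sigma(t_1)$, which by the decay estimate is within $O(e^{-(A/2)t_1})$ of $\partial\Omega_P$, and in fact close to $\lim_{t\to\infty}\tilde\sigma(t)$ up to an error going to $0$ with $t_1$ (using that $\|\tilde\sigma(t)-\tilde\sigma(t_1)\| \le \int_{t_1}^\infty \|\tilde\sigma'\| \le Ac^{-1}\int_{t_1}^\infty e^{C+B/2-(A/2)|t|}\,dt$, by the Lipschitz/length bound from Corollary \ref{uniflip} together with the $M_{r_n}$ lower bound in Lemma \ref{unifgold}). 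On the other hand, for $n$ large the same length estimate applied to $\tilde\sigma_n$ on $[t_1, b_n]$ gives $\|\tilde\sigma_n(t_1) - \tilde\sigma_n(b_n)\| \le Ac^{-1}\int_{t_1}^{b_n}|r_n(\tilde\sigma_n(t))|\,dt \le Ac^{-1}\int_{t_1}^{\infty} e^{C+B/2-(A/2)|t|}\,dt$, which is small uniformly in $n$ once $t_1$ is large. Letting $n \to \infty$ gives $\|\tilde\sigma(t_1) - x_\infty\|$ small, and then letting $t_1 \to \infty$ shows $\lim_{t\to\infty}\tilde\sigma(t) = x_\infty$. The only care needed is that the decay estimate for $\tilde\sigma_n$ near $b_n$ was derived in Proposition \ref{visibscal} under the reparametrization placing the maximum of $|r_n \circ \tilde\sigma_n|$ at $0$; here I would instead invoke directly the second estimate of Lemma \ref{unifgold} applied at the endpoint, noting $\tilde\sigma_n(b_n)$ stays in a fixed ball, so that $|r_n(\tilde\sigma_n(t))| \le e^{C'+B/2-(A/2)\mathrm{dist}(t,\{a_n,b_n\})}$ and the relevant tail integral is finite and small.
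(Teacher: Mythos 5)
Your plan takes a genuinely different route from the paper, and it has concrete gaps. The paper argues by contradiction: assuming $y_\infty:=\lim_{t\rightarrow+\infty}\tilde{\sigma}(t)\neq x_\infty$, it extracts $t_n\rightarrow\infty$ with $\tilde{\sigma}_n(t_n)\rightarrow y_\infty$, observes that at least one of $x_\infty,y_\infty$ is finite so that a sub-segment $[t'_n,b'_n]\subset[t_n,b_n]$ satisfies the hypotheses of Proposition \ref{visibscal} (containment in some $B_R(0)$ and separated endpoints), and then the re-centering time $T_n\geq t_n\rightarrow\infty$ produced by that proposition forces $d^C_{\Omega_P}(\tilde{\sigma}(0),\hat{\sigma}(0))=\lim_n d^C_{r_n}(\tilde{\sigma}_n(0),\tilde{\sigma}_n(T_n))\geq \lim_n A^{-1}t_n-B=\infty$, a contradiction. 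No pointwise decay of $r_n$ along the tail is ever needed.

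Your direct estimate fails at two points. First, the corollary is invoked (e.g.\ in Case 1 of the proof of Theorem \ref{mainthmod}) in situations where $x_\infty$ may be $\infty$, and nothing in the hypotheses confines $\tilde{\sigma}$ or the tails $\tilde{\sigma}_n|_{[t_1,b_n]}$ to a fixed ball: every estimate you quote from Lemma \ref{unifgold} and Corollary \ref{uniflip} holds only on $\psi_n(\Omega)\cap B_R(0)$, so already your opening claim $|r_P(\tilde{\sigma}(t))|\rightarrow 0$ is unjustified, and the whole scheme is silent on the case $\lim_{t\rightarrow+\infty}\tilde{\sigma}(t)=\infty$. Second, the endpoint decay bound cannot be obtained as you describe: applying the second estimate of Lemma \ref{unifgold} at $b_n$ only gives $|r_n(\tilde{\sigma}_n(t))|\,|r_n(\tilde{\sigma}_n(b_n))|\leq e^{2C+B-|t-b_n|/A}$, which yields no control on $|r_n(\tilde{\sigma}_n(t))|$ precisely when $|r_n(\tilde{\sigma}_n(b_n))|\rightarrow 0$, i.e.\ in the case of interest $x_\infty\in\partial\Omega_P$; the square-root trick in Claim 2 of Proposition \ref{visibscal} works only because the curve is reparametrized at the maximum of $|r_n\circ\tilde{\sigma}_n|$ and that maximum is separately shown to tend to $0$, facts extracted from the hypotheses of that proposition. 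Moreover, even granting your bound $e^{C'+B/2-(A/2)\,\mathrm{dist}(t,\{a_n,b_n\})}$, the part of $\int_{t_1}^{b_n}$ with $t$ near $b_n$ contributes a quantity of order $\int_0^{\infty}e^{-cs}\,ds$, a constant independent of $t_1$, so the tail integral is not small and the final step does not close. A contradiction argument in the paper's style (reducing to Proposition \ref{visibscal} on a well-chosen bounded sub-segment) appears unavoidable here.
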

\proof
By contradiction, suppose there exists $t_n\nearrow+\infty$ such that $$y_\infty:=\lim_{n\rightarrow\infty}\tilde{\sigma}(t_n)\neq x_\infty.$$
Since $\tilde{\sigma}_n$ converges locally uniformly to $\hat{\sigma}$, there exists a subsequence $\{t_{n_k}\}_{k\in\N}$ with $t_{n_k}\in[a_{n_k},b_{n_k}]$ such that 
$$\lim_{k\rightarrow\infty}\tilde{\sigma}_{n_k}(t_{n_k})=y_\infty.$$
Since $x_\infty$ and $y_\infty$ are different, at least one is finite. Hence there exist $t'_{n_k}, b'_{n_k}\in\R$ with $t_{n_k}\leq t'_{n_k}\leq b'_{n_k}\leq b_{n_k}$, $R>0$ and $\epsilon>0$ such that
\begin{enumerate}
	\item $\tilde{\sigma}_{n_k}([t'_{n_k},b'_{n_k}])\subset B_R(0)$,
	\item $\lim_{k\rightarrow\infty}||\tilde{\sigma}_{n_k}(t'_{n_k})-\tilde{\sigma}_{n_k}(b'_{n_k})||>\epsilon.$
\end{enumerate}
We can use Proposition \ref{visibscal}, then there exists $T_k\in[t'_{n_k},b'_{n_k}]$ such that $t\mapsto\tilde{\sigma}_{n_k}(t+T_k)$ converges to a $(A,0)$ quasi-geodesic line $\hat{\sigma}$. Finally
$$d^C_{r_P}(\tilde{\sigma}(0),\hat{\sigma}(0))=\lim_{k\rightarrow\infty}d^C_{r_{n_k}}(\tilde{\sigma}_{n_k}(0),\tilde{\sigma}_{n_k}(T_k))\geq\lim_{k\rightarrow\infty} A^{-1}T_k\geq\lim_{k\rightarrow\infty}A^{-1}t_{n_k}=\infty $$
but this is a contradiction. 
\endproof

\begin{corollary}\label{gbmodel}
Let $\Omega_P$ be a model domain, let $p_n, q_n\in\Omega_P$ be two sequences with $p_n\rightarrow\xi^+\in\partial\Omega\cup\{\infty\}$ and $q_n\rightarrow\xi^-\in\partial\Omega\cup\{\infty\}$, and 
$$\lim_{n,m\rightarrow\infty}(p_n|q_m)_o=\infty,$$
then $\xi^+=\xi^-$.
\end{corollary}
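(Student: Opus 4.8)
The plan is to argue by contrapositive: assuming $\xi^+ \neq \xi^-$, I would show that the Gromov product $(p_n | q_m)_o$ stays bounded along suitable subsequences, contradicting the hypothesis that $\lim_{n,m\to\infty}(p_n|q_m)_o = \infty$. First I would pass to subsequences so that the quantities $d^C_{r_P}(o,p_n)$, $d^C_{r_P}(o,q_m)$, and $d^C_{r_P}(p_n,q_m)$ behave controllably, and connect $o$ to $p_n$ by a geodesic $\gamma_n$ and $o$ to $q_m$ by a geodesic $\beta_m$ (these exist since $(\Omega_P, d^C_{r_P})$ is a complete geodesic space by the Proposition preceding Lemma \ref{geomodel}). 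The idea is that these geodesics, being genuine geodesics emanating from a fixed basepoint, must ``split apart'' at a bounded parameter because their endpoints are separated in the Euclidean sense; the Gromov product measures essentially the time for which two geodesics from $o$ stay close, so I need a visibility-type statement.

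The key step is a visibility argument: I would show that for sequences $p_n \to \xi^+$ and $q_n \to \xi^-$ with $\xi^+ \neq \xi^-$, there is $R > 0$ such that every geodesic segment from $p_n$ to $q_n$ meets a fixed compact set of $\Omega_P$. This is exactly the content that Proposition \ref{visibscal} and Corollary \ref{limgeo} are designed to deliver in the scaling setting, but here $\Omega_P$ itself plays the role of the limit domain, so I would instead apply them with the trivial scaling sequence $\psi_n = \id$ (the hypotheses of Theorem \ref{scalingth}/\ref{scalingpol} are met by $\Omega_P$ with $\psi_n=\id$, $r_n = r_P$). Concretely: suppose for contradiction that geodesics $\sigma_n$ from $p_n$ to $q_n$ escape every compact set, i.e. $\max_t |r_P(\sigma_n(t))| \to 0$ together with the base points going to infinity in an uncontrolled way; then restricting to an interval where the geodesic exits a large ball $B_R(0)$ with endpoints at Euclidean distance bounded below, Proposition \ref{visibscal} produces a limiting quasi-geodesic \emph{line} inside $\Omega_P$ through a fixed point, and Corollary \ref{limgeo} forces both its ends to converge — which, combined with $\xi^+ \neq \xi^-$, is contradictory. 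Hence the geodesics $\sigma_n$ stay in a fixed compact $K \subset \Omega_P$.

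Once the geodesics from $p_n$ to $q_m$ pass through a fixed compact set $K$, I would finish with the standard computation: pick $z_{n,m} \in \sigma_{n,m}$ with $d^C_{r_P}(o, z_{n,m})$ bounded by some $D = \mathrm{diam}(K \cup \{o\})$, and then
\[
2(p_n|q_m)_o = d^C_{r_P}(o,p_n) + d^C_{r_P}(o,q_m) - d^C_{r_P}(p_n,q_m)
\le 2 d^C_{r_P}(o, z_{n,m}) \le 2D,
\]
using $d^C_{r_P}(p_n,q_m) = d^C_{r_P}(p_n, z_{n,m}) + d^C_{r_P}(z_{n,m}, q_m) \ge d^C_{r_P}(o,p_n) - D + d^C_{r_P}(o,q_m) - D$ along the geodesic and the triangle inequality. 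Thus $(p_n|q_m)_o \le D$ for all large $n,m$, contradicting $\lim_{n,m\to\infty}(p_n|q_m)_o = \infty$. The main obstacle is the visibility step: one must carefully handle the case $\xi^+ = \infty$ or $\xi^- = \infty$ (which the scaling propositions already allow, since their boundary point may be $\infty$) and verify that applying Proposition \ref{visibscal} with $\psi_n = \id$ is legitimate — in particular that the hypothesis $\liminf ||\tilde\sigma_n(a_n) - \tilde\sigma_n(b_n)|| > 0$ can be arranged precisely because the two endpoints limit to distinct boundary points, after possibly truncating the geodesics to the portion lying in a fixed large ball.
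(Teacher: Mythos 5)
Your proposal is correct and follows essentially the same route as the paper: assume $\xi^+\neq\xi^-$, apply Proposition \ref{visibscal} with the trivial scaling $\psi_n=\id$ to the geodesics joining $p_n$ to $q_m$ to obtain a point on each geodesic at uniformly bounded distance from $o$, and then bound the Gromov product by that distance (the paper phrases this last step via $(\gamma(s)|\gamma(t))_{\gamma(0)}=0$ and the basepoint-change inequality, which is the same computation as yours). Your remark that the hypotheses of Proposition \ref{visibscal} must be arranged by truncating to a large ball when an endpoint tends to $\infty$ is a point the paper itself glosses over.
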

\proof
Suppose by contradiction that $\xi^+\neq\xi^-$ and consider $\sigma_n:[a_n,b_n]\longrightarrow\Omega_P$ be a sequence of geodesics such that $0\in[a_n,b_n]$ and $\sigma_n(a_n)=p_n$ and $\sigma_n(b_n)=q_n$. Now we can use Proposition \ref{visibscal}, then $\sigma_n$ converges local uniformly to a geodesic $\sigma:\R\longrightarrow\Omega_P$. Finally, recalling 
$$|(x|y)_o-(x|y)_{o'}|\leq d(o,o') $$
and, given a geodesic $\gamma$, we have for each $s\leq0\leq t$
$$(\gamma(s)|\gamma(t))_{\gamma(0)}=0.$$
Finally
\begin{align*}\lim_{n\rightarrow\infty}(p_n|q_n)_o&=\lim_{n\rightarrow\infty}
(\sigma_n(a_n)|\sigma_n(b_n))_o\\&\leq\lim_{n\rightarrow\infty}[
(\sigma_n(a_n)|\sigma_n(b_n))_{\sigma_n(0)}+d(\sigma_n(0),o)]=d(\sigma(0),o)<\infty.
\end{align*} 
\endproof

\begin{lemma}\label{geoinf}
Let $\Omega_Q$ be a model domain. Then there exists no geodesic line $\sigma:\R\longrightarrow\Omega_Q$ such that 
$$\lim_{t\rightarrow\infty}\sigma(t)=\lim_{t\rightarrow-\infty}\sigma(t)=\infty $$
\end{lemma}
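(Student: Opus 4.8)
The plan is to argue by contradiction, reducing to the "visibility'' statement already established in Proposition \ref{prop1model} by means of the scaling at infinity of Proposition \ref{scalingpol}. Suppose $\sigma:\R\longrightarrow\Omega_P$ is a geodesic with $\lim_{t\to+\infty}\sigma(t)=\lim_{t\to-\infty}\sigma(t)=\infty$. First I record two easy facts. Applying the estimate \eqref{estimcat} to the unit–speed curve $\sigma$, the function $t\mapsto\log|r_P(\sigma(t))|$ is $1$-Lipschitz, so $|r_P(\sigma(t))|$ is finite and nonzero for every $t$ and decays at most exponentially; and by the proposition establishing that $(\Omega_P,d^C_{r_P})$ is complete, $d^C_{r_P}(o,\sigma(t))\to+\infty$ as $t\to\pm\infty$, so $\sigma$ is a proper line. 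After reparametrising we may assume $t\mapsto d^C_{r_P}(o,\sigma(t))$ attains its minimum at $t=0$; put $D:=d^C_{r_P}(o,\sigma(0))$. Since $\sigma$ is a geodesic, $(\sigma(t)\mid\sigma(-t))_{\sigma(0)}=0$ for all $t>0$, hence by $|(x\mid y)_{o}-(x\mid y)_{o'}|\le d(o,o')$ we get $(\sigma(t)\mid\sigma(-t))_{o}\le D$ for all $t>0$. Thus it suffices to derive a contradiction by showing that $(\sigma(t_n)\mid\sigma(-t_n))_{o}\to+\infty$ along a suitable sequence $t_n\to+\infty$.

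To this end fix $t_n\to+\infty$, chosen (see below) so that $|r_P(\sigma(t_n))|\ge|r_P(\sigma(t))|$ for all $t\le t_n$, and apply Proposition \ref{scalingpol} to $u_n:=\sigma(t_n)\to\infty$: one obtains automorphisms $\psi_n$ of $\C^2$, a model domain $\Omega_Q$ with $\Omega_{Q_n}:=\psi_n(\Omega_P)\to\Omega_Q$, $r_{Q_n}\to r_Q$ and $M_{r_{Q_n}}\to M_{r_Q}$ locally uniformly, and $\psi_n(u_n)=o$. The curves $\gamma_n:=\psi_n\circ\sigma(\,\cdot+t_n)$ are $d^C_{r_{Q_n}}$-geodesics through $o$; since $d^C_{r_{Q_n}}(o,\gamma_n(s))=|s|$ and the metrics converge to the complete metric $d^C_{r_Q}$, the $\gamma_n$ stay in a fixed compact subset of $\Omega_Q$ on each bounded parameter interval, so by the uniform Lipschitz bound of Corollary \ref{uniflip} and Ascoli–Arzelà a subsequence of $\gamma_n$ converges locally uniformly to a $d^C_{r_Q}$-geodesic line $\gamma_\infty:\R\to\Omega_Q$ with $\gamma_\infty(0)=o$. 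From $r_{Q_n}\circ\psi_n=\lambda_n^{-1}r_P$ with $\lambda_n=|r_P(u_n)|$ we get $|r_{Q_n}(\gamma_n(s))|=|r_P(\sigma(t_n+s))|/|r_P(\sigma(t_n))|$, so the choice of $t_n$ forces $|r_{Q_n}(\gamma_n(s))|\le1$ for $s\le0$, hence $|r_Q(\gamma_\infty(s))|\le1$ for $s\le0$. As $d^C_{r_Q}(o,\gamma_\infty(s))=|s|\to\infty$, the points $\gamma_\infty(s)$ leave every compact set while staying at bounded depth, and — arguing as in Proposition \ref{visibscal}/Corollary \ref{limgeo} with the estimates of Lemma \ref{unifgold} — this can only happen by $|r_Q(\gamma_\infty(s))|\to0$, i.e. $\gamma_\infty(-\infty)=:\xi^{-}\in\partial\Omega_Q$ is a \emph{finite} boundary point. (If instead $\sup_{\R}|r_P\circ\sigma|=+\infty$, so that no such $t_n$ exists, then along some sequence $|r_P(\sigma(t))|\to+\infty$; the equality case in \eqref{estimcat} then forces $\sigma$ to agree on a half–line with a vertical geodesic of Lemma \ref{geomodel}, whose opposite endpoint lies in $\partial\Omega_P$, contradicting $\sigma(\mp\infty)=\infty$.)

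It remains to identify $\gamma_\infty(+\infty)=:\xi^{+}$ and close the argument. Running the symmetric scaling at $v_n:=\sigma(-t_n)\to\infty$, with $-t_n$ chosen so that $|r_P(\sigma(-t_n))|$ dominates $|r_P(\sigma(t))|$ for $t\ge -t_n$, produces in the same way a model $\Omega_{Q'}$ and a limit geodesic whose \emph{forward} endpoint is finite; the two constructions must be compatible because the scalings $\psi_n$ are isometries of $(\Omega_P,d^C_{r_P})$ onto $(\Omega_{Q_n},d^C_{r_{Q_n}})$ and both record the single Euclidean end $\infty$ of $\sigma$, so one ends up with a geodesic line of a model domain whose two ends are the \emph{same} finite boundary point; since $(\gamma_\infty(s)\mid\gamma_\infty(-s))_o=0$, Proposition \ref{prop1model} then yields a contradiction, which transported back through the isometries $\psi_n$ gives $(\sigma(t_n)\mid\sigma(-t_n))_o\to+\infty$, against the bound $\le D$. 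The main obstacle is precisely this last identification: controlling the ends of the rescaled geodesic so that the two a priori distinct ``directions at infinity'' of $\sigma$ are seen, after blowing up the point $\infty$, as one and the same finite boundary point of the limit model, so that Proposition \ref{prop1model} applies; the $1$-Lipschitz behaviour of $\log|r_P\circ\sigma|$ together with the depth-monotone choice of the scaling centres $\sigma(\pm t_n)$ is what makes this work.
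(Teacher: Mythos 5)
There is a genuine gap, and it sits exactly where you locate "the main obstacle": your argument never actually produces the object to which Proposition \ref{prop1model} could be applied. Two steps fail. First, from $|r_{Q}(\gamma_\infty(s))|\leq 1$ for $s\leq 0$ you conclude that $\gamma_\infty(-\infty)$ is a \emph{finite} boundary point. But the boundary of a model domain is unbounded, so a curve can have depth bounded (or even tending to $0$) while escaping to $\infty$ in $\overline{\C^2}$; bounded depth simply does not localize the curve, and the cited arguments from Proposition \ref{visibscal} and Corollary \ref{limgeo} all require the curve to stay in a fixed ball $B_R(0)$, which you do not have. (You also cannot invoke Proposition \ref{behgeomode} to say the limit even exists, since that result is proved later and uses Lemma \ref{geoinf}.) Second, and more seriously, the two rescalings centered at $\sigma(t_n)$ and at $\sigma(-t_n)$ are unrelated affine maps with different centers and different dilation factors; they produce two a priori different limit domains $\Omega_Q$, $\Omega_{Q'}$ and two separate limit geodesics, and the assertion that "both record the single Euclidean end $\infty$" and therefore glue into one geodesic line of one model domain with both ends at the \emph{same} finite boundary point is exactly the statement that would need proof — no mechanism for it is given. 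The parenthetical treatment of the case $\sup_\R|r_P\circ\sigma|=+\infty$ is also unsound: \eqref{estimcat} is a one-sided inequality, there is no "equality case" to exploit, and a geodesic along which the depth is unbounded need not coincide with a vertical geodesic of Lemma \ref{geomodel}.

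For comparison, the paper avoids all boundary identifications. It applies the single global dilation $\Phi_n(z,w)=(n^{-1}z,n^{-m}w)$, which maps $\Omega_P$ isometrically onto $\Omega_{P_n}$ with $P_n\to H$ homogeneous; Proposition \ref{visibscal} applied to the two compressed halves $\Phi_n\circ\sigma|_{[0,\infty)}$ and $\Phi_n\circ\sigma|_{(-\infty,0]}$ yields parameter shifts $a_n,b_n\to\infty$ and limit geodesics $\sigma^{\pm}_\infty$ \emph{inside} $\Omega_H$. The contradiction is then purely metric: since $\Phi_n$ is an isometry, $d^C_{H}(\sigma^+_\infty(0),\sigma^-_\infty(0))=\lim_n d^C_{P}(\sigma(a_n),\sigma(-b_n))=\lim_n(a_n+b_n)=\infty$, while both points lie in $\Omega_H$. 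If you want to salvage your scheme, the missing ingredient is precisely a statement of this type; the cleanest repair is to replace the two point-centered scalings by the one dilation that is simultaneously adapted to both tails.
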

\proof
We consider $\{\psi_n\}_{n\in\N}$ and $P:\C\longrightarrow \R$ of Proposition \ref{scalingpol} with respect to the sequence $(0,-n)$. Define $\sigma_n=\Phi_n\circ\sigma$, $\sigma_n^+=\sigma_n|_{[0,\infty)}$ and $\sigma_n^-=\sigma_n|_{(-\infty,0]}$. 
By Proposition \ref{visibscal}, there exist two sequences $a_n,b_n\rightarrow\infty$ such that $\sigma^+_n(t-a_n)$ and $\sigma_n^-(t+b_n)$ converge locally uniformly respectively to the geodesics $\sigma^+_\infty:\R\longrightarrow\Omega_{P}$ and $\sigma^-_\infty:\R\longrightarrow\Omega_{P}$. Now 
$$d^C_{r_P}(\sigma^+_\infty(0),\sigma^-_\infty(0))=\lim_{n\rightarrow\infty}d^C_{r_{P_n}}(\sigma^+_n(-a_n),\sigma_n^-(b_n))=\lim_{n\rightarrow\infty}d^C_{r_{Q}}(\sigma(-a_n),\sigma(b_n))=\lim_{n\rightarrow\infty}b_n+a_n=\infty$$
which is a contradiction.
\endproof

Finally we can prove the good behavior of the geodesic lines in the model domains.

\begin{proposition}\label{behgeomode}
Let $\Omega_P$ be a model domain and $\sigma:\R\longrightarrow\Omega_P$ a geodesic line. Then both 
$$\lim_{t\rightarrow\infty}\sigma(t),\ \ \lim_{t\rightarrow-\infty}\sigma(t)$$
exist in $\overline{\C^2}$ and they are different.
\end{proposition}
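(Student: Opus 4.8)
The plan is to prove separately that the two limits exist in $\overline{\C^2}$ and that they are distinct; the first uses Corollary~\ref{gbmodel}, the second Proposition~\ref{prop1model} together with Lemma~\ref{geoinf}.

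\emph{Existence.} Fix $o=(0,-1)\in\Omega_P$. Since $\sigma$ is a geodesic, $d^C_{r_P}(o,\sigma(t))\geq|t|-d^C_{r_P}(o,\sigma(0))\to\infty$ as $|t|\to\infty$, and the proof that $(\Omega_P,d^C_{r_P})$ is complete in fact shows that $d^C_{r_P}$-bounded sets are relatively compact in $\Omega_P$; hence in the compact space $\overline{\C^2}$ every subsequential limit of $\sigma(t)$ as $t\to+\infty$ lies in $\partial\Omega_P\cup\{\infty\}$. Let $t_n\to+\infty$ and $s_n\to+\infty$ with $\sigma(t_n)\to\xi$ and $\sigma(s_n)\to\zeta$. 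Along a geodesic one has $(\sigma(t)|\sigma(s))_{\sigma(0)}=\min\{s,t\}$ for $s,t\geq0$, so by the inequality $|(x|y)_o-(x|y)_{o'}|\leq d(o,o')$ we get $(\sigma(t_n)|\sigma(s_m))_o\geq\min\{t_n,s_m\}-d^C_{r_P}(o,\sigma(0))\to\infty$ as $n,m\to\infty$, and Corollary~\ref{gbmodel} yields $\xi=\zeta$. Thus all subsequential limits agree, so $\lim_{t\to+\infty}\sigma(t)$ exists in $\overline{\C^2}$; the case $t\to-\infty$ is identical.

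\emph{Distinctness.} Put $\xi^{+}=\lim_{t\to+\infty}\sigma(t)$ and $\xi^{-}=\lim_{t\to-\infty}\sigma(t)$, and suppose, for contradiction, that $\xi^{+}=\xi^{-}$. If this common value is a finite boundary point, i.e. lies in $\partial\Omega_P$, set $p_n=\sigma(n)$ and $q_n=\sigma(-n)$; then $p_n\to\xi^{+}\in\partial\Omega_P$ and $q_n\to\xi^{-}\in\partial\Omega_P$, while, since $-m\leq0\leq n$ along $\sigma$, $(\sigma(n)|\sigma(-m))_{\sigma(0)}=0$, hence $(p_n|q_m)_o\leq d^C_{r_P}(o,\sigma(0))$ for all $n,m$ and in particular $\liminf_{n,m\to\infty}(p_n|q_m)_o<\infty$. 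By Proposition~\ref{prop1model} this forces $\xi^{+}\neq\xi^{-}$, a contradiction. The only remaining possibility compatible with $\xi^{+}=\xi^{-}$ is $\xi^{+}=\xi^{-}=\infty$, which is excluded by Lemma~\ref{geoinf}. Hence $\xi^{+}\neq\xi^{-}$.

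The delicate point is the non-oscillation of $\sigma$ at $\pm\infty$ --- a priori the geodesic could accumulate at several boundary points --- and this is exactly what Corollary~\ref{gbmodel} (and through it the visibility statement Proposition~\ref{visibscal}) rules out; once it and Lemma~\ref{geoinf} are granted, the remainder is a short manipulation of Gromov products along $\sigma$.
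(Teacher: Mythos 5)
Your proof is correct and follows essentially the same route as the paper's: existence via the computation $(\sigma(t)|\sigma(s))_{\sigma(0)}=\min\{s,t\}$ combined with Corollary~\ref{gbmodel}, and distinctness via $(\sigma(-t)|\sigma(t))_{\sigma(0)}\equiv 0$ together with Proposition~\ref{prop1model}, using Lemma~\ref{geoinf} to rule out the case where both limits are $\infty$. You are somewhat more explicit than the paper about why subsequential limits lie in $\partial\Omega_P\cup\{\infty\}$ and about the base-point change, but the argument is the same.
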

\proof
First of all we prove that $\lim_{t\rightarrow\infty}\sigma(t)$ exists (the limit to $-\infty$ is analogous). By contradiction, if there exist $t_k,s_k\rightarrow\infty$ such that 
$$\lim_{k\rightarrow\infty}\sigma(t_k)\neq\lim_{k\rightarrow\infty}\sigma(s_k)$$
then
$$\lim_{k\rightarrow\infty}(\sigma(t_k)|\sigma(s_k))_{\sigma(0)}=\lim_{k\rightarrow\infty}\frac{1}{2}[t_k+s_k-|t_k-s_k|]=\lim_{k\rightarrow\infty}\min\{t_k,s_k\}=\infty$$
that implies, using Corollary \ref{gbmodel}, that $\lim_{k\rightarrow\infty}\sigma(t_k)=\lim_{k\rightarrow\infty}\sigma(s_k)$, which is a contradiction. 

Now we have to prove that $\lim_{t\rightarrow\infty}\sigma(t)$ and $\lim_{t\rightarrow-\infty}\sigma(t)$ are different. By Lemma \ref{geoinf} they cannot be both $\infty$, then we can use Proposition \ref{prop1model} after noticing that
$$(\sigma(-t)|\sigma(t))_{\sigma(0)}\equiv0.$$
Then $\lim_{t\rightarrow\infty}\sigma(t)\neq\lim_{t\rightarrow-\infty}\sigma(t)$.
\endproof

Finally we have all the ingredients to prove the main result of this section.

\begin{theorem}\label{mainthmod}
Let $\Omega_Q$ be a model domain, then $(\Omega_Q,d^C_{r_Q})$ is Gromov hyperbolic. Moreover, the identity map $\Omega_Q\longrightarrow\Omega_Q$
extends to a homeomorphism $\overline{\Omega}^G_Q\longrightarrow\overline{\Omega}^*_Q$, where $\overline{\Omega}^*_Q:=\overline{\Omega}_Q\cup\{\infty\}$ is the one point compactification of the Euclidean closure of $\Omega_Q$.
\end{theorem}
\proof
\textbf{Part 1: $(\Omega_Q,d^C_{r_Q})$ is Gromov hyperbolic}

By contradiction, suppose that $(\Omega_Q,d^C_{r_Q})$ is not Gromov hyperbolic, then there exist three sequences of points $\{x_n\}_{n\in\N},\{y_n\}_{n\in\N},\{z_n\}_{n\in\N}$, correspondent geodesic segments $\{\sigma_{x_n,y_n}\}_{n\in\N}$, $\{\sigma_{y_n,z_n}\}_{n\in\N}$, $\{\sigma_{x_n,z_n}\}_{n\in\N}$ and a point $u_n\in \sigma_{x_n,y_n}$ such that
$$\lim_{n\rightarrow\infty}d^C_{\Omega_Q}(u_n,\sigma_{x_n,z_n}\cup\sigma_{y_n,z_n})=\infty.$$
Up to passing to a subsequences, we can suppose that  $u_n\rightarrow\xi\in\overline{\Omega}_Q$. We have two cases

\textbf{Case 1:} $\xi\in\Omega_Q$.

First of all we can suppose that $x_n, y_n,z_n$ converge respectively to $x_\infty,y_\infty,z_\infty\in\overline{\Omega}_Q\cup\{\infty\}$, and since
$$\lim_{n\rightarrow\infty} d^C_{r_Q}(u_n,\{x_n,y_n,z_n\})\geq\lim_{n\rightarrow\infty} d^C_{r_Q}(u_n,\sigma_{x_n,z_n}\cup\sigma_{y_n,z_n})=\infty$$
we have $x_\infty,y_\infty,z_\infty\in\partial\Omega_Q\cup\{\infty\}$.
After a reparametrisation, we can suppose that $\sigma_{x_n,y_n}(0)=u_n$, and using Ascoli-Arzelà Theorem $\sigma_{x_n,y_n}$ converges uniformly on compact sets to a geodesic line $\sigma:\R\longrightarrow\Omega_Q$. Now by Corollary \ref{limgeo} and Proposition \ref{behgeomode} we have $x_\infty\neq y_\infty$, then one of the two is different from $z_\infty$, for example $x_\infty\neq z_\infty$. Using Proposition \ref{visibscal}, there exists $T_n\in\R$ such that  $t\mapsto\sigma_{x_n,z_n}(t+T_n)$ converges locally uniformly to a geodesic line $\hat{\sigma}$. Finally
$$\lim_{n\rightarrow\infty}d^C_{r_Q}(\sigma(0),\hat{\sigma}(0))=\lim_{n\rightarrow\infty}d^C_{r_Q}(\sigma_{x_n,y_n}(0),\sigma_{x_n,z_n}(T_n))\geq \lim_{n\rightarrow\infty}d^C_{r_Q}(u_n,\sigma_{x_n,z_n}\cup\sigma_{y_n,z_n})=\infty$$
and this is a contradiction.

\textbf{Case 2:} $\xi\in\partial\Omega$.

Let $\{\psi_n\}_{n\in\N}$ and $P$ be as in Theorem \ref{scalingth} or \ref{scalingpol} with respect to the sequence $u_n$, then
up to passing to subsequences $\psi_n(x_n), \psi_n(y_n)$ and $\psi_n(z_n)$ converge respectively to $x_\infty, y_\infty$ and $z_\infty$ in $\overline{\Omega_P}\cup\{\infty\}$. Now after a parametrization we can suppose that $\sigma_{x_n,y_n}(0)=u_n$, so using Ascoli-Arzelà $\psi_n\circ\sigma_{x_n,y_n}$ converges to a geodedic $\sigma$ of $\Omega_P$ such that 
$$\lim_{t\rightarrow-\infty}\sigma(t)=x_\infty,\ \mbox{and}\, \lim_{t\rightarrow\infty}\sigma(t)=y_\infty.$$ By Lemma \ref{behgeomode}, $x_\infty\neq y_\infty$ so $z_\infty$ does not equal at least one of the two, for example $x_\infty\neq z_\infty$. Now by Proposition \ref{visibscal}, $\psi_n\circ\sigma_{x_n,z_n}$ converges locally uniformly to the geodesic $\hat{\sigma}:\R\longrightarrow\Omega_Q$. Finally
\begin{align*}
d^C_{r_P}((0,-1),\hat{\sigma}(0))&=\lim_{n\rightarrow\infty}d^C_{r_{P_n}}(\psi_n(u_n),\psi_n\circ\sigma_{x_n,z_n}(0))\\&=\lim_{n\rightarrow\infty}d^C_{r_Q}(u_n,\sigma_{x_n,z_n}(0))\\&\geq\lim_{n\rightarrow\infty}d^C_{r_Q}(u_n,\sigma_{x_n,z_n}\cup\sigma_{y_n,z_n})=\infty
\end{align*}
which is a contradiction. 

Then $(\Omega_Q, d^C_{r_Q})$ is Gromov hyperbolic.

\textbf{Part 2: The identity map $\Omega_Q\longrightarrow\Omega_Q$
	extends to a homeomorphism $\overline{\Omega}^G_Q\longrightarrow\overline{\Omega}^*_Q$.}
Since the proof is essentially identical to the one of Proposition 1.2 in \cite{Zim1} and Part 2 of Theorem \ref{mainth} (see Section 6), we omit it.
\endproof

From the general theory on Gromov hyperbolic spaces \cite[Chapter 5]{GdlH}, also an $(A,B)$ quasi-geodesic ray $\sigma$, too, has a "good" behavior in the Gromov boundary, then we have directly the following

\begin{corollary}\label{behqgeomode}
Let $\Omega_P$ be a model domain and $\sigma:\R\longrightarrow\Omega_P$ be a $(A,B)$ quasi-geodesic line with respect to the Catlin metric. Then both 
$$\lim_{t\rightarrow\infty}\sigma(t),\ \ \lim_{t\rightarrow-\infty}\sigma(t)$$
exist in $\overline{\C^2}$ and they are different.
\end{corollary}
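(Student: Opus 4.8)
The statement to prove is Corollary \ref{behqgeomode}: in a model domain $\Omega_P$, every $(A,B)$ quasi-geodesic line $\sigma:\R\longrightarrow\Omega_P$ has limits at $\pm\infty$ in $\overline{\C^2}$, and these limits are distinct.

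My plan is to reduce everything to the already-established facts about genuine geodesics via the standard Morse-type stability of quasi-geodesics in Gromov hyperbolic spaces. By Theorem \ref{mainthmod}, $(\Omega_P, d^C_{r_P})$ is Gromov hyperbolic, and it is a proper geodesic space (by the completeness proposition together with Hopf-Rinow). Hence the Morse lemma applies: there is a constant $\kappa = \kappa(\delta, A, B)$ so that the image of any $(A,B)$ quasi-geodesic line lies within Hausdorff distance $\kappa$ of the image of a genuine geodesic line, and conversely. Concretely, for the given $\sigma$ one picks, for each pair of parameters $s<t$, a geodesic segment $[\sigma(s),\sigma(t)]$; these stay within $\kappa$ of $\sigma([s,t])$, and a diagonal/Ascoli-Arzelà argument (using properness) produces a geodesic line $\gamma:\R\longrightarrow\Omega_P$ with $\sup_{u\in\R} d^C_{r_P}(\gamma(u), \sigma(\R))<\infty$ and $\sup$ in the other direction finite as well, so that $\sigma$ and $\gamma$ are at bounded Hausdorff distance.

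Next I would transfer the conclusions of Proposition \ref{behgeomode} from $\gamma$ to $\sigma$. Proposition \ref{behgeomode} gives that $\lim_{u\to\pm\infty}\gamma(u)$ exist in $\overline{\C^2}$ and are distinct; call them $\xi^\pm$. To see that $\sigma$ has the same limits: since $\sigma$ stays within bounded $d^C_{r_P}$-distance of $\gamma$, and since the first estimate of Lemma \ref{unifgold} (applied with $n$ fixed, i.e. directly in $\Omega_P$, or rather the analogous statement obtained from Catlin's metric formula for $\Omega_P$) controls the Euclidean size of $r_P$ along bounded-distance neighborhoods, a point at bounded Kobayashi/Catlin distance from $\gamma(u)$ must Euclidean-converge to the same boundary point (or to $\infty$) as $\gamma(u)\to\xi^\pm$. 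More carefully: if $\xi^+\in\partial\Omega_P$ is finite, then $|r_P(\gamma(u))|\to 0$, and points within bounded $d^C_{r_P}$-distance have $|r_P|$ of comparable order (bounded ratio), hence also tend to $\partial\Omega_P$; combined with the fact that $d^C_{r_P}$ dominates the "horizontal'' displacement (the $\log$-type estimate \eqref{estimcat} and the tangential part of $M_{r_P}$), one gets $\sigma(t)\to\xi^+$ as $t\to+\infty$. If $\xi^+=\infty$, one argues instead that $\|\sigma(t)\|\to\infty$ because it is within bounded distance of $\gamma(u)$ with $\|\gamma(u)\|\to\infty$ and $d^C_{r_P}$-balls have bounded Euclidean diameter once $|r_P|$ is bounded below, while if $|r_P|$ is not bounded below along $\sigma$ one still concludes escape using \eqref{estimcat}. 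Either way $\lim_{t\to+\infty}\sigma(t)=\xi^+$ and symmetrically $\lim_{t\to-\infty}\sigma(t)=\xi^-$; since $\xi^+=\xi^-$ would force the geodesic $\gamma$ to have equal endpoint limits, contradicting Proposition \ref{behgeomode}, the two limits of $\sigma$ are distinct.

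The step I expect to be the real obstacle is the last one: converting "bounded $d^C_{r_P}$-distance'' into "same Euclidean limit.'' The subtlety is that $\Omega_P$ is unbounded, so boundedness of the Catlin distance does not immediately bound Euclidean distance uniformly — one genuinely needs the quantitative estimates of Lemma \ref{unifgold} (the lower bound $M_{r_P}(x,v)\geq c\|v\|/|r_P(x)|^{1/m}$ and the distance estimate $d^C_{r_P}(x,o)\leq C+\log(1/|r_P(x)|)$, now with the scaling maps taken to be the identity) and to handle separately the cases $\xi^\pm\in\partial\Omega_P$ and $\xi^\pm=\infty$. An alternative, cleaner route that avoids re-proving a Euclidean-limit statement is to mimic the proof of Proposition \ref{behgeomode} directly for quasi-geodesics: first show $\lim_{t\to\pm\infty}\sigma(t)$ exist by the Gromov-product computation $(\sigma(t_k)|\sigma(s_k))_{\sigma(0)}\to\infty$ when $t_k,s_k\to\infty$ (which for an $(A,B)$ quasi-geodesic still holds, with an additive error, since $d(\sigma(s),\sigma(t))\geq A^{-1}|t-s|-B$), then invoke Corollary \ref{gbmodel}; and to show the two limits differ, use that they cannot both be $\infty$ — here Lemma \ref{geoinf} extends to quasi-geodesics by the same scaling argument via Proposition \ref{visibscal}, which already produces $(A,B)$ quasi-geodesic limits — together with Proposition \ref{prop1model}, after checking $\liminf_{k,j}(\sigma(-t_k)|\sigma(s_j))_{\sigma(0)}<\infty$, which follows from $(\sigma(-t)|\sigma(t))_{\sigma(0)}\leq$ (const depending on $A,B$) for a quasi-geodesic. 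I would present this second route, since every ingredient it needs is already in the excerpt.
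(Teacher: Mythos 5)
Your first route is precisely the paper's proof: the author disposes of this corollary in a single sentence by appealing to the stability of quasi-geodesics (the Morse lemma) in the proper geodesic Gromov hyperbolic space $(\Omega_P,d^C_{r_P})$ furnished by Theorem \ref{mainthmod}, and then transferring the conclusion of Proposition \ref{behgeomode} from a shadowing geodesic line to $\sigma$. The transfer step you flag --- converting bounded Catlin distance into the same limit in $\overline{\C^2}$ --- is indeed the only point requiring care in the unbounded domain $\Omega_P$, and your sketch of it via (\ref{estimcat}) and the estimates of Lemma \ref{unifgold} is sound. Had you stopped there, your proposal would essentially coincide with (and usefully expand on) the paper's argument.

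However, the route you elect to present, the second one, has a genuine gap. The two Gromov-product estimates you claim for an $(A,B)$ quasi-geodesic do not follow from the defining inequalities ``with an additive error''. For $s_k\le t_k$ the best lower bound the inequalities give is
$$(\sigma(t_k)|\sigma(s_k))_{\sigma(0)}\geq\frac{1}{2}\left[(A^{-1}-A)\,t_k+(A^{-1}+A)\,s_k-3B\right],$$
whose leading coefficient $A^{-1}-A$ is negative as soon as $A>1$; if $t_k$ grows much faster than $s_k$ this bound is vacuous, so it does not force the Gromov product to infinity and Corollary \ref{gbmodel} cannot be invoked. Likewise, the upper bound one extracts for $(\sigma(-t)|\sigma(t))_{\sigma(0)}$ is $(A-A^{-1})t+\tfrac{3}{2}B$, which is unbounded for $A>1$, so the hypothesis of Proposition \ref{prop1model} is not verified either. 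Both estimates are true in a Gromov hyperbolic space, but only via the inequality $(\sigma(-t)|\sigma(t))_{\sigma(0)}\leq d^C_{r_P}(\sigma(0),[\sigma(-t),\sigma(t)])\leq\kappa$, where $\kappa$ comes from the Morse lemma placing $\sigma$ within bounded Hausdorff distance of a geodesic --- that is, both missing steps are exactly the content of quasi-geodesic stability, so your second route silently presupposes your first and, as written, does not close. The fix is simply to present the first route.
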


Finally, using the fact that the Catlin and Kobayashi metric are bi-Lipschitz in homogeneous model domains (Proposition \ref{cathom}) we obtain the following

\begin{corollary}\label{Gromohom}
Let $\Omega_H$ be a model domain where $H:\C\longrightarrow \R$ is a subharmonic homogeneous polynomial without harmonic terms, then $(\Omega_H,d^K_{\Omega_H})$ is Gromov hyperbolic. Moreover, the identity map $\Omega_H\longrightarrow\Omega_H$
extends to a homeomorphism $\overline{\Omega}^G_H\longrightarrow\overline{\Omega}^*_H$, where $\overline{\Omega}^*_H:=\overline{\Omega}_H\cup\{\infty\}$ is the one point compactification of the Euclidean closure of $\Omega_H$.
\end{corollary}

\subsection{Gromov hypebolicity of domains with non-compact automorphism group}

At this point Theorem \ref{ncautoth} descends easily from the results of this section: the proof is direct consequence of  Corollary \ref{Gromohom} and the following result by Berteloot.

\begin{theorem}\cite{Bert3}
Let $\Omega\subset\C^2$ be a pseudoconvex domain. Suppose that there exists $\xi\in\partial\Omega$ such that $\partial\Omega$ is smooth and of finite type in a neighborhood of $\xi$ and there exists a sequence of automorphisms $\{\phi_k\}_{k\in\N}$ and a point $p\in\Omega$ such that $\phi_k(p)\rightarrow\xi$. Then there exists a subharmonic homogeneous polynomial $H:\C\longrightarrow\R$ without harmonic terms such that $\Omega$ is biholomorphic to $\Omega_H$.
\end{theorem}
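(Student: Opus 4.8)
The plan is to prove that, under the hypotheses, $\Omega$ is biholomorphic to a model domain $\Omega_H$ with $H$ a subharmonic homogeneous polynomial without harmonic terms, following Berteloot's normal-families approach to the scaling method. The strategy is to apply the scaling construction of Theorem \ref{scalingth} to the sequence $\eta_k:=\phi_k(p)$, which converges to the finite type boundary point $\xi$, and then to extract a limiting biholomorphism from the rescaled automorphisms by a normal-families argument. The crucial point is that the limit domain produced by scaling a sequence coming from \emph{automorphisms} (rather than an arbitrary sequence) turns out to be homogeneous, which forces the defining polynomial to be homogeneous.

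First I would set up the scaling maps $\{\psi_k\}_{k\in\N}$ associated to $\{\eta_k\}$ as in Theorem \ref{scalingth}, so that $r_k=\frac{1}{\epsilon_k}r\circ\psi_k^{-1}\to r_P$ locally uniformly, $\psi_k(\eta_k)\to(0,-1)$, and the rescaled Catlin metrics converge to $M_{r_P}$. Then I would consider the composed maps $F_k:=\psi_k\circ\phi_k:\Omega\longrightarrow\psi_k(\Omega)$; these are biholomorphisms onto their images and they send the fixed interior point $p$ to $F_k(p)=\psi_k(\eta_k)\to(0,-1)\in\Omega_P$. The idea is to show that $\{F_k\}$ is a normal family and that a locally uniform limit $F_\infty:\Omega\longrightarrow\Omega_P$ is a biholomorphism. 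Normality on the source side follows because $\Omega$ is complete Kobayashi hyperbolic (which one gets from the existence of the finite type boundary point together with the Catlin estimates of Theorem \ref{CatlinTh}, giving local completeness that propagates via the automorphisms), so the $F_k$ omit enough of $\C^2$ and are locally bounded; normality of the inverses $F_k^{-1}$ on the target side follows from the uniform Catlin estimates and the fact that $\Omega_P$ is complete hyperbolic for $d^C_{r_P}$ (as proved earlier in the model-domain section). One then checks that the limit $F_\infty$ is injective, nonconstant, and surjective onto $\Omega_P$ by a standard argument (Cartan-type, using that $dF_\infty$ is nondegenerate at $(0,-1)$ because the Kobayashi/Catlin metrics are preserved in the limit), yielding a biholomorphism $\Omega\cong\Omega_P$.

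Next I would exploit the automorphisms to upgrade $\Omega_P$ to a \emph{homogeneous} model. The maps $G_k:=F_k\circ\phi_k^{-1}\circ F_k^{-1}$ are automorphisms of $\psi_k(\Omega)$, and after passing to the limit they produce a one-parameter family of automorphisms of $\Omega_P$ realizing arbitrary dilations; concretely, the scaling parameters $\tau_k,\epsilon_k$ together with the relation $\phi_k(p)=\eta_k\to\xi$ force the existence, in the limit, of the parabolic/dilation automorphisms $\Phi_\lambda(z,w)=(\lambda z,\lambda^m w)$ of $\Omega_P$ for all $\lambda>0$. Invariance of $\Omega_P$ under these nonisotropic dilations is exactly the statement that the defining polynomial $P$ satisfies $P(\lambda z)=\lambda^m P(z)$, i.e. $P=H$ is homogeneous of degree $m$. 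Subharmonicity and absence of harmonic terms are inherited directly from the scaling construction of Theorem \ref{scalingth}, so $H$ has all the required properties and $\Omega$ is biholomorphic to $\Omega_H$.

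The main obstacle I expect is establishing the normality of the family $\{F_k\}$ and of the inverses $\{F_k^{-1}\}$ simultaneously, together with the nondegeneracy of the limit, since the domains $\psi_k(\Omega)$ vary with $k$ and only converge to $\Omega_P$ in a Carathéodory-type sense. This is precisely where the uniform comparison between $M_{r_k}$ and the pulled-back Catlin metric (part (2) of Theorem \ref{scalingth}) and the uniform lower bounds of Lemma \ref{unifgold} do the essential work: they prevent the limit map from degenerating or sending interior points to the boundary, so that $F_\infty$ and $F_\infty^{-1}$ are both well defined and holomorphic. Verifying that the limiting dilations are genuinely automorphisms of $\Omega_P$ (rather than merely of the varying domains) is the other delicate point, and it rests on the local uniform convergence $r_k\to r_P$ controlling the boundary behaviour under the rescaled $\phi_k$.
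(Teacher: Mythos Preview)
The paper does not prove this statement at all: it is quoted verbatim as ``the following result by Berteloot'' (with reference \cite{Bert}) and then used as a black box to deduce Theorem~\ref{ncautoth}. So there is no proof in the paper to compare your proposal against; what follows is a comparison with Berteloot's actual argument.

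Your first step---applying the scaling of Theorem~\ref{scalingth} to $\eta_k=\phi_k(p)$, forming $F_k=\psi_k\circ\phi_k$, and extracting a biholomorphism $F_\infty:\Omega\to\Omega_P$ by a normal-families argument on $\{F_k\}$ and $\{F_k^{-1}\}$---is indeed the core of Berteloot's method, and the obstacles you flag (normality on varying targets, nondegeneracy of the limit) are exactly the ones he handles. This part of your plan is sound in outline.

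The second step, however, has a genuine gap. You assert that the maps $G_k:=F_k\circ\phi_k^{-1}\circ F_k^{-1}$ converge to the anisotropic dilations $\Phi_\lambda(z,w)=(\lambda z,\lambda^m w)$ for \emph{all} $\lambda>0$, and that this forces $P$ to be homogeneous. But $G_k=\psi_k\circ\phi_k\circ\phi_k^{-1}\circ\phi_k^{-1}\circ\psi_k^{-1}=\psi_k\circ\phi_k^{-1}\circ\psi_k^{-1}$ is a single sequence of maps, not a one-parameter family, and nothing in your setup shows its limits are the $\Phi_\lambda$; in fact there is no reason for the conjugated automorphisms to be dilations at all. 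Berteloot's route to homogeneity is different: having obtained $\Omega\cong\Omega_P$ for \emph{some} $P$, he transports the non-compact automorphism group to $\Omega_P$ and then performs a \emph{second} scaling inside $\Omega_P$, using a weight/degree-minimality argument (the rescaling in the normal form kills the lower-order terms of $P$ while the finite-type hypothesis bounds the degree from above) to conclude that the new limit polynomial is homogeneous of degree~$m$. Your proposal does not reproduce this mechanism, and as written the homogeneity conclusion is unjustified.
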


\section{Pseudoconvex bounded smooth finite type domains are Gromov hyperbolic}

Now we want to prove our main result (Theorem \ref{mainth}). As in the previous section, we start studying the behavior of the geodesics in a pseudoconvex smooth finite type domain in $\C^2$. First of all, we need to recall the following result about visibility (in this section the geodesics are with respect to
the Kobayashi distance).

\begin{theorem}\cite{BZ}\label{BhaZim} Let $\Omega\subset\C^2$ be bounded smooth pseudoconvex domain of finite type, fix $A\geq1, B\geq0$. If $\xi,\eta\in\partial\Omega$ and $V_\xi,V_\eta$ are neighborhood of $\xi,\eta\in \overline{\Omega}$ so that $\overline{V_\xi}\cap\overline{V_\eta}=\emptyset$ then there exists a compact set $K\subset\Omega$ with the following property. if $\sigma:[0,T]\longrightarrow\Omega$ is an $(A,B)$ quasi-geodesic with respect to the Kobayashi metric, with $\sigma(0)\in V_\xi$ and $\sigma(T)\in V_\eta$ then $\sigma\cap K\neq\emptyset$.
\end{theorem}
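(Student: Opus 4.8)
The plan is to prove the contrapositive statement in a compactness-by-contradiction form, exactly mirroring the visibility arguments already developed for model domains (Proposition \ref{visibscal} and its corollaries). Suppose the conclusion fails: then there is no compact $K$ doing the job, so for every $n$ there is an $(A,B)$ quasi-geodesic $\sigma_n:[0,T_n]\to\Omega$ with $\sigma_n(0)\in V_\xi$, $\sigma_n(T_n)\in V_\eta$, whose image escapes every compact set, i.e. $\sup_{t}\,\mathrm{dist}(\sigma_n(t),\partial\Omega)\to 0$ along a suitable reparametrization. Choose $t_n\in[0,T_n]$ with $|r(\sigma_n(t_n))|=\max_t |r(\sigma_n(t))|\to 0$ and set $p_n:=\sigma_n(t_n)$; passing to a subsequence, $p_n\to\eta_\infty\in\partial\Omega$. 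Since $\overline{V_\xi}\cap\overline{V_\eta}=\emptyset$, at least one of the two endpoint neighborhoods, say $V_\xi$, is at positive Euclidean distance from $\eta_\infty$; hence the portion of $\sigma_n$ joining $p_n$ to $\sigma_n(0)$ has endpoints whose Euclidean distance stays bounded below by some $\epsilon>0$.

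Next I would feed this picture into the scaling machinery of Theorem \ref{scalingth}. Apply the scaling $\psi_n$ at the sequence $p_n\to\eta_\infty$, obtaining $\tilde\sigma_n:=\psi_n\circ\sigma_n$, domains $\psi_n(\Omega\cap U)$ with $r_n\to r_P$ and $M_{r_n}\to M_{r_P}$ locally uniformly, and $\psi_n(p_n)\to(0,-1)$. By Catlin's comparison (Theorem \ref{CatlinTh}) together with part (2) of Theorem \ref{scalingth}, the $\tilde\sigma_n$ are $(A',B')$ quasi-geodesics for the Catlin metrics $M_{r_n}$ on $\psi_n(\Omega\cap U)$ with constants close to $(A,B)$, uniformly in $n$; near the relevant boundary point this is a genuine quasi-geodesic comparison because the Kobayashi and Catlin metrics are boundedly equivalent there. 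Restricting to a piece of $\tilde\sigma_n$ that stays in a fixed ball $B_R(0)$ and has Euclidean diameter bounded below, the hypotheses of Proposition \ref{visibscal} are met, so a subsequence of a recentering $t\mapsto\tilde\sigma_n(t+S_n)$ converges locally uniformly to an $(A',B')$ quasi-geodesic line $\tilde\sigma:\R\to\Omega_P$.

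Finally I would derive the contradiction by a Gromov-product estimate, as in Corollary \ref{limgeo} and the proof of Theorem \ref{mainthmod}. Because $\max_t|r_n(\tilde\sigma_n(t))|\to 0$ on the escaping piece while $\tilde\sigma_n(S_n)\to(0,-1)\in\Omega_P$, one shows (exactly as in Claims 1--2 of Proposition \ref{visibscal}) that any limiting recentered quasi-geodesic would have to be simultaneously constant and non-constant, unless the relevant endpoint actually lies in $\Omega_P$; but $\eta_\infty\in\partial\Omega$ forces the corresponding limit into $\partial\Omega_P$, and the distance $d^C_{r_n}(\tilde\sigma_n(S_n),\tilde\sigma_n(\text{other recenter}))$ is forced to $+\infty$ by the quasi-geodesic lower bound $A'^{-1}|t|-B'$, contradicting local uniform convergence to a point of $\Omega_P$. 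I expect the main obstacle to be bookkeeping the localization: Catlin's estimate is only available in a fixed neighborhood $U$ of the relevant boundary point, so one must first argue — using boundedness of $\Omega$, finite type, and a standard localization of the Kobayashi metric near $\partial\Omega$ — that once a quasi-geodesic comes close enough to $\partial\Omega$ it is trapped near a single boundary point, so that the whole escaping argument genuinely takes place inside one chart $U$ where Theorem \ref{scalingth} applies; the rest is a faithful transcription of the model-domain visibility proofs.
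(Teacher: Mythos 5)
The paper does not actually prove this statement: Theorem \ref{BhaZim} is imported verbatim from Bharali--Zimmer \cite{BZ}, where it is obtained by a length-versus-distance argument --- a quantitative lower bound for the Kobayashi metric near the boundary (which for finite type domains in $\C^2$ follows from Catlin's estimates, essentially the unscaled version of the first inequality in Lemma \ref{unifgold}) combined with the upper bound $d^K_\Omega(p,q)\leq C+\frac{1}{2}\log\frac{1}{\delta_\Omega(p)}+\frac{1}{2}\log\frac{1}{\delta_\Omega(q)}$; together these force any curve joining $V_\xi$ to $V_\eta$ inside a thin collar $\{\delta_\Omega<\epsilon\}$ to have Kobayashi length far exceeding the Kobayashi distance between its endpoints, so an $(A,B)$ quasi-geodesic cannot stay in the collar. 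Your proposal replaces this with the paper's scaling machinery. That is a genuinely different route, and as written it has two gaps I do not see how to close.

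First, the localization step is circular. You need the escaping arc of $\sigma_n$ to live in a single chart $U$ around one boundary point before Catlin's comparison and Theorem \ref{scalingth} are even available, and you propose to get this from the claim that a quasi-geodesic sufficiently close to $\partial\Omega$ is ``trapped near a single boundary point''. That claim is itself a form of the visibility property you are trying to prove; standard localization results compare $K_\Omega$ with $K_{\Omega\cap U}$ but say nothing about confinement of quasi-geodesics. Second, the final contradiction is never actually reached. Scaling at the deepest points $p_n=\sigma_n(t_n)$ normalizes $r$ by $\epsilon_n\asymp|r(p_n)|=\max_t|r(\sigma_n(t))|$, so along the rescaled curve one has $|r_n(\tilde\sigma_n(t))|\leq|r_n(\tilde\sigma_n(t_n))|\rightarrow1$, not $\rightarrow0$; the constant/non-constant dichotomy in the proof of Proposition \ref{visibscal} is only triggered on arcs where the rescaled defining function tends to zero, which by construction is exactly what fails on the arc around $t_n$. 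Nor is $\tilde\sigma_n(t_n)\rightarrow(0,-1)\in\Omega_P$ by itself in conflict with ``$\sigma_n$ avoids every compact set'', since $\psi_n^{-1}$ of a neighborhood of $(0,-1)$ is a shrinking neighborhood of $\eta_\infty\in\partial\Omega$, not a fixed compact subset of $\Omega$. If you want a self-contained proof in the spirit of the paper, the direct Bharali--Zimmer estimate sketched above is both more elementary and non-circular.
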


\begin{corollary}\label{gbgen}
Let $\Omega\subset\C^2$ be bounded smooth pseudoconvex domain of finite type, let $p_n, q_n\in\Omega$ be two sequence with $p_n\rightarrow\xi^+\in\partial\Omega$ and $q_n\rightarrow\xi^-\in\partial\Omega$, and 
$$\lim_{n,m\rightarrow\infty}(p_n|q_m)_o=\infty,$$
then $\xi^+=\xi^-$ (the Gromov product is with respect to $d_\Omega^K$).
\end{corollary}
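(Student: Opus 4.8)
The plan is to argue by contradiction, mirroring the structure of Corollary \ref{gbmodel} but using the visibility statement Theorem \ref{BhaZim} in place of the scaling-based visibility (Proposition \ref{visibscal}) that was available in the model-domain setting. So suppose $\xi^+ \neq \xi^-$. Since $\partial\Omega$ is compact and $\xi^+\neq\xi^-$, we may choose neighborhoods $V_{\xi^+}, V_{\xi^-}$ of $\xi^+,\xi^-$ in $\overline{\Omega}$ with $\overline{V_{\xi^+}}\cap\overline{V_{\xi^-}}=\emptyset$. Because $\Omega$ is bounded smooth pseudoconvex of finite type, it is complete Kobayashi hyperbolic and, by Hopf--Rinow, $(\Omega,d^K_\Omega)$ is a proper geodesic space; hence for each $n$ we may pick a geodesic segment $\sigma_n\colon[0,T_n]\longrightarrow\Omega$ with $\sigma_n(0)=p_n$ and $\sigma_n(T_n)=q_n$. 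Eventually $p_n\in V_{\xi^+}$ and $q_n\in V_{\xi^-}$, so Theorem \ref{BhaZim} (applied with $A=1$, $B=0$) furnishes a compact set $K\subset\Omega$ with $\sigma_n\cap K\neq\emptyset$ for all large $n$: pick $t_n\in[0,T_n]$ with $w_n:=\sigma_n(t_n)\in K$.

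The key computation is then the standard one relating the Gromov product at $o$ to the Gromov product at a point on a geodesic. Recall that along a geodesic $\gamma$ one has $(\gamma(0)\,|\,\gamma(T))_{\gamma(t)}=0$ for $0\le t\le T$, and that for any three points and any two base points $|(x|y)_{o}-(x|y)_{o'}|\le d^K_\Omega(o,o')$. Applying these with $\gamma=\sigma_n$, $x=p_n$, $y=q_n$, $o'=w_n$ gives
$$(p_n\,|\,q_n)_o \;\le\; (p_n\,|\,q_n)_{w_n} + d^K_\Omega(w_n,o) \;=\; d^K_\Omega(w_n,o).$$
Since $w_n\in K$ and $K$ is compact, $\sup_n d^K_\Omega(w_n,o)=:D<\infty$, so $(p_n\,|\,q_n)_o\le D$ for all large $n$. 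This contradicts the hypothesis $\lim_{n,m\to\infty}(p_n|q_m)_o=\infty$ (take $m=n$), and therefore $\xi^+=\xi^-$.

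The only genuine input beyond bookkeeping is the visibility property Theorem \ref{BhaZim}, which is quoted from \cite{BZ}; everything else is formal manipulation of the Gromov product plus completeness/properness of the Kobayashi geodesic space, the latter being precisely the content recorded for finite type domains in $\C^2$. The one point to be a little careful about is that the hypothesis controls the limit of $(p_n|q_m)_o$ as $n$ and $m$ independently go to infinity, so one should make sure the diagonal sequence $(p_n|q_n)_o$ is the relevant quantity; since $\lim_{n,m}(p_n|q_m)_o=\infty$ forces in particular $\lim_n (p_n|q_n)_o=\infty$, the argument above applies verbatim. I do not expect any real obstacle here — this corollary is the ``general domain'' analogue of Corollary \ref{gbmodel}, obtained by substituting the honest boundary-visibility theorem for the scaling argument.
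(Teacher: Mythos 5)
Your argument is correct and is essentially identical to the paper's proof: contradiction via disjoint neighborhoods, the visibility Theorem \ref{BhaZim} to force the geodesics $[p_n,q_n]$ through a fixed compact set $K$, and the standard Gromov-product estimate $(p_n|q_n)_o\leq d^K_\Omega(\sigma_n(T_n),o)\leq\max\{d^K_\Omega(x,o):x\in K\}<\infty$. The paper states this last inequality without unpacking it; you merely spell out the base-point-change bound and the vanishing of the Gromov product along a geodesic, which is the same computation.
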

\proof
Suppose by contradiction that $\xi^+\neq\xi^-$ and consider $\sigma_n:[a_n,b_n]\longrightarrow\Omega$ be a sequence of geodesics such that $\sigma_n(a_n)=p_n$ and $\sigma_n(b_n)=q_n$. Now from Theorem \ref{BhaZim}, there exists a compact set $K\subset\Omega$ and $T_n\in[a_n,b_n]$ such that $\sigma_n(T_n)\in K$. 
Finally
$$(p_n|q_n)_o=(\sigma_n(a_n)|\sigma_n(b_n))_o\leq d_\Omega^K(\sigma_n(T_n),o)\leq\max\{d_\Omega^K(x,o):x\in K\}<\infty$$
which is a contradiction.  
\endproof

\begin{proposition}\label{behgeogen}
Let $\Omega\subset\C^2$ be bounded smooth pseudoconvex domain of finite type and $\sigma:\R\longrightarrow\Omega$ be a geodesic line. Then both 
$$\lim_{t\rightarrow\infty}\sigma(t),\ \ \lim_{t\rightarrow-\infty}\sigma(t)$$
exist in $\partial{\Omega}$ and are different.
\end{proposition}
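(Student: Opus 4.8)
The plan is to mirror the argument used for model domains (Proposition \ref{behgeomode}), replacing the scaling/visibility input for $\Omega_P$ with the genuine visibility statement Theorem \ref{BhaZim} and its consequence Corollary \ref{gbgen}. First I would show that $\lim_{t\to+\infty}\sigma(t)$ exists in $\overline{\Omega}$ (the case $t\to-\infty$ being identical). Since $\Omega$ is bounded, $\overline{\Omega}$ is compact, so it suffices to rule out two distinct subsequential limits. Suppose $t_k,s_k\to+\infty$ with $\sigma(t_k)\to p$ and $\sigma(s_k)\to q$, $p\neq q$. Because $\sigma$ is a geodesic, a direct computation of the Gromov product based at $\sigma(0)$ gives
\begin{equation*}
(\sigma(t_k)\,|\,\sigma(s_k))_{\sigma(0)}=\tfrac12\bigl[t_k+s_k-|t_k-s_k|\bigr]=\min\{t_k,s_k\}\longrightarrow\infty .
\end{equation*}
Since $|(x|y)_o-(x|y)_{o'}|\le d^K_\Omega(o,o')$, the same product based at the fixed point $o$ also tends to $\infty$. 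By Corollary \ref{gbgen} this forces $p=q$, a contradiction; hence the limit exists. The same reasoning shows $\lim_{t\to-\infty}\sigma(t)$ exists.

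Next I would argue that both limit points lie on $\partial\Omega$ rather than in $\Omega$. If $\xi^+:=\lim_{t\to+\infty}\sigma(t)\in\Omega$, then $\sigma(t)$ stays in a compact subset of $\Omega$ for $t$ large, so $d^K_\Omega(\sigma(0),\sigma(t))$ would be bounded (the Kobayashi distance between a fixed point and points of a fixed compact set is bounded); but $d^K_\Omega(\sigma(0),\sigma(t))=t\to\infty$ since $\sigma$ is a geodesic, a contradiction. So $\xi^+\in\partial\Omega$, and likewise $\xi^-\in\partial\Omega$.

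Finally I would prove $\xi^+\neq\xi^-$. For a geodesic one has $(\sigma(-t)\,|\,\sigma(t))_{\sigma(0)}\equiv 0$ for all $t\ge 0$, hence $(\sigma(-t)\,|\,\sigma(t))_{o}$ stays bounded (by $d^K_\Omega(\sigma(0),o)$) as $t\to\infty$. If we had $\xi^+=\xi^-$, then applying Corollary \ref{gbgen} to the sequences $p_n=\sigma(n)$ and $q_n=\sigma(-n)$ would be vacuous in that direction; instead I use the contrapositive: since the Gromov product $(\sigma(-n)\,|\,\sigma(n))_o$ does not go to infinity, and the two boundary limits are distinct iff the product fails to diverge — more precisely, I invoke the visibility Theorem \ref{BhaZim} directly. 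Assume $\xi^+=\xi^-=:\zeta\in\partial\Omega$; pick disjoint neighborhoods is impossible, so instead pick a single neighborhood $V$ of $\zeta$ and a neighborhood $W$ of another boundary point, or simply observe that $\sigma$ restricted to $[-n,n]$ is a geodesic with both endpoints tending to $\zeta$; choosing neighborhoods forces the whole segment into $V$ for large $n$, contradicting that a geodesic between points near $\zeta$ both leave $V$... The clean route, which I would take, is: the boundary point must be distinct because otherwise $(\sigma(-n)|\sigma(n))_o$ stays bounded, and one checks this is incompatible with $\xi^+=\xi^-$ via the fact that for sequences converging to the same boundary point the Gromov product diverges — but that is exactly what needs Theorem \ref{BhaZim} in the other direction, namely that points converging to the \emph{same} smooth finite-type boundary point have divergent Gromov product. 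I would therefore cite (or prove via Catlin's estimate near $\zeta$, as in the lower bound \eqref{estimcat}) that $p_n,q_n\to\zeta\in\partial\Omega$ implies $(p_n|q_n)_o\to\infty$; combined with $(\sigma(-n)|\sigma(n))_o$ bounded, this yields $\xi^+\neq\xi^-$. The main obstacle is precisely this last step: establishing divergence of the Gromov product for sequences approaching a common finite-type boundary point, for which the localized Catlin estimate near $\zeta$ (giving a lower bound of the form $d^K_\Omega(p,q)\gtrsim|\log|r(p)||+|\log|r(q)||-C$ type behavior, as in \eqref{estimcat}) is the right tool.
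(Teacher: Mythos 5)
Your first two steps match the paper: existence of $\lim_{t\to\pm\infty}\sigma(t)$ via the computation $(\sigma(t_k)|\sigma(s_k))_{\sigma(0)}=\min\{t_k,s_k\}\to\infty$ together with Corollary \ref{gbgen} is exactly the paper's argument (imported from Proposition \ref{behgeomode}), and the observation that the limits must lie on $\partial\Omega$ because $d^K_\Omega(\sigma(0),\sigma(t))=t\to\infty$ is correct.

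The final step, however, has a genuine gap, and you have identified its location without closing it. You need the converse of Corollary \ref{gbgen}: that $p_n,q_n\to\zeta\in\partial\Omega$ forces $(p_n|q_n)_o\to\infty$. This is not available anywhere in the paper for a general bounded finite type domain (Proposition \ref{prop1model} proves it only for model domains, and its proof crucially uses that $\xi+(0,-e^{-T})$ lies in $\Omega_P$ for every boundary point $\xi$, i.e.\ the translation structure of the model, which has no analogue in $\Omega$). Worse, the tool you propose is the wrong one: the Catlin-type lower bound \eqref{estimcat}, of the form $d^K_\Omega(p,q)\gtrsim|\log|r(p)||+|\log|r(q)||-C$, bounds $d(p_n,q_n)$ from \emph{below}, hence bounds the Gromov product $(p_n|q_n)_o=\tfrac12[d(p_n,o)+d(q_n,o)-d(p_n,q_n)]$ from \emph{above}; divergence of the product requires an efficient \emph{upper} bound on $d(p_n,q_n)$ exploiting the common limit point, which is a substantially harder estimate and is essentially equivalent to the boundary identification one is trying to prove. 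The paper avoids this entirely: assuming $\xi^+=\xi^-=\xi$, it rescales at $\xi$ via Theorem \ref{scalingth} (with $\eta_n=(0,-n^{-1})$), notes that the two tails $\sigma|_{[\tau^+,\infty)}$ and $\sigma|_{(-\infty,\tau^-]}$ are $(A,0)$ quasi-geodesics for the Catlin metric near $\xi$, applies Proposition \ref{visibscal} to extract limiting quasi-geodesic lines $\hat\sigma^\pm$ in the model domain $\Omega_P$ based at times $T_n^+\to+\infty$, $T_n^-\to-\infty$, and then derives the contradiction $d^C_{\Omega_P}(\hat\sigma^+(0),\hat\sigma^-(0))\geq\lim_n A^{-1}|T_n^+-T_n^-|=\infty$. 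You would need to replace your last paragraph with this blow-up argument (or supply an independent proof of the divergence of the Gromov product, which the paper does not).
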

\proof
The existence of the limit is the same argument of Proposition \ref{behgeomode}.

For the other part, if by contradiction there exists a geodesic ray $\sigma:\R\longrightarrow\Omega$ such that $\lim_{t\rightarrow\infty}\sigma(t)= \lim_{t\rightarrow-\infty}\sigma(t)=:\xi$, after an affine transformation we can suppose that $\xi=(0,0)$ and $r$ is of the form (\ref{tayexp}). Let $R>0$ such that in $\Omega\cap B_R(0)$ the Catlin metric is well defined and it is equivalent to Kobayashi. Now define 
$$\tau^+:=\inf\{t\in[0,\infty): \sigma([t,\infty))\subset B_{R\slash2}(0)\}, $$
$$\tau^-:=\sup\{t\in(-\infty,0]: \sigma((-\infty,t])\subset B_{R\slash2}(0)\}.$$
Notice that  $\sigma^+:=\sigma|_{[\tau^+,\infty)}$, $\sigma^-:=\sigma|_{(-\infty,\tau^-]}$ are $(A,0)$ quasi-geodesics with respect to the Catlin metric.
Let $\eta_n:=(0,-n^{-1})$ and consider $\{\psi_n\}_{n\in\N}$ and $P:\C\longrightarrow\R$ be as in Theorem \ref{scalingth}. Denote $\tilde{\sigma}_n^+:=\psi_n\circ\sigma^+$ and $\tilde{\sigma}_n^-:=\psi_n\circ\sigma^-$, and notice that $\lim_{t\rightarrow\infty}\tilde{\sigma}^+_n(t)=\lim_{t\rightarrow-\infty}\tilde{\sigma}^-_n(t)=(0,0)$ and $\lim_{n\rightarrow\infty}\tilde{\sigma}^+_n(\tau^+)=\lim_{n\rightarrow\infty}\tilde{\sigma}^-_n(\tau^-)=\infty$, then using Proposition \ref{visibscal} there exists $T_n^+>0$ and $T_n^-<0$ such that $t\rightarrow \tilde{\sigma}_n^+(t+T^+_n)$ converges to a $(A,0)$ quasi-geodesic line $\hat{\sigma}^+:\R\longrightarrow\Omega_P$ and $t\rightarrow \tilde{\sigma}_n^-(t+T^-_n)$ converges to a $(A,0)$ quasi-geodesic line $\hat{\sigma}^-:\R\longrightarrow\Omega_P$. Finally,  $T_n^+\rightarrow\infty$ and $T_n^-\rightarrow-\infty$, then
\begin{align*}
d^C_{\Omega_P}(\hat{\sigma}^+(0),\hat{\sigma}^-(0))&=\lim_{n\rightarrow\infty}d^C_{r_n}(\tilde{\sigma}_n^+(T_n^+),\tilde{\sigma}_n^-(T_n^-))\\&=\lim_{n\rightarrow\infty}d^C_{r}(\sigma(T_n^+),\sigma(T_n^-))\\&\geq\lim_{n\rightarrow\infty} A^{-1}d^K_\Omega((\sigma(T_n^+),\sigma(T_n^-))\\&=\lim_{n\rightarrow\infty} A^{-1}|T_n^+-T_n^-|=\infty
\end{align*}
which is a contradiction. 
\endproof

Now we can prove the main Theorem

\begin{proof}[Proof of Theorem~\ref{mainth}]
\textbf{Part 1: $(\Omega,d^K_\Omega)$ is Gromov hyperbolic}

By contradiction, suppose that $(\Omega,d^K_{\Omega})$ is not Gromov hyperbolic, then there exist three sequences of points $\{x_n\}_{n\in\N},\{y_n\}_{n\in\N},\{z_n\}_{n\in\N}$, correspondent geodesic segment $\{\sigma_{x_n,y_n}\}_{n\in\N}$, $\{\sigma_{y_n,z_n}\}_{n\in\N}$, $\{\sigma_{x_n,z_n}\}_{n\in\N}$ and a point $u_n\in\sigma_{x_n,y_n}$ such that 
$$\lim_{n\rightarrow\infty}d^K_{\Omega}(u_n,\sigma_{x_n,z_n}\cup\sigma_{y_n,z_n})=\infty.$$
Up to passing to a subsequences, we can suppose that  $u_n\rightarrow\xi\in\overline{\Omega}$. We have two cases

\textbf{Case 1:} $\xi\in\Omega$.

First of all we can suppose that $x_n, y_n,z_n$ converge respectively to $x_\infty,y_\infty,z_\infty\in\overline{\Omega}$, and since
$$\lim_{n\rightarrow\infty} d^K_{\Omega}(u_n,\{x_n,y_n,z_n\})\geq\lim_{n\rightarrow\infty} d^K_{\Omega}(u_n,\sigma_{x_n,z_n}\cup\sigma_{y_n,z_n})=\infty$$
we have $x_\infty,y_\infty,z_\infty\in\partial\Omega$.
Then after a reparametrisation, we can suppose that $\sigma_{x_n,y_n}(0)=u_n$, and using Ascoli-Arzelà theorem $\sigma_{x_n,y_n}$ converges uniformly on compact sets to a geodesic line $\sigma:\R\longrightarrow\Omega$. Now by Proposition \ref{behgeogen} we have $x_\infty\neq y_\infty$, then one of the two is different from $z_\infty$, for example $x_\infty\neq z_\infty$. Now using Theorem \ref{BhaZim}, there exist a compact $K\subset\Omega$ and $T_n\in\R$ such that  $\sigma_{x_n,z_n}(T_n)\in K$. Finally
\begin{align*}
\max\{d^K_\Omega(\xi,x):x\in K\}&\geq\lim_{n\rightarrow\infty}d^K_\Omega(\sigma(0),\sigma_{x_n,z_n}(T_n))\\&=\lim_{n\rightarrow\infty}d^K_\Omega(\sigma_{x_n,y_n}(0),\sigma_{x_n,z_n}(T_n))\\&\geq \lim_{n\rightarrow\infty}d^K_\Omega(u_n,\sigma_{x_n,z_n}\cup\sigma_{y_n,z_n})=\infty
\end{align*}
and this is a contradiction.

\textbf{Case 2:} $\xi\in\partial\Omega$.

After an affine transformation, we can suppose that $\xi=(0,0)$ and $r$ is of the form (\ref{tayexp}). Using Theorem \ref{CatlinTh} there exists a $A\geq1$ and $R>0$, such that
$$A^{-1}M_r(p,v)\leq K_\Omega(p,v)\leq AM_r(p,v), \ \forall p\in B_R(0)\cap\Omega, v\in\C^2.$$
Let $\{\psi_n\}_{n\in\N}$ and $P$ be as in Theorem \ref{scalingth}, then
up to passing to subsequences $\psi_n(x_n), \psi_n(y_n)$ and $\psi_n(z_n)$ converge respectively to $x_\infty, y_\infty$ and $z_\infty$ in $\overline{\Omega_P}\cup\{\infty\}$. Now
we have to divide the problem in different cases. 

If both $x_\infty$ and $y_\infty$ are in $\partial\Omega_P$, then eventually $\sigma_{x_n,y_n}\subseteq B_R(0)$ and consequently $\psi_n\circ \sigma_{x_n,y_n}$ is an $(A,0)$ quasi-geodesic with respect to the Catlin metric that converges to an $(A,0)$ quasi-geodesic line $\sigma:\R\longrightarrow\Omega_P$ of ends point $x_\infty$ and $y_\infty$ (that are different, using Lemma \ref{behqgeomode}). Now after a labelling we can suppose that $x_\infty\neq z_\infty$, and we can define 
$$t_n=\sup\{t\in[a_n,b_n]: \sigma_{x_n,z_n}([a_n,t])\subset B_{R\slash2}(0)\},$$
and set $z'_n:=\sigma_{x_n,z_n}(t_n)$. Notice that 
$\lim_{n\rightarrow\infty}\psi_n(z'_n)=\lim_{n\rightarrow\infty}\psi_n(z_n)=z_\infty$ and $\sigma_{x_n,z'_n}:=\sigma_{x_n,z_n}|_{[a_n,t_n]}$ is in $B_R(0)$ and so it is a $(A,0)$ quasi-geodesic with respect to $M_r$.
Now by Proposition \ref{visibscal}, $\psi_n\circ\sigma_{x_n,z'_n}$ converges locally uniformly to the geodesic $\hat{\sigma}:\R\longrightarrow\Omega_P$. Thus,
\begin{align*}
d^C_{\Omega_P}((0,-1),\hat{\sigma}(0))&=\lim_{n\rightarrow\infty}d^C_{r}(u_n,\sigma_{x_n,z'_n}(0))\\&\geq A^{-1}\lim_{n\rightarrow\infty}d^K_{\Omega}(u_n,\sigma_{x_n,z'_n}(0))\\&\geq A^{-1}\lim_{n\rightarrow\infty}d^K_{\Omega}(u_n,\sigma_{x_n,z_n}\cup\sigma_{y_n,z_n})=\infty
\end{align*}

In the other case, we can always suppose that $x_\infty\in\partial\Omega_P$ and $y_\infty=\infty$. If $z_\infty=\infty$ we can find a $z_n''\in\sigma_{x_n,z_n}$ such that $\psi_n\circ\sigma_{x_n,z''_n}$ converges to a $(A,0)$ quasi-geodesic line, otherwise if $z_\infty\in\partial\Omega_P$ there exists $y'_n\in\sigma_{y_n,z_n}$ such that $\psi_n\circ\sigma_{y'_n,z_n}$ converges to a $(A,0)$ quasi-geodesic line, and using the previous argument we have a contradiction.

In all the cases we have a contradiction, then $(\Omega, d^K_{\Omega})$ is Gromov hyperbolic.

\textbf{Part 2: The identity map $\Omega\longrightarrow\Omega$
extends to a homeomorphism $\overline{\Omega}^G\longrightarrow\overline{\Omega}$.}

It is essentially identical to the proof of Proposition 1.2 in \cite{Zim1}, but for the reader's convenience we provide the complete argument.

We define the map
$$\Phi: \overline{\Omega}^G\longrightarrow\overline{\Omega}$$
as the identity in $\Omega$ and  $\Phi([\sigma])=\lim_{t\rightarrow+\infty}\sigma(t)$ if $[\sigma]\in\partial_G\Omega$.

First of all we prove that $\Phi$ is well defined. Indeed, given a geodesic ray $\sigma:[0,\infty)\longrightarrow\Omega$, $\lim_{t\rightarrow+\infty}\sigma(t)$ exists in $\C^2$ by Proposition \ref{behgeogen}. Let $\sigma_1$ and $\sigma_2$ be two geodesic rays with $\sigma_1(0)=\sigma_2(0)=o\in\Omega$ and 
$$\sup_{t\geq0}d^K_\Omega(\sigma_1(t),\sigma_2(t))<\infty$$
then
$$\lim_{t\rightarrow+\infty}(\sigma_1(t)|\sigma_2(t))_o=\infty$$
which implies by Corollary \ref{gbgen} that
$$\lim_{t\rightarrow+\infty}\sigma_1(t)=\lim_{t\rightarrow+\infty}\sigma_2(t),$$
thus $\Phi$ is well defined.
Now since $\overline\Omega$ is compact, it is sufficient to prove that $\Phi$ is continuous, injective and surjective. 

For the continuity, we prove that if $\xi_n\rightarrow\xi\in\partial\Omega$ then $\Phi(\xi_n)\rightarrow\Phi(\xi)$. Fix $o\in\Omega$ and consider for each $n\in\N$ $T_n\in[0,+\infty]$ and $\sigma_n:[0,T_n)\rightarrow\Omega$ such that $\sigma_n(0)=o$ and $\lim_{t\rightarrow T_n}\sigma_n(t)=\xi_n$. Since $T_n$ could be $\infty$, let $T'_n\in(0,T_n)$ such that 
$$\lim_{n\rightarrow\infty}\sigma_n(T'_n)=\lim_{n\rightarrow\infty}\Phi(\xi_n)=\xi.$$ Now by Ascoli-Arzelà theorem, $\sigma_n$ (up to subsequences) converges locally uniformly to a geodesic ray $\sigma:[0,+\infty)\longrightarrow\Omega$, and with a similar argument used in Corollary \ref{limgeo} 
$$\lim_{t\rightarrow+\infty}\sigma(t)=\lim_{n\rightarrow\infty}\Phi(\xi_n)=\xi$$
so $\Phi$ is continuous. 

For the injectivity, let $\sigma_1,\sigma_2$ be two geodesic rays such that $\lim_{t\rightarrow+\infty}\sigma_1(t)=\lim_{t\rightarrow+\infty}\sigma_2(t)=\xi\in\partial\Omega$ but $[\sigma_1]\neq[\sigma_2]$ in $\overline{\Omega}^G$. Now, recalling that every two distinct points in $\partial^G\Omega$ can be joint by a geodesic line, there exists a geodesic line $\sigma:\R\longrightarrow\Omega$ such that 
$$\sup_{t\geq0}d_\Omega^K(\sigma(-t),\sigma_1(t))<K\ \ \mbox{and}\ \ \sup_{t\geq0}d_\Omega^K(\sigma(t),\sigma_2(t))<K$$
and consequentially  
$$\lim_{t\rightarrow+\infty}(\sigma(-t)|\sigma_1(t))_o=\lim_{t\rightarrow+\infty}(\sigma(t)|\sigma_2(t))_o=\infty $$
that implies using Corollary \ref{gbgen} that 
$$\lim_{t\rightarrow-\infty}\sigma(t)=\lim_{t\rightarrow\infty}\sigma_1(t)=\lim_{t\rightarrow\infty}\sigma_2(t)=\lim_{t\rightarrow\infty}\sigma(t)$$
but this contradicts Proposition \ref{behgeogen}.

Finally for the surjectivity, let $\xi\in\partial\Omega$  and $x_n\in\Omega\rightarrow\xi$. Fix $o\in\Omega$ and consider $\sigma_n:[0,T_n]\longrightarrow\Omega$ such that $\sigma_n(0)=o$ and $\sigma_n(T_n)=x_n$. By Ascoli-Arzelà theorem, $\sigma_n$ (up to subsequences) converges locally uniformly to a geodesic ray $\sigma:[0,+\infty)\longrightarrow\Omega$, and with the same argument used in Corollary \ref{limgeo} 
$$\lim_{t\rightarrow+\infty}\sigma(t)=\lim_{n\rightarrow\infty}\sigma_n(T_n)=\xi,$$
so $\Phi([\sigma])=\xi$, then $\Phi$ is surjective.
\end{proof}

\section{Applications and examples}

In this last section we provide some applications.

We start with the problem of the extension to the boundary of the biholomorphisms: using Theorem \ref{mainth} and \ref{extth} we obtain 

\begin{corollary}
Let $\Omega_1,\Omega_2\subset\C^2$ be a bounded smooth finite type domain and let $f:\Omega_1\longrightarrow\Omega_2$ be a biholomorphic map, then $f$ extends to a unique homeomorphism between $\overline{\Omega}_1$ and $\overline{\Omega}_2$.
\end{corollary}

The smooth extension of biholomorphisms between pseudoconvex finite type domains in $\C^2$ was already proved by Bell in \cite{Bell} using the results of Kohn \cite{Kohn} about the global regularity estimates of $\bar{\partial}$-Neumann operator.

More interesting is the study the biholomorphism between finite type domains and convex domains (we include also unbounded convex domains). 

We recall that a convex domain $\Omega$ is $\C$-proper if it does not contain any complex affine lines (in \cite{BS} it is proved that this is equivalent to the Kobayashi completeness of $\Omega$). For convex domains we have a natural definition of compactification, the \textit{end compactification}, that we denote with $\overline{\Omega}^*$. The definition is the following (see, for example, \cite{BG1, BGZ} for  details): if $\Omega$ is bounded, then $\overline{\Omega}^*=\overline{\Omega}$. If $\Omega$ is unbounded, then there exists $R_0>0$ such that for each $R>R_0$ the set $\Omega\cap B(0,R)^c$ has one or two connected components and the number of components does not depend on $R$ (we call $\Gamma$ the set of connected components of $\Omega\cap B(0,R_0)^c$).  We define $\overline{\Omega}^*:=\overline{\Omega}\cup\Gamma$. For the topology, we say $\xi_n\in\overline{\Omega}^*\rightarrow\xi\in\overline{\Omega}^*$ is the usual Euclidean topology if $\xi\in\overline{\Omega}$, instead if $\xi\in\Gamma$ we have that $\xi_n\in \overline{\xi}\cup\{\xi\}$ eventually. 

The interesting fact is that the end compactification (that can be defined in every convex set) is the same of Gromov compactification if $(\Omega,d^K_\Omega)$ is Gromov hyperbolic. 

\begin{theorem}\cite{BGZ}\label{convexgromov} Let $\Omega\subset\C^d$ be a $\C$-proper convex domain such that $(\Omega,d^K_\Omega)$ is Gromov hyperbolic. Then $\overline{\Omega}^G$ and $\overline{\Omega}^*$ are homeomorphic.
\end{theorem}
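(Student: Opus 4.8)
The plan is to mirror Part~2 of the proof of Theorem~\ref{mainth}, replacing the inputs that are special to smooth finite type domains in $\C^2$ by their counterparts for $\C$-proper convex domains. Concretely, I would build a map $\Phi\colon\overline{\Omega}^G\to\overline{\Omega}^*$ that is the identity on $\Omega$ and, on the Gromov boundary, sends $[\sigma]$ (with $\sigma\colon[0,+\infty)\to\Omega$ a geodesic ray from a fixed base point $o$) to $\lim_{t\to+\infty}\sigma(t)$, the limit taken in $\overline{\Omega}^*$: a point of $\partial\Omega$ if $\sigma$ stays bounded, and an element of $\Gamma$ if $\sigma$ escapes to infinity. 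Since $\Omega$ is $\C$-proper the Kobayashi distance is complete, hence, being a length metric on a locally compact space, $(\Omega,d^K_\Omega)$ is proper and geodesic; therefore $\overline{\Omega}^G$ is compact and Hausdorff, and $\overline{\Omega}^*$ is compact and Hausdorff by construction (a sequence in $\overline{\Omega}$ either stays bounded and clusters in $\overline{\Omega}$, or escapes and converges to an end). So it is enough to show that $\Phi$ is a continuous bijection.

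\textbf{Well-definedness.} Completeness forces a geodesic ray to leave every compact subset of $\Omega$, so its cluster set lies in $\partial\Omega\cup\{\infty\}$. If $\sigma$ is bounded, a convexity estimate for the Kobayashi metric near $\partial\Omega$ (of the kind underlying \cite{Zim1, BGZ}) collapses the cluster set to a single point of $\partial\Omega$; if $\|\sigma(t)\|\to\infty$, then since $\Omega\cap B(0,R)^c$ has a fixed finite number of connected components for $R$ large, connectedness of $\sigma$ traps it in one of them and the associated end is the limit; oscillation between the two regimes is excluded by applying the same dichotomy along subsequences. If $\sigma_1\sim\sigma_2$ then $(\sigma_1(t)|\sigma_2(t))_o\to+\infty$, and the convex analogue of Corollary~\ref{gbgen} --- divergence of the Gromov product forces convergence to the same point of $\overline{\Omega}^*$, established in \cite{BGZ} via the scaling method for convex domains --- yields $\Phi([\sigma_1])=\Phi([\sigma_2])$.

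\textbf{Bijectivity and continuity.} These are then obtained as in Part~2 of the proof of Theorem~\ref{mainth}. For surjectivity, given $\xi\in\partial\Omega$ or $\xi\in\Gamma$, pick $x_n\to\xi$ in $\overline{\Omega}^*$, join $o$ to $x_n$ by geodesic segments, and extract a locally uniform limit geodesic ray $\sigma$ (Ascoli--Arzel\`a plus properness), checking $\Phi([\sigma])=\xi$. For injectivity, if $\Phi([\sigma_1])=\Phi([\sigma_2])=\xi$ with $[\sigma_1]\ne[\sigma_2]$, join these two Gromov boundary points by a geodesic line $\gamma\colon\R\to\Omega$; then $\gamma(-t)$ and $\gamma(t)$ stay at bounded distance from $\sigma_1(t)$ and $\sigma_2(t)$ respectively, so $\lim_{t\to-\infty}\gamma(t)=\lim_{t\to+\infty}\gamma(t)=\xi$ in $\overline{\Omega}^*$, contradicting the convex analogue of Proposition~\ref{behgeogen} (no Kobayashi geodesic line in a $\C$-proper convex domain can have coinciding ends). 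Continuity at a boundary point: if $\xi_n\to\xi$ in $\overline{\Omega}^G$, realize the $\xi_n$ by geodesics from $o$, take a subsequential locally uniform limit, and conclude $\Phi(\xi_n)\to\Phi(\xi)$ in $\overline{\Omega}^*$; continuity at interior points is immediate since $d^K_\Omega$ induces the Euclidean topology on $\Omega$. A continuous bijection between compact Hausdorff spaces is a homeomorphism.

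\textbf{Main obstacle.} The real work is in the ingredients that are cheap in the smooth finite type $\C^2$ setting but must be re-proved for arbitrary $\C$-proper convex domains: (i) convergence of Kobayashi geodesic rays in $\overline{\Omega}^*$, including the bounded-versus-escaping dichotomy and the matching of the Gromov topology with the topology of $\overline{\Omega}^*$ near the ends $\Gamma$; (ii) the visibility statement that a divergent Gromov product forces a common limit; and (iii) the absence of geodesic lines with coinciding ends. All three rely on the fine behaviour of the Kobayashi metric near $\partial\Omega$ via affine slices and Lempert's theory together with the convex scaling method, and constitute the technical core of \cite{BGZ}.
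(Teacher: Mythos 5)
This statement is not proved in the paper at all: it is imported verbatim from \cite{BGZ} and used as a black box in the proof of Theorem \ref{Thm:extension}. So there is no internal proof to compare your attempt against; the only fair benchmark is whether your argument would stand on its own.

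As it stands, it does not. Your outline of the map $\Phi$ and the ``continuous bijection between compact Hausdorff spaces'' endgame is the correct architecture, and it does mirror Part 2 of the proof of Theorem \ref{mainth}. But every substantive step is deferred: (i) that a Kobayashi geodesic ray in a $\C$-proper convex Gromov hyperbolic domain converges to a single point of $\overline{\Omega}^*$ (rather than merely clustering on $\partial\Omega\cup\Gamma$), (ii) that divergence of the Gromov product forces a common limit in $\overline{\Omega}^*$, and (iii) that no geodesic line has coinciding ends --- you explicitly attribute all three to \cite{BGZ}. These three facts \emph{are} the theorem; once they are granted, the remaining bookkeeping is routine. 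In the finite type $\C^2$ setting of this paper the analogous facts are earned through Catlin's estimates, the scaling method (Theorem \ref{scalingth}, Proposition \ref{visibscal}) and the visibility result of \cite{BZ} (Theorem \ref{BhaZim}); none of that machinery transfers to a general $\C$-proper convex domain, where one must instead use Lempert theory, affine slicing and the convex scaling method. So your proposal is a correct reduction of the theorem to its own technical core, not a proof of it. One smaller point worth flagging: your ``bounded-versus-escaping dichotomy'' for a ray needs an argument that a single ray cannot have both finite and infinite cluster points, and for that you would again need a quantitative visibility-type estimate, not just connectedness of the image.
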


Finally we can enunciate the following 

\begin{theorem}\label{Thm:extension}
Let $\Omega_1\subset\C^2$ be a bounded smooth finite type domain and $\Omega_2\subset\C^2$ be a convex domain. Let $f:\Omega_1\longrightarrow\Omega_2$ be a biholomorphic map, then $f$ extends to a homeomorphism between $\overline{\Omega}_1$ and $\overline{\Omega}_2^*$.
\end{theorem}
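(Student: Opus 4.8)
The plan is to combine the previously established results into a short argument. First I would invoke Theorem~\ref{mainth}: since $\Omega_1$ is a bounded smooth finite type domain, $(\Omega_1, d^K_{\Omega_1})$ is Gromov hyperbolic and the identity extends to a homeomorphism $\overline{\Omega}_1^G \longrightarrow \overline{\Omega}_1$. Next, since $f$ is a biholomorphism, Proposition~\ref{Kobdecr} tells us that $f$ is an isometry between $(\Omega_1, d^K_{\Omega_1})$ and $(\Omega_2, d^K_{\Omega_2})$; in particular $(\Omega_2, d^K_{\Omega_2})$ is Gromov hyperbolic as well, being isometric to a Gromov hyperbolic space. This also forces $\Omega_2$ to be complete Kobayashi hyperbolic (isometries preserve completeness and the Kobayashi distance is complete on $\Omega_1$ since it is bounded and the identity extends continuously to the Euclidean boundary), hence $\C$-proper by the result of \cite{BS} cited in the text.

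Then I would apply Theorem~\ref{extth} to the isometry (in particular a $(1,0)$ quasi-isometry) $f: \Omega_1 \longrightarrow \Omega_2$: it extends to a homeomorphism $f^*: \overline{\Omega}_1^G \longrightarrow \overline{\Omega}_2^G$ of Gromov compactifications. Now chain the homeomorphisms: $\overline{\Omega}_1 \cong \overline{\Omega}_1^G$ by Theorem~\ref{mainth}, then $\overline{\Omega}_1^G \cong \overline{\Omega}_2^G$ by $f^*$, and finally $\overline{\Omega}_2^G \cong \overline{\Omega}_2^*$ by Theorem~\ref{convexgromov}, which applies precisely because $\Omega_2$ is $\C$-proper convex and Gromov hyperbolic. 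Composing these three homeomorphisms yields a homeomorphism $\overline{\Omega}_1 \longrightarrow \overline{\Omega}_2^*$.

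The last point to check is that this composite extension genuinely \emph{extends} $f$, i.e. restricts to $f$ on $\Omega_1$. This is immediate from the construction: the homeomorphism $\overline{\Omega}_1^G \to \overline{\Omega}_1$ of Theorem~\ref{mainth} is the identity on $\Omega_1$, the homeomorphism $\overline{\Omega}_2^G \to \overline{\Omega}_2^*$ of Theorem~\ref{convexgromov} is likewise the identity on $\Omega_2$ (both compactifications induce the Euclidean topology on the interior), and $f^*$ restricts to $f$ on $\Omega_1$ by Theorem~\ref{extth}. So the composite restricts to $f$, as desired.

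The only mild obstacle is the bookkeeping around completeness and $\C$-properness of $\Omega_2$: one must argue that Kobayashi completeness transfers along the biholomorphism $f$ and then cite \cite{BS} to get $\C$-properness, since Theorem~\ref{convexgromov} is stated for $\C$-proper convex domains. This is routine but should be spelled out. Everything else is a direct concatenation of Theorems~\ref{mainth}, \ref{extth}, and \ref{convexgromov} together with Proposition~\ref{Kobdecr}.
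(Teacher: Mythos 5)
Your argument is correct and follows essentially the same route as the paper's own (much terser) proof: transfer Gromov hyperbolicity and Kobayashi hyperbolicity/$\C$-properness to $\Omega_2$ via the biholomorphism, identify $\overline{\Omega}_1^G\cong\overline{\Omega}_1$ by Theorem~\ref{mainth} and $\overline{\Omega}_2^G\cong\overline{\Omega}_2^*$ by Theorem~\ref{convexgromov}, and conclude with Theorem~\ref{extth}. Your extra care about why $\Omega_2$ is $\C$-proper and why the composite actually restricts to $f$ on $\Omega_1$ is a welcome elaboration of details the paper leaves implicit.
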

\proof
Since $\Omega_1$ is Kobayashi hyperbolic and Gromov hyperbolic and $f$ is an isometry for Kobayashi, then also $\Omega_2$ is Kobayashi hyperbolic (and so it is $\C$-proper) and Gromov hyperbolic. Now from Theorem \ref{convexgromov} $\overline{\Omega}^G$ and $\overline{\Omega}^*$ are homeomorphic. We conclude using Theorem \ref{extth}.
\endproof

An interesting application of Gromov hyperbolicity is in complex dynamics. Karlsson \cite{Kar} proved that in  Gromov hyperbolic spaces there exists a general Denjoy-Wolff Theorem for 1-Lipschitz maps. Another topic is the study of the dynamics of commuting holomorphic self-mappings. In \cite{Behan} Behan proved that two commuting holomorphic self-maps of the unit disc with no inner fixed points either have the same Denjoy-Wolff point or they are both hyperbolic automorphisms. Bracci \cite{Br1, Br2} extended such a result to commuting holomorphic self-maps of the ball and of convex domains. 
In \cite{BGZ} the authors proved a general version in Gromov hyperbolic spaces and apply it to get Behan's result to strongly pseudoconvex domains and Gromov hyperbolic convex domains. Now we give a similar result for bounded pseudoconvex finite type domains in $\C^2$. We recall that a complex geodesic is a holomorphic function $\phi:\D\twoheadrightarrow\Delta\subseteq\Omega$ such that
$d^K_\Omega(\phi(z),\phi(w))=\omega(z,w)$ for each $z,w\in\D$ where $\omega$ is the Poincarè distance on $\D$ (that means that $\phi$ is an isometric embedding). A holomorphic retraction is a holomorphic function $\rho:\Omega\twoheadrightarrow M\subseteq\Omega$ with $\rho^2=\rho$ (or equivalently, $\rho|_M=\id_M$). It is possible to prove that $M$ is always a complex sub-manifold of $\C^d$.

\begin{remark}
Let $\Omega\subset\C^d$ be a domain and $\rho:\Omega\twoheadrightarrow M\subseteq\Omega$ a retraction. Then using Proposition \ref{Kobdecr}, for each $z,w\in M$
$$d^K_M(z,w)\geq d^K_\Omega(z,w)\geq d^K_M(\rho(z),\rho(w))=d^K_M(z,w)$$
that means that $\rho$ is an isometry.
\end{remark}

\begin{theorem}\label{Thm:Behan}
Let $\Omega\subset\C^2$ be a bounded smooth pseudoconvex finite type domain, and let $f,g$ be commuting holomorphic maps from $\Omega$ to $\Omega$. Suppose that there exist $\xi\neq\eta$ in $\partial\Omega$ such that
$$f^n\rightarrow\xi, \ \ \ g^n\rightarrow\eta.$$ Then there is a complex geodesic $\Delta$ of $\Omega$, which is a retract of $\Omega$, such that $\xi,\eta\in\partial\Delta$ and $f|_\Delta,g|_\Delta$ are hyperbolic automorphism of $\Delta$.
\end{theorem}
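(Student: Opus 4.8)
The plan is to reduce the statement to the abstract Denjoy--Wolff theory for commuting 1-Lipschitz self-maps of Gromov hyperbolic spaces developed in \cite{BGZ}, using Theorem~\ref{mainth} to supply the needed geometry. First I would recall that, by Theorem~\ref{mainth}, $(\Omega, d^K_\Omega)$ is a proper geodesic Gromov hyperbolic space and that the identity extends to a homeomorphism $\overline{\Omega}^G \longrightarrow \overline{\Omega}$; in particular the Gromov boundary $\partial^G\Omega$ is canonically identified with $\partial\Omega$. Since $f,g$ are holomorphic self-maps, Proposition~\ref{Kobdecr} shows they are $1$-Lipschitz for $d^K_\Omega$. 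The hypotheses $f^n \to \xi$ and $g^n \to \eta$ with $\xi \neq \eta$ say precisely that, in the abstract language of \cite{BGZ}, $f$ and $g$ are of ``hyperbolic type'' with distinct Denjoy--Wolff points on $\partial^G\Omega$. The abstract Behan-type theorem from \cite{BGZ} then applies to the commuting pair $(f,g)$ and produces a geodesic line $\gamma:\R\longrightarrow\Omega$ joining $\xi$ and $\eta$ (i.e. $\gamma(-\infty)=\eta$, $\gamma(+\infty)=\xi$ in $\overline{\Omega}^G \cong \overline{\Omega}$) which is invariant, up to bounded distance, under both $f$ and $g$, and on which $f,g$ act as ``hyperbolic translations''.

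The next step is to upgrade this geodesic line to a genuine complex-analytic object. The key input is Bracci's theory of complex geodesics and holomorphic retractions: in a complete Kobayashi hyperbolic (taut) domain, a geodesic line of the Kobayashi distance whose endpoints are distinct is contained in the image of a complex geodesic $\phi:\D \to \Delta \subseteq \Omega$, and $\Delta$ is the image of a holomorphic retraction $\rho:\Omega \twoheadrightarrow \Delta$. Here one uses that $\Omega$ is bounded and smooth, hence taut, so that the required existence results for complex geodesics and retractions hold. Thus I obtain a complex geodesic $\Delta$, a retract of $\Omega$, with $\gamma(\R)\subseteq\Delta$ and therefore $\xi,\eta\in\partial\Delta$ (here one uses the homeomorphism $\overline{\Omega}^G\cong\overline\Omega$ to see that the boundary endpoints of the geodesic line really are the prescribed boundary points $\xi,\eta$).

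It then remains to see that $f$ and $g$ restrict to hyperbolic automorphisms of $\Delta$. Here I would use commutativity together with the retraction $\rho$: the map $\rho\circ f\circ \iota_\Delta : \Delta \to \Delta$ (where $\iota_\Delta$ is the inclusion) is a holomorphic self-map of $\Delta\cong\D$, and one checks, using the fact that $f$ translates the geodesic $\gamma$ toward $\xi$ while fixing the ``axis'' $\Delta$ setwise up to the retraction, that this self-map has its Denjoy--Wolff point at $\xi\in\partial\Delta$ and is a parabolic-or-hyperbolic self-map of the disc; the commuting relation $fg=gf$ together with $g$ pushing toward a \emph{different} boundary point $\eta$ forces $f|_\Delta$ to actually be a hyperbolic automorphism (and symmetrically for $g$). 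Concretely one transfers everything through $\phi:\D\xrightarrow{\sim}\Delta$ and invokes Behan's original dichotomy \cite{Behan} on $\D$ for the commuting pair $(\phi^{-1}\rho f\phi, \phi^{-1}\rho g\phi)$, whose Denjoy--Wolff points are the distinct boundary points $\phi^{-1}(\xi)\neq\phi^{-1}(\eta)$, so neither can have an interior fixed point and the only surviving alternative in Behan's theorem is that both are hyperbolic automorphisms of $\D$. Pulling back, $f|_\Delta$ and $g|_\Delta$ are hyperbolic automorphisms of $\Delta$.

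The main obstacle I expect is the passage from the metric ``invariant geodesic line'' given by the abstract theory to the holomorphic retract $\Delta$: one must make sure that the geodesic line produced by the Gromov-hyperbolic Denjoy--Wolff machinery is a Kobayashi geodesic in the strong sense needed to invoke the complex-geodesic/retraction existence theorems, and that its endpoints in the Gromov boundary match $\xi$ and $\eta$ under the identification of Theorem~\ref{mainth} --- this is where completeness of the Kobayashi distance, tautness of $\Omega$, and the homeomorphism $\overline{\Omega}^G\cong\overline\Omega$ all have to be combined carefully. Once $\Delta$ and $\rho$ are in hand, the identification of $f|_\Delta,g|_\Delta$ as hyperbolic automorphisms is essentially Behan's one-dimensional result applied through $\phi$, and is routine.
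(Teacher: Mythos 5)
There is a genuine gap at the central step of your argument: the passage from the metric geodesic line $\gamma$ to a complex geodesic $\Delta$ that is a holomorphic retract of $\Omega$. The existence result you invoke (``a Kobayashi geodesic line with distinct endpoints is contained in the image of a complex geodesic, which is the image of a holomorphic retraction'') is a Lempert-type statement that is known for convex domains but is \emph{not} available for general bounded smooth pseudoconvex finite type domains in $\C^2$; in a non-convex domain there is no general theorem producing complex geodesics through prescribed boundary points, nor guaranteeing that a complex geodesic is a retract. Since the whole point of Theorem~\ref{Thm:Behan} is to leave the convex world, this step cannot be taken for granted, and your proof does not close.

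The paper takes a different route that avoids this issue. It applies Theorem 1.9 of \cite{BGZ} (the holomorphic, not purely metric, version of the commuting-maps machinery) together with Theorem~\ref{mainth} to obtain directly a holomorphic retraction $\rho:\Omega\twoheadrightarrow M$ with $f|_M,g|_M\in\mathrm{Aut}(M)$, where $M$ is a complex submanifold of positive dimension. The real work is then to show $M$ can be taken to be a disc: if $\dim M=2$ then $f,g\in\mathrm{Aut}(\Omega)$, the Bedford--Pinchuk theorem identifies $\Omega$ with a Thullen domain, and the complex geodesic and linear retraction are exhibited explicitly; if $\dim M=1$ one must rule out that $M$ is an annulus, which the paper does by showing $M$ is simply connected (pushing a non-contractible loop near $\xi$ by $f^{n_0}$ into a simply connected boundary neighborhood and using injectivity of $\pi_1(M)\to\pi_1(\Omega)$ for a retract), and then invoking uniformization. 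Your proposal addresses neither the two-dimensional alternative nor the topology of a one-dimensional retract; both are essential once the unjustified complex-geodesic existence step is removed. The final identification of $f|_\Delta,g|_\Delta$ as hyperbolic automorphisms via Behan's dichotomy is fine, but it is the easy part.
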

\proof
Using Theorem 1.9 of \cite{BGZ} and Theorem \ref{mainth} we have that there exists a retraction $\rho:\Omega\twoheadrightarrow M\subseteq\Omega$ such that $f|_M,g|_M\in$Aut$(M)$. First of all, notice that $M$ is not a point, since $f$ and $g$ have no fixed points in $\Omega$. We analyse two different cases, according to the dimension of $M$
\begin{itemize}
\item $dimM=2$: then $M=\Omega$ and $f,g\in$Aut$(\Omega)$. By Bedford-Pinchuck Theorem \cite{BP2} $\Omega$ is biholomorphic to a Thullen domain $T:=\{(z,w)\in\C^2:\re[w]+|z|^{2p}<0 \}$. The automorphisms group of $T$ is well-know (see for instance \cite{automodel}) then with a simple computation it is possible to find an automorphism that conjugate $f$ and $g$ respectively in
$$\tilde{f}(z,w)=(\lambda z,\lambda^{2p} w) \ \ \mbox{and} \ \ \tilde{g}(z,w)=(\mu^{-1} z,\mu^{-2p} w)$$
with $\lambda,\mu\in(0,1)$. Now the complex geodesic sought is $\phi:\D\longrightarrow\Delta\subset T$ given by $$\phi(\zeta)=\left(0,\frac{\zeta+1}{\zeta-1}\right),$$ which is the retract of the (linear) retraction $\pi:T\twoheadrightarrow\Delta\subset T$, $\pi(z,w)=(0,w)$.
\item $dimM=1$: we want to prove that $M$ is simply connected, in this way the statement follows from the Uniformization Theorem. Since $\Omega$ is smooth, we can find a neighborhood $U$ of $\xi$ such that $U\cap\Omega$ is simply connected. Suppose that $M$ is not simply connected, then there exists a closed path $\gamma$ such that is not homotopic to 0 in $M$. Since $\gamma$ is compact and $f^n\rightarrow\xi$, there exist $n_0$ such that $\tilde{\gamma}:=f^{n_0}\circ\gamma$ is in $U\cap M$ and it is not homotopic to 0 in $M$ (because $f$ is an automorphism). Now $\rho$ is a retract, then $i^*:\pi_1(M)\longrightarrow\pi_1(\Omega)$ is injective and so $\tilde{\gamma}$ is not homotopic to 0 in $\Omega$, that implies that it is not homotopic to 0 also in $U\cap\Omega$ but this is a contradiction because $U\cap\Omega$ is simply connected.
\end{itemize}
\endproof

We conclude with an example of a theorem that cannot be derived from the results known so far.

\begin{example}
Let
$$D:=\left\{(z,w)\in\C^2: \re[w]+\frac{|w|^2}{5}+|zw|^2+|z|^8+\frac{15}{7}|z|^2\re[z^6]+10|z|^{10}<0\right\}.$$
It is a bounded pseudoconvex finite type domain in $\C^2$, so by Theorem \ref{mainth} $(D,d_D^K)$ is Gromov hyperbolic. Moreover $D$ is not convex and Calamai in \cite{Cal} proved that there does not exist a neighbourhood of the origin $V$ such that $V\cap D$ is biholomorphic to a convex domain, so $D$ is not locally convexifiable in the sense of \cite{Zim1}.
\end{example}

\end{document}